\def\Aut{\operatorname{Aut}}
\def\Out{\operatorname{Out}}
\def\End{\operatorname{End}}
\def\Ad{\operatorname{Ad}}
\def\max{\operatorname{max}}
\def\N{\mathbb{N}}
\newcommand{\IC}[0]{\mathbb{C}}
 \newcommand{\IN}[0]{\mathbb{N}}
 \newcommand{\IR}[0]{\mathbb{R}}
 \newcommand{\IT}[0]{\mathbb{T}}
 \newcommand{\IZ}[0]{\mathbb{Z}}
\newcommand{\CA}[0]{\mathcal{A}} \newcommand{\CB}[0]{\mathcal{B}}
 \renewcommand{\CD}[0]{\mathcal{D}}
 \newcommand{\CF}[0]{\mathcal{F}}
 \newcommand{\CH}[0]{\mathcal{H}}
\newcommand{\CK}[0]{\mathcal{K}} 
\newcommand{\CO}[0]{\mathcal{O}} \newcommand{\CP}[0]{\mathcal{P}}
\newcommand{\CQ}[0]{\mathcal{Q}} 
\newcommand{\CS}[0]{\mathcal{S}} 
\newcommand{\CU}[0]{\mathcal{U}} 
\newcommand{\CW}[0]{\mathcal{W}}
\newtheorem{thm}{Theorem}[section]
\newtheorem{corollary}[thm]{Corollary}
\newtheorem{lemma}[thm]{Lemma}
\newtheorem{proposition}[thm]{Proposition}
\theoremstyle{definition}
\theoremstyle{remark}
\newtheorem{remark}[thm]{Remark}
\newtheorem{example}[thm]{Example}
\numberwithin{equation}{section}
\begin{document}

\title[Normalizers and permutative endomorphisms of $\CQ_2$]{Normalizers and permutative endomorphisms\\ of the $2$-adic ring $C^*$-algebra}

\author[V.~Aiello]{Valeriano Aiello}
\address{
Section de Math\'ematiques
Universit\'e de Gen\`eve
2-4 rue du Li\`evre, Case Postale 64, 1211 Gen\`eve 4, Suisse
}
\email{valerianoaiello@gmail.com}

\author[R.~Conti]{Roberto Conti}
\address{Dipartimento di Scienze di Base e Applicate per l'Ingegneria \\ Sapienza Universit\`{a} di Roma \\ Via A. Scarpa 16, 00161 Roma, Italy}
\email{roberto.conti@sbai.uniroma1.it}

\author[S.~Rossi]{Stefano Rossi}
\address{Dipartimento di Matematica \\ Universit\`{a} di Roma Tor Vergata \\ Via della Ricerca Scientifica, 1, 00133 Roma \\ Italy}
\email{rossis@mat.uniroma2.it}

\thanks{Valeriano Aiello acknowledges   support of the Swiss National Science Foundation. Roberto Conti acknowledges partial support by Sapienza Universit\`a di Roma.
Stefano Rossi is supported by European Research Council Advanced Grant 669240 QUEST}

\begin{abstract}
A complete description is provided for the unitary normalizer of the diagonal Cartan subalgebra $\CD_2$ in the $2$-adic
ring $C^*$-algebra $\CQ_2$, which generalizes and unifies analogous results for Cuntz and Bunce-Deddens algebras. 
Furthermore, the inclusion $\CO_2\subset\CQ_2$ is  proved not to be regular. Finally, countably many novel permutative endomorphisms
of $\CQ_2$ are exhibited with prescribed images of the generator $U$.

\end{abstract}

\date{\today}
\maketitle
\tableofcontents

\section{Introduction}

The study of automorphisms (endomorphisms) of $C^*$-algebras does not seem to have received
as much attention as its classical counterpart. Arguably, the groups (semigroups) made up of
automorphisms (endomorphisms) of a non-commutative $C^*$-algebra  are seldom regarded as inviting objects to deal with  
in that they are not only  difficult
to describe in concrete terms but they also lack many of those properties a group is generally supposed to possess.
For instance, these groups are hardly ever locally compact, apart from those coming from finite-dimensional
$C^*$-algebras. A few exceptions, however, do exist. A case in point is given by the Cuntz algebras $\CO_n$: their
endomorphisms and automorphisms have in fact been studied rather intensively despite the difficulties alluded to above, perhaps because of their  interplay with algebraic quantum field theory, whose superselection
structure can be phrased in terms of suitable equivalence classes of endomorphisms. 
Indeed, as late as over forty years after their introduction in \cite{Cun}, the Cuntz algebras  still attract much attention.
Unlike many other  $C^*$-algebras, this is particularly true of their endomorphisms and automorphisms  \cite{CuntzAut, CRS, CoSz11, CHS-crelle, CHS}. 
Another reason is they  
display a remarkably rich variety of phenomena  
which range from  the study of general structure properties of $C^*$-algebras to
dynamical systems, actions of (possibly quantum) groups and  subfactors. 
Moreover,  these endomomorphisms quite often lead to non-trivial computations of important invariants, such as the Jones index or Voiculescu's topological entropy \cite{Izumi, Longo, Jones, CP, Choda, Skalski}.   
Nevertheless, far less is known about the general structure of other $C^*$-algebras which might happen to be 
somewhat related to the Cuntz algebra,
and their endomorphism semigroup.
For instance,  not too long ago Cuntz and others introduced a vast class of $C^*$-algebras naturally associated with algebraic structures of various kind, which seem to indicate that intriguing 
connections are very likely to be found between operator algebra theory on the one hand and other seemingly far different areas, most notably number theory, on the other (see e.g. \cite{CELY17} for a wide overview).
Having that in mind, in our recent works \cite{ACR, ACR2, ACR3} we initiated a painstaking analysis  of the so-called dyadic $C^*$-algebra $\CQ_2$, which was first studied systematically by
Larsen and Li in \cite{LarsenLi}.
This $C^*$-algebra contains in a  canonical fashion both the Cuntz algebra $\CO_2$ and the Bunce-Deddens algebra of type $2^\infty$ (the latter as the fixed-point subalgebra  $\CQ_2^\IT$ of the gauge action).
 In particular, much of our attention so far has been lavished on its endomorphisms and automorphisms, of which very little was known before.  
To frame the scope and the reach of our analysis, however, it might be worth  stressing that $\CQ_2$ is not at all an isolated case. On the contrary,
it is perhaps best presented as a noticeable example of a broad class of $C^*$-algebras arising from algebraic dynamical systems, including all $\CQ_n$ with $n \geq 2$, which have been addressed in \cite{ACRS} in much greater generality than was initially done in \cite{ACR}.

\medskip
 
Our final goal was at that time and still is to arrive at a thorough description of the group $\Out(\CQ_2)$ of the outer automorphisms
of $\CQ_2$ not least because only rarely has such an ambitious task been accomplished.  
Even so,  the undertaking is not necessarily bound to fail. Indeed, unlike the Cuntz algebra, the $2$-adic ring $C^*$-algebra features a decidedly more rigid structure 
in that the Cuntz isometries are now intertwined, which in fact seems to prevent many cases from occurring. 
Many obstacles are easily found on the way, though, and this might depend on the various facets of $\Aut(\CQ_2)$ and ${\rm End}(\CQ_2)$ entailed
by the intricacies of the ladder of inclusions
$$
\begin{array}{ccccc}
\CQ_2 & \supset & \CQ_2^{\mathbb T}=C^*(\CD_2,U) & \supset & C^*(U) \\
\cup && \cup && \\
\CO_2 & \supset & \CF_2 & \supset & \CD_2 \\
\cup && && \\
C^*(S_2) &&&&
\end{array}
$$
which account for the deep interplay between $\CQ_2$ and the Cuntz algebra $\CO_2$.
Still far from a complete answer, we have nonetheless obtained partial yet motivating results. 
Among them, and without any pretense of exhaustiveness, we  showed that  $\End_{C^*(U)}(\CQ_2) = \Aut_{C^*(U)}(\CQ_2)\simeq C({\mathbb T},{\mathbb T})$ and $\Aut_{\CD_2}(\CQ_2)$ are both maximally Abelian in $\Aut(\CQ_2)$. As far as $\Out(\CQ_2)$ is concerned, at present all we know is it is uncountable and not Abelian.
Moreover, any extendible localized diagonal automorphism of $\CO_2$ is the product of a gauge and a localized diagonal inner automorphism.
Finally, we also spotted an interesting rigidity phenomenon relative to the inclusion $\CO_2\subset\CQ_2$, which forbids any non-trivial endomorphism of
$\CQ_2$ to restrict to $\CO_2$ trivially. Nevertheless, the inclusion, albeit given quite explicitly, is not easily handled with standard techniques, and the principal reason is there is no way to
see $\CO_2$ as a fixed-point subalgebra of $\CQ_2$, for no conditional expectation exists from the larger onto the smaller algebra.

\medskip
As for the present work, we start by going back to   the analysis of the automorphisms of $\CQ_2$  mapping $C^*(U)$ onto itself, for a number of problems had been left open in
\cite{ACR}. More precisely, our attention is here turned to those automorphisms which at the level of $C^*(U)$ simply act as a rotation of the generator. 
Remarkably, the only allowed values of the angle turn out to be all roots of order any power of $2$. 
We then move on to provide a complete description of those inner automorphisms of $\CQ_2$ leaving the diagonal $\CD_2$ globally invariant. This should be regarded as the main result of the present work insofar as it not only fully settles the problem but also establishes an elegant synthesis of the corresponding results for  the Bunce-Deddens and Cuntz algebras \cite{PUT,Pow}. 
Interestingly enough, the associated non-trivial part $\CW$ of the normalizing group  is a specific extension of the well-known Thompson group $V$, which have surfaced before in the work of Nekrashevych  \cite{NEK}.
Moreover, we draw a number of consequences of these results on the structure of other normalizers. In particular, the inclusion $\CO_2 \subset \CQ_2$ is proved not to be
regular, even though  to date we  do not know whether the unitary normalizer of $\CO_2$ in $\CQ_2$ actually reduces to $\CU(\CO_2)$, as we would be inclined to believe.
At any rate, we do show that the only unitaries in the Bunce-Deddens algebra normalizing $\CO_2$ are those in the canonical UHF subalgebra $\CF_2$.
 Finally, in the last part, which is more combinatorial in character,  we  discuss permutative endomorphisms of $\CO_2$, namely those obtained by extending the permutative endomorphisms of $\CO_2$. Extendability is no trivial matter here. Roughly speaking, if one is given an endomorphism of the Cuntz algebra, the odds are
it will fail to extend to an endomorphism of the whole $\CQ_2$. To take but two significant examples of the hurdles one might encounter in trying to
exhibit an extension  of any such endomorphism, it is worth recalling that less than half of the permutative endomorphisms of $\CO_2$ at level two actually extend \cite{ACR3},
and among the so-called Bogolubov automorphisms of $\CO_2$ only the gauge automorphisms, the flip-flop and their compositions extend \cite{ACR}.
That being the case, one might be led to expect extendible endomorphisms to be increasingly sparse as the level is raised. 
Quite the opposite, we show that the number of permutative endomorphisms of $\CQ_2$ does grow extremely quickly with the level, which
came as good news to us. Moreover, we are now in a position to  enrich the list of the endomorphisms
of $\CQ_2$, which admittedly had  remained rather limited since we started working on the problem in \cite{ACR}.

\section{Preliminaries and notation}

This rather quick section provides the reader with the basic notation and definitions needed to make the paper as self-consistent and readable as possible.
The main object of the present study,  the $2$-adic ring $C^*$-algebra $\CQ_2$ is by definition  the universal $C^*$-algebra generated by a unitary $U$
and a (proper) isometry $S_2$ such that $S_2U=U^2S_2$ and $S_2S_2^*+US_2S_2^*U^*=1$.
Several characterizations of this $C^*$-algebra are actually known, see \cite{LarsenLi} for more detail.
Among the many interesting properties enjoyed by $\CQ_2$, it is worth recalling it is a simple and purely infinite $C^*$-algebra.
As is known, the Cuntz algebra $\CO_2$ is the universal $C^*$-algebra generated by two isometries $X_1, X_2$ such that
$X_1X_1^*+X_2X_2^*=1$.  
It is clear that $\CO_2$ embeds into $\CQ_2$ through the injective $^*$-homomorphism that sends $X_1$ to $US_2$ and $X_2$ to $S_2$.
A distinguished representation of the $2$-adic ring $C^*$-algebra, which will actually play a major role in this work, is the so-called canonical representation $\rho_c$ 
of $\CQ_2$, which 
is a (faithful) irreducible representation acting on the Hilbert space
$\ell_2(\IZ)$, with canonical orthonormal basis $\{e_k:k\in\IZ\}$,
 by $\rho_c(U)e_k\doteq e_{k+1}$   and $\rho_c(S_2)e_k\doteq e_{2k}$, $k\in\IZ$. 
In order to ease the notation, we will often drop the symbol $\rho_c$ and identify $\CQ_2$ with its image. 
The Cuntz algebra $\CO_2$ is acted upon by the one-dimensional torus $\IT$ through the well-known gauge automorphisms $\alpha_\theta$, with $\theta\in\IR$.
These are given by $\alpha_\theta (S_i)=e^{i\theta}S_i$  for $i=1,2$.    The corresponding invariant subalgebra is denoted by $\CF_2\subset \CO_2$, which will often be referred to as the gauge invariant subalgebra of 
$\CO_2$. It is worth mentioning that $\CF_2$ is isomorphic with the unique UHF algebra of type $2^{\infty}$.
 Now the gauge automorphisms extend to automorphisms $\widetilde{\alpha_\theta}$ of the whole $\CQ_2$ by setting $\widetilde{\alpha_\theta}(U)=U$, which
allows us to consider the gauge invariant subalgebra $\CQ_2^\mathbb{T}$ of $\CQ_2$ as well. Among other things,  $\CQ_2^\IT$ is known to be a Bunce-Deddens algebra. 
It is not difficult to see that $\CQ_2^{\mathbb{T}}$ can also be described 
as the $C^*$-subalgebra of $\CQ_2$ generated from $\CD_2$ and $U$, where $\CD_2\subset\CF_2$ is the diagonal subalgebra, namely
the subalgebra generated by the diagonal projections $P_\alpha\doteq S_\alpha S_\alpha^*$, where for any multi-index $\alpha=(\alpha_1,\alpha_2,\ldots,\alpha_k)\in W_2\doteq \bigcup_{n\geq 0} \{1,2\}^n$ the isometry $S_\alpha$ is the product $S_{\alpha_1}S_{\alpha_2}\ldots S_{\alpha_k}$. 
Sometimes it will be more convenient to identify $\CD_2$ with the $C^*$-algebra of continuous functions on its Gelfand spectrum, which is known to be the Cantor set
$K=\{1,2\}^\IN$. As a matter of fact, $\CD_2$ is a Cartan subalgebra both of $\CO_2$ and $\CQ_2$.
Since the multi-index notation  will be adopted extensively throughout the paper, we take this opportunity to  recall that $|\alpha|$ denotes the length of the multi-index $\alpha$.

The canonical endomorphism of $\CO_2$  
is defined on each element $x\in\CO_2$ as $\varphi(x)=S_1xS_1^*+S_2xS_2^*$. 
It is rather obvious that it extends to $\CQ_2$. 
We also point out the intertwining rules $S_i x = \varphi(x)S_i$ for every  $x\in {\mathcal Q}_2$, with $i=1,2$, which will come in useful in the sequel.  
Lastly, we recall that thanks to the Cuntz-Takesaki correspondence, every endomorphism of $\CO_2$ is uniquely determined by a unitary  in $\CO_2$. To be precise, given $u\in\CU(\CO_2)$ there exists an
endomorphism $\lambda_u$ defined as $\lambda_u(S_i)\doteq uS_i$ for $i=1, 2$ and conversely every endomorphism has this form.
Aware that this overview can by no means be regarded as a comprehensive introduction, we refer the interested reader to \cite{ACR, LarsenLi}, and the references therein, for a fuller coverage of the material instead.

\section{Automorphisms preserving $C^*(U)$}

The present brief section aims to refine some results concerning the $C^*$-subalgebra of $\CQ_2$ generated by $U$.
To begin with, in \cite{ACR} the commutative subalgebra $C^*(U)$ was proved to be maximal Abelian in $\CQ_2$, and it was also seen to be the image of a unique conditional
expectation from $\CQ_2$. However, our subalgebra fails to be a Cartan subalgebra, as shown by the following result.

\begin{proposition}
The normalizer  
$$
\{	v\in\CQ_2 \; | \; v^*v, vv^*\in\CP(\CQ_2),\; vC^*(U)v^*\subset C^*(U), \; v^*C^*(U)v\subset C^*(U)		\}
$$
coincides with the unitary normalizer $N_{C^*(U)}(\CQ_2)$ and  
 sits in the Bunce-Deddens subalgebra $\CQ_2^\IT$.
In particular, the subalgebra $C^*(U)$ is not  Cartan in $\CQ_2$. 
\end{proposition}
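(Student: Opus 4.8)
The plan is to show two things: first, that the ``partial'' normalizer described by the partial isometries $v$ actually consists of unitaries, so that it equals the classical unitary normalizer $N_{C^*(U)}(\CQ_2)$; and second, that this unitary normalizer is contained in the Bunce--Deddens subalgebra $\CQ_2^\IT$. The non-Cartan conclusion is then immediate: since $C^*(U)$ is already maximal Abelian in $\CQ_2$ and $C^*(U) \subsetneq \CQ_2^\IT \subsetneq \CQ_2$, the normalizer cannot generate $\CQ_2$, so $C^*(U)$ fails one of the defining properties of a Cartan subalgebra.

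For the first point, suppose $v$ is a partial isometry with $v^*v = p$ and $vv^* = q$ projections in $\CQ_2$, and $v C^*(U) v^* \subset C^*(U)$, $v^* C^*(U) v \subset C^*(U)$. Applying the first inclusion to $1 \in C^*(U)$ gives $q = vv^* \in C^*(U)$, and symmetrically $p = v^*v \in C^*(U)$. But $C^*(U) \cong C(\IT)$ is a connected commutative $C^*$-algebra, hence has no nontrivial projections; thus $p = q = 1$ (the case $p=q=0$ being the trivial element $v=0$, which we discard or which lies vacuously in every subalgebra), and $v$ is a unitary. So the partial-isometry normalizer collapses to $N_{C^*(U)}(\CQ_2)$.

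For the second point — which is the main work — I would use the unique conditional expectation $E \colon \CQ_2 \to \CQ_2^\IT$ afforded by the gauge action, together with the fact (recalled in the excerpt) that $C^*(U)$ is the image of a unique conditional expectation $F \colon \CQ_2 \to C^*(U)$; note $C^*(U) \subset \CQ_2^\IT$, so $F = F \circ E$ by uniqueness. Take $v \in N_{C^*(U)}(\CQ_2)$ and decompose $v = E(v) + (v - E(v))$ into its gauge-invariant part and the part with no zero gauge-weight. The strategy is to test $v f v^* \in C^*(U)$ against the gauge action: for $f \in C^*(U)$, one has $\widetilde{\alpha_\theta}(v f v^*) = \widetilde{\alpha_\theta}(v) f \widetilde{\alpha_\theta}(v)^*$ since $f$ is gauge-fixed, and this must again land in $C^*(U) \subset \CQ_2^\IT$; extracting Fourier coefficients in $\theta$ and using that $C^*(U) v^* C^*(U) v \subset C^*(U)$ forces the cross terms between distinct gauge-weights of $v$ to vanish against all of $C^*(U)$. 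Since $C^*(U)$ is maximal Abelian, an element commuting with or being annihilated by all of $C^*(U)$ in the appropriate sense is itself constrained to lie in $C^*(U)$; pushing this through should show every nonzero gauge-homogeneous component of $v$ of nonzero weight would have to be a unitary multiple within $C^*(U)$ of a power of a generating isometry, which is impossible for a unitary unless only the weight-zero component survives. Hence $v = E(v) \in \CQ_2^\IT$.

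The hard part will be the last step: ruling out nonzero off-diagonal gauge components of $v$. The subtlety is that $\CQ_2$, unlike $\CO_2$, has the intertwining relation $S_2 U = U^2 S_2$, so gauge-homogeneous elements of positive weight are not simply left multiples of $S_\alpha$'s by $\CF_2$-elements but involve $U$ in a twisted way; one must use the $\CQ_2$-specific structure of $\CQ_2^\IT$ as a Bunce--Deddens algebra and the precise form of $F$ to control them. I expect one can sidestep the full structure theory by exploiting that $v^* U^n v \in C^*(U)$ for all $n$ while simultaneously $v$ is unitary, applying $F$ and the KMS-type / trace properties of the gauge action to conclude that $v$ commutes with the expectation $E$, equivalently that $E(v)$ is itself a unitary, whence $v = E(v)$. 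The bookkeeping of Fourier components under $\widetilde{\alpha_\theta}$ and the invocation of maximal abelianness of $C^*(U)$ at the right moment are where care is needed; the rest is routine.
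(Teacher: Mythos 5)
Your first step (connectedness of the spectrum of $C^*(U)$ kills the nontrivial projections, so the partial-isometry normalizer collapses to the unitary one) and your final step (maximal abelianness plus containment in $\CQ_2^\IT\subsetneq\CQ_2$ rules out the Cartan property) are both correct and are exactly what the paper does. The problem is the middle, which is the actual content of the proposition: your argument that $N_{C^*(U)}(\CQ_2)\subset\CQ_2^\IT$ is a plan with the decisive step left open. Decomposing $v=\sum_n v_n$ into gauge-Fourier components and asserting that the relations $\sum_{n-m=d}v_nfv_m^*=0$ (for $d\neq 0$, $f\in C^*(U)$) ``force the cross terms to vanish'' is not a proof: these are quadratic relations among the components, nothing immediately isolates a single weight, and your fallback (``I expect one can sidestep\dots'', ``should show\dots'') concedes the gap. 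Likewise the claim that a nonzero homogeneous component of nonzero weight ``would have to be a unitary multiple within $C^*(U)$ of a power of a generating isometry'' is unjustified.

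The paper closes this gap by working with a cocycle rather than with the Fourier components of $v$. Since $vUv^*=g(U)$ is gauge-fixed, applying $\alpha_z$ gives $\alpha_z(v)U\alpha_z(v)^*=vUv^*$, so $v^*\alpha_z(v)$ commutes with $U$ and, by maximal abelianness of $C^*(U)$, equals $h_z(U)$ for some $h_z\in C(\IT)$ --- a step your own setup almost reaches but then abandons. The group law $h_{z_1z_2}=h_{z_1}h_{z_2}$ shows that for each fixed $w\in\IT$ the map $z\mapsto h_z(w)$ is a character $z^{k(w)}$; homotopy invariance of the winding number forces $k(w)$ to be a constant $k$, so $\alpha_z(v)=z^kv$, i.e.\ $v$ is gauge-homogeneous of weight $k$. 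Finally $k=0$ is obtained from the KMS state for the gauge action (the argument of Proposition~\ref{normalizer}: $\omega(v\tilde\alpha_t(v^*))=e^{-ikt}$ must also equal $e^{k\beta}e^{-ikt}$ with $\beta=1$); alternatively one can note that a homogeneous unitary of weight $k\neq 0$ would have range projection $vS_2^{|k|}S_2^{*|k|}v^*\neq 1$. If you want to salvage your approach, the repair is precisely to replace the componentwise analysis of $v$ by this cocycle argument; as written, the proposal does not establish the inclusion in $\CQ_2^\IT$.
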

\begin{proof}

Let $v$ be a partial isometry in the normalizer. The projections $v^*v$ and $vv^*$ are both in $C^*(U)$.
Since $\IT$ is connected, they can only be both equal to $0$ or $1$. Therefore, without loss of generality we may suppose that 
$v$  is a unitary in $N_{C^*(U)}(\CQ_2)$. By definition we have $vUv^*=g(U)$ for some $g\in C(\IT)$. If we apply the gauge automorphism $\alpha_z$ to the previous equality we get $\alpha_z(v)U\alpha_z(v)^*=g(U)$, which leads to $v^*\alpha_z(v)U\alpha_z(v)^*v=U$. The maximality of $C^*(U)$ in $\CQ_2$ implies that $\alpha_z(v)=vh_z(U)$ for some function $h_z\in C(\IT)$. We observe that  $\alpha_{z_1z_2}(v)=vh_{z_1z_2}(U)=\alpha_{z_1}(\alpha_{z_2}(v))=\alpha_{z_1}(vh_{z_2}(U))=\alpha_{z_1}(v)\alpha_{z_1}(h_{z_2}(U))=vh_{z_1}(U)h_{z_2}(U)$ and thus $h_{z_1z_2}(U)=h_{z_1}(U)h_{z_2}(U)$. Set $f_w(z):=h_z(w)$ where $w\in\IT$. The function $f_w(\cdot)$ is a continuous function on $\IT$ (as a function in the variable $z$)  which is also a character. Indeed, it holds
$f_w(z_1z_2)=h_{z_1z_2}(w)=h_{z_1}(w)h_{z_2}(w)=f_w(z_1)f_w(z_2)$. It follows that $f_w(z)=z^{k(w)}$ for some $k(w)\in\IZ$. 
Now $k(w)$ is the winding number of the curve $f_w(z)$ (where $w$ is fixed). All the curves   $\{f_w(\cdot)\}_{w\in\IT}$ are homotopic. Since the winding number is homotopy invariant, 
we see that $k(w)$ 
has to be constant, say $k$.
Now,   a straightforward argument 
   shows that $k$ has to be equal to $0$, cf. Proposition \ref{normalizer}.
The claim about the Cartan subalgebra is obvious.
\end{proof}

In \cite{ACR} it was shown that corresponding to any root of unity $z$ of order a power of $2$ there existed an inner automorphism $\Ad(U_z)$, implemented
by a unitary $U_z\in\CD_2$, such that
$\Ad(U_z)(U)=zU$. Our guess was that automorphisms of this sort should cease to exist as soon as $z$ was no longer such a root. 
What the next result does is bridge this gap and show that no automorphism of $\CQ_2$ can send $U$ to $zU$ unless $z$ is a root of unity of order a power of two.

\begin{proposition}
Let $\alpha$ be an automorphisms of $\CQ_2$ such that $\alpha(U)=zU$ for some $z\in\IT$, then $z^{2^n}=1$ for some $n$.
\end{proposition}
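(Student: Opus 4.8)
The plan is to transfer $\alpha$ to the Bunce--Deddens subalgebra $\CQ_2^{\IT}$, recognize it there as implemented by a unitary of $\CQ_2^{\IT}$, and then decode the relation $vUv^{*}=zU$ in the odometer picture of $\CQ_2^{\IT}$, where $2$-adic continuity forces the order of $z$ to be a power of $2$.

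First I would record that, since $\alpha(U)=zU$ generates $C^{*}(U)$, one has $\alpha(C^{*}(U))=C^{*}(U)$, with $\alpha$ acting on $C^{*}(U)\cong C(\IT)$ as the rotation $f(U)\mapsto f(zU)$; in particular $\alpha^{-1}(U)=z^{-1}U$. Hence $\alpha\,\widetilde{\alpha_\theta}\,\alpha^{-1}$ fixes $U$, so it lies in $\Aut_{C^{*}(U)}(\CQ_2)\simeq C(\IT,\IT)$, and $\theta\mapsto\alpha\,\widetilde{\alpha_\theta}\,\alpha^{-1}$ is a continuous homomorphism $\IT\to C(\IT,\IT)$. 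Evaluating at points of $\IT$ produces continuous homomorphisms $\IT\to\IT$, necessarily $\theta\mapsto e^{ik(w)\theta}$, and $k(w)$ is constant by connectedness; thus our homomorphism takes values among the constant functions, i.e. $\alpha\,\widetilde{\alpha_\theta}\,\alpha^{-1}=\widetilde{\alpha_{k\theta}}$ for a fixed $k\in\IZ$. Running the same argument for $\alpha^{-1}$ forces $k^{2}=1$, so $\alpha$ either commutes with the gauge action or intertwines it with its inverse; either way $\alpha$ leaves $\CQ_2^{\IT}$ globally invariant and restricts there to an automorphism $\beta$ of the Bunce--Deddens algebra with $\beta(U)=zU$.

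The main step --- and the one I expect to be the real obstacle --- is to realize $\beta$ as $\Ad v$ for a unitary $v\in\CQ_2^{\IT}$. Here I would exploit that $\alpha$ respects (up to the sign $k$) the gauge grading of $\CQ_2$ over $\CQ_2^{\IT}$: with $X_1=US_2$, $X_2=S_2$ and $Y_i=\alpha(X_i)$, the family $\{Y_i\}$ is again a Cuntz pair with $\sum_iY_iY_i^{*}=1$, and the identities $\alpha(X_ic)=Y_i\beta(c)$ for $c\in\CQ_2^{\IT}$, together with the unitary $w=\sum_iY_iX_i^{*}$, package all of $\alpha$; adjusting $w$ by a suitable element of $\CQ_2^{\IT}$ should produce $v\in\CU(\CQ_2^{\IT})$ with $vUv^{*}=zU$ and $vS_2v^{*}=\alpha(S_2)$, so that $\alpha=\Ad v$. (Equivalently, once $\alpha$ is known to be inner its implementing unitary normalizes $C^{*}(U)$, hence lies in $\CQ_2^{\IT}$ by the first Proposition of this section.) I single this out because automorphisms fixing $C^{*}(U)$ only setwise need be neither inner nor $\CD_2$-preserving, so genuine input on the structure of $\Aut(\CQ_2)$ from \cite{ACR} is needed; a variant sidestepping the reduction would instead analyse the projections $Q_n=\alpha(S_2^{n}S_2^{*n})$ directly, via $\sum_{j=0}^{2^{n}-1}U^{j}Q_nU^{-j}=1$ and $\alpha(S_2^{n})U\alpha(S_2^{n})^{*}=z^{2^{n}-1}U^{2^{n}}Q_n$.

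For the conclusion I would use the presentation $\CQ_2^{\IT}\cong C(K)\rtimes_{\mathrm{od}}\IZ$, in which $C(K)\cong\CD_2$ (with $K=\{1,2\}^{\IN}$ the Cantor spectrum of $\CD_2$, a compact group isomorphic to $\mathbb{Z}_2$) and $U$ is the canonical unitary implementing the odometer $\mathrm{od}$, translation by a topological generator $g\in K$. Expanding $v$ in its Fourier modes $a_n\in\CD_2$ over this crossed product, the identity $vU=zUv$ forces $a_n\circ\mathrm{od}=z\,a_n$, i.e. $a_n(x+g)=z\,a_n(x)$, for every $n$ and $x$. Pick $n$ with $a_n\neq0$; then $|a_n|$ is $\mathrm{od}$-invariant and continuous, hence constant by minimality of $\mathrm{od}$, so we may normalize $a_n$ to be unimodular with $a_n(0)=1$. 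Iteration gives $a_n(kg)=z^{k}$ on the dense subgroup $\IZ g\subset K$, so $a_n$ is multiplicative there, and by density and continuity $a_n$ is a continuous character $\chi$ of $K$ with $\chi(g)=z$. Since $\widehat{K}\cong\widehat{\mathbb{Z}_2}$ is the Pr\"ufer $2$-group, $\chi(K)$ is a finite cyclic group of order a power of $2$; therefore $z=\chi(g)$ satisfies $z^{2^{n_0}}=1$ for some $n_0$, as claimed.
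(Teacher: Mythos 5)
Your step 1 (showing that $\alpha$ normalizes the gauge action up to a sign and hence preserves $\CQ_2^{\IT}$) and your final step (the Fourier-mode/character computation in $C(K)\rtimes_{\mathrm{od}}\IZ$) are both sound, and the last one is a nice way to see where the powers of $2$ come from. But the middle step, which you yourself flag as ``the real obstacle'', is a genuine gap and not a repairable technicality: you need a unitary $v\in\CQ_2$ with $vUv^{*}=zU$ before the Fourier expansion can even be written down, and such a $v$ exists precisely when $\alpha$ (or its restriction $\beta$ to $\CQ_2^{\IT}$) is inner. Nothing in the hypotheses forces this --- $\Out(\CQ_2)$ is uncountable --- and the unitary $w=\sum_i Y_iX_i^{*}$ you construct only expresses $\alpha\upharpoonright_{\CO_2}$ as the Cuntz--Takesaki endomorphism $\lambda_w$, which is essentially never $\Ad(w)$. ``Adjusting $w$ by a suitable element of $\CQ_2^{\IT}$'' has no justification, and the parenthetical remark beginning ``once $\alpha$ is known to be inner'' assumes exactly what is missing. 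The sketched variant via the projections $Q_n=\alpha(S_2^{n}S_2^{*n})$ is closer in spirit to a workable argument but is not carried out.

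The paper avoids innerness entirely by a spectral argument in the interval picture on $L^2([0,1])$: the vectors $P_\alpha 1$ form a complete system of eigenvectors of the operators $U^{2^{|\alpha|}}$ with eigenvalue $1$, while the relation $\alpha(U^2)\alpha(S_2)=\alpha(S_2)\alpha(U)$ shows that the unit vector $\alpha(S_2)1$ is an eigenvector of $U^{2}$ with eigenvalue $\bar z$; if $z$ is not a root of unity of order a power of $2$, this vector is orthogonal to every $P_\alpha 1$, a contradiction. Note that this uses only that $\alpha(S_2)$ is an isometry satisfying the defining relations with $zU$ --- no structural information about $\Aut(\CQ_2)$ is needed, which is exactly what lets the argument go through where the innerness route stalls.
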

\begin{proof}
We already know that if $z^{2^n}=1$ for some $n$, then there exists an automorphism mapping $U$ to $zU$. Indeed, one can consider   $\Ad(U_z)$, where $U_z$ is the unitary in the diagonal subalgebra $\CD_2$ defined in \cite[Section 6.3]{ACR} by the formula $U_z e_k\doteq z^k e_k$ for all $k\in \IZ$. Suppose that $z$ has order different from $2^n$ for all $n$. 
From now on $\CQ_2$ will be understood in the interval picture, we refer to \cite{ACRS} for the definition of such representation. 
Denote by $1\in L^2([0,1])$ the unit constant function. 
It is easy to see that $1$ is an eigenvector for $U$.
Now for any 
 multi-index $\alpha$, the vector $v_\alpha := P_\alpha 1$ is an eigenvector for $U^{2^{|\alpha |}}$. Indeed, we have $U^{2^{|\alpha |}}(P_\alpha 1)=P_\alpha U^{2^{|\alpha |}}1=P_\alpha 1$. The family of vectors $\{v_\alpha\}_{\alpha\in W_2}$ is 
 a complete system
  for $L^2([0,1])$. 
If there existed an automorphism mapping $U$ to $zU$, then $U^2$ would have $\bar z$ as an eigenvalue. Indeed, let $\alpha$ be such an automorphism. Then
$\alpha(U^2)\alpha(S_2)1=z^2U^2\alpha(S_2)1=\alpha(S_2)\alpha(U)1=z\alpha(S_2)1$ which shows that $\alpha(S_2)1$ is an eigenvector for $U^2$ with eigenvalue $\bar z$. As $z$ has order different from $2^n$ for all $n\in \IN$, the vector $\alpha(S_2)1$ would be orthogonal to all the $v_\alpha$, which is absurd. 
\end{proof}

\section{The normalizer of $\CD_2$ in $\CQ_2$}

The normalizer of the diagonal $\CD_2$ has been completely described in both the Cuntz algebra $\CO_2$ and the Bunce-Deddens algebra $\CQ_2^{\IT}$.
More precisely, $N_{\CD_2}(\CQ_2^\IT) = \CU(\CD_2) \cdot
\big\{ u\in \CQ_2^\IT \; | \; u=\sum_{i\in F} p_i U^i, \; F \subset \IZ, \, |F|<\infty , \ p_i^2=p_i^*=p_i \in\CD_2, \forall i \in F , \sum_{i \in F} p_i = 1 =  \sum_{i\in F} \Ad(U^{-i})(p_i) \big\}$, 
see \cite[Lemma 5.1]{PUT}, while $N_{\CD_2}(\CO_2) = \CU(\CD_2) \cdot \CS_2$ \cite{Pow},  where $\CS_2$ is the group of unitaries in $\CO_2$ that can be written as a finite sum of words in the generators $S_i$, $i=1,2$ and their adjoints, cf. \cite{CoSz11}.
In this section we generalize both these results.

The next proposition provides a good many examples of unitary in $\CQ_2$ normalizing the diagonal subalgebra $\CD_2$.
Before stating it, a couple of points are needed. First,  any monomial $S_\alpha S_\beta^*U^k$, $\alpha,\beta\in W_2$ and $k\in\mathbb{Z}$, can be rewritten
as $S_\alpha U^l S_\gamma^*$, for suitable $l\in\mathbb{Z}$ and $\gamma\in W_2$ depending on $k$ and $\beta$.  Moreover, the latter representation is more convenient  not only
because it is symmetric under taking the adjoint but because it is also canonical insofar as it is unique.  In other terms, the equality $S_\alpha U^k S_\beta^*=S_{\alpha'}U^{k'}S_{\beta'}^*$ is possible only if $\alpha=\alpha'$, $\beta=\beta'$ and $k=k'$.

\begin{proposition}\label{unitaryproj}
Given a finite family of triples $(\alpha_i,\beta_i, k_i)\in W_2\times W_2\times \mathbb{Z}$, with $i=1,2,\ldots, N$, define
$u\doteq \sum_{i=1}^N S_{\alpha_i}U^{k_i}S_{\beta_i}^*\in\CQ_2$.
The element $u$ is unitary if and only if $\sum_{i=1}^N S_{\alpha_i}S_{\alpha_i}^*=\sum_{i =1}^N S_{\beta_i}S_{\beta_i}^*=1$. In that case, the unitary $u$ also
belongs to $N_{\CD_2}(\CQ_2)$.
\end{proposition}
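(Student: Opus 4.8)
The plan is to prove first the unitarity criterion and then the normalization property. Throughout I write $P_\gamma=S_\gamma S_\gamma^{*}$, as in the text.

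\emph{Sufficiency of the range conditions.} Suppose $\sum_{i}P_{\alpha_i}=\sum_{i}P_{\beta_i}=1$. The first identity is a relation among commuting projections of $\CD_2$, so multiplying it by $P_{\alpha_j}$ gives $\sum_{i\neq j}P_{\alpha_i}P_{\alpha_j}=0$, a vanishing sum of positive elements; hence $P_{\alpha_i}P_{\alpha_j}=0$ whenever $i\neq j$, i.e.\ the $\alpha_i$ are pairwise distinct and pairwise incomparable as multi-indices, and therefore $S_{\alpha_i}^{*}S_{\alpha_j}=\delta_{ij}1$; similarly $S_{\beta_i}^{*}S_{\beta_j}=\delta_{ij}1$. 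Expanding $u^{*}u=\sum_{i,j}S_{\beta_i}U^{-k_i}(S_{\alpha_i}^{*}S_{\alpha_j})U^{k_j}S_{\beta_j}^{*}$ then collapses it to $\sum_{i}P_{\beta_i}=1$, and symmetrically $uu^{*}=\sum_{i}P_{\alpha_i}=1$; hence $u$ is unitary.

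\emph{Necessity.} Conversely, suppose $u$ is unitary; I would pass to the canonical representation $\rho_c$ on $\ell_2(\IZ)$. On the canonical basis $\{e_k\}$, each of $\rho_c(S_{\alpha_i})$, $\rho_c(U^{k})$ and $\rho_c(S_{\beta_i}^{*})$ sends a basis vector to a basis vector or to $0$, and $\rho_c(S_{\alpha_i}U^{k_i}S_{\beta_i}^{*})e_q=0$ exactly when $\rho_c(P_{\beta_i})e_q=0$; hence $\rho_c(u)e_q$ is a sum, with multiplicities, of $n_q:=\#\{i:\rho_c(P_{\beta_i})e_q=e_q\}$ standard basis vectors, so $\|\rho_c(u)e_q\|^{2}\ge n_q$. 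Now $\|\rho_c(u)e_q\|^{2}=1$ while $n_q\ge 1$ (the $q$-th column cannot vanish, $\rho_c(u)$ being injective), so $n_q=1$ for every $q$; this means $\sum_i\rho_c(P_{\beta_i})=I$, whence $\sum_iP_{\beta_i}=1$ by faithfulness of $\rho_c$. Running the same argument on the unitary $\rho_c(u)^{*}=\rho_c(u^{*})$, whose monomials $S_{\beta_i}U^{-k_i}S_{\alpha_i}^{*}$ carry the $\alpha_i$ in the right-hand slot, gives $\sum_iP_{\alpha_i}=1$.

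\emph{Normalization of $\CD_2$.} Assume the range conditions; by the first step $\{\alpha_i\}$ and $\{\beta_i\}$ are finite families of pairwise incomparable multi-indices whose cylinder sets cover the Cantor set $K=\{1,2\}^{\IN}$. Since $\CD_2=\clsp\{P_\gamma:\gamma\in W_2\}$ and conjugation by $u$ is continuous, it is enough to show $uP_\gamma u^{*}\in\CD_2$ for every $\gamma\in W_2$; the corresponding statement for $u^{*}$ follows at once because $u^{*}=\sum_iS_{\beta_i}U^{-k_i}S_{\alpha_i}^{*}$ satisfies the same hypotheses with $\alpha$ and $\beta$ swapped, and the two inclusions together yield $u\CD_2 u^{*}=\CD_2$. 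I would write $uP_\gamma u^{*}=(uS_\gamma)(uS_\gamma)^{*}$, expand $uS_\gamma=\sum_iS_{\alpha_i}U^{k_i}(S_{\beta_i}^{*}S_\gamma)$, and split into two cases. If $\gamma$ extends some $\beta_{j_0}$ (necessarily unique, since the $\beta_i$ are pairwise incomparable), say $\gamma=\beta_{j_0}\gamma'$, all other terms vanish, and pushing the power of $U$ to the left through $S_\delta U^{k}=U^{k2^{|\delta|}}S_\delta$ gives $uS_\gamma=U^{m}S_{\alpha_{j_0}\gamma'}$ for some $m\in\IZ$, so that $uP_\gamma u^{*}=\Ad(U^{m})(P_{\alpha_{j_0}\gamma'})\in\CD_2$, because $\Ad(U)$ stabilizes $\CD_2$ — indeed $U\in N_{\CD_2}(\CQ_2^{\IT})$ by the displayed description of that normalizer. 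If instead $\gamma$ extends no $\beta_i$, then, the $\beta_i$-cylinders covering $K$, the set $F_\gamma=\{i:\beta_i\supsetneq\gamma\}$ is non-empty; writing $\beta_i=\gamma\beta_i'$ for $i\in F_\gamma$, the tails $\{\beta_i'\}_{i\in F_\gamma}$ are pairwise incomparable, hence $S_{\beta_i'}^{*}S_{\beta_j'}=\delta_{ij}1$, all cross terms disappear, and $uP_\gamma u^{*}=\sum_{i\in F_\gamma}P_{\alpha_i}\in\CD_2$.

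The one step that is not routine is the necessity direction: a direct expansion of $u^{*}u$ produces many cross terms whose cancellation is opaque at the algebraic level, and what makes the argument go through is that, read in $\rho_c$ against the canonical basis, $u$ has no cancellation whatsoever — so unitarity becomes the purely combinatorial assertion that each column of $\rho_c(u)$ is a single standard basis vector. The manipulations in the normalization part (the intertwining $S_\delta U^{k}=U^{k2^{|\delta|}}S_\delta$ and the invariance of $\CD_2$ under $\Ad(U)$) are straightforward given the facts already recorded in the paper.
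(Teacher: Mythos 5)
Your proof is correct and follows essentially the same route as the paper's: the range conditions force pairwise orthogonality of the cylinder projections (hence unitarity), necessity is read off in the canonical representation from the fact that each column of $u$ is a multiplicity-counted sum of basis vectors of norm one, and normalization comes from expanding $uP_\gamma u^*$, killing the cross terms, and using that $\Ad(U)$ preserves $\CD_2$. The only differences are organizational — you compute $u^*u$ and $uu^*$ algebraically rather than invoking sums of partial isometries, and you split the normalization step into two cases instead of assuming $|\gamma|$ large without loss of generality — but the substance is the same.
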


\begin{proof}
We begin by observing that each summand appearing in the sum that defines $u$ is a partial isometry. More precisely, any element
of the form $S_\alpha U^k S_\beta^*$ is a partial isometry whose initial and final projections are $S_\beta S_\beta^*$ and $S_\alpha S_\alpha^*$
respectively. Therefore, the condition  $\sum_{i=1}^N S_{\beta_i}S_{\beta_i}^*=1$ guarantees that $u$ is a full isometry, while the condition 
 $\sum_{i=1}^N S_{\alpha_i}S_{\alpha_i}^*=1$ says that $u$ is in addition a surjective isometry. This obviously proves the if part.
 Conversely, let us assume that $u$ is unitary. The first thing we need to prove is that the projections $S_{\beta_i} S_{\beta_i}^*$ are pairwise orthogonal.
 If we work in the canonical representation of $\CQ_2$, this amounts to showing that given $n\in\mathbb{Z}$ such that $S_{\beta_l}S_{\beta_l}^*e_n\neq 0$ and 
 $S_{\beta_m}S_{\beta_m}^*e_n\neq 0$ then $l=m$. But if this were not the case, we would find the absurd inequality
\begin{align*}
 1&=\|ue_n\|= \big\|\sum_{i=1}^N S_{\alpha_i}U^{k_i}S_{\beta_i}^* e_n\big\| \\
&=\big\| S_{\alpha_l}U^{k_l}S_{\beta_l}^*e_n+ S_{\alpha_m}U^{k_m}S_{\beta_m}^*e_n+ \sum_{i\neq l,m} S_{\alpha_i}U^{k_i}S_{\beta_i}^*e_n\big\|\geq \sqrt{2} 
 \end{align*}
 where the last inequality is due to the fact that every non-zero term is a basis vector.
 This clearly shows that $\sum_{i=1}^N S_{\beta_i}S_{\beta_i}^*\leq 1$. By applying the same argument to $u^*$ we see that the inequality 
 $\sum_{i=1}^N S_{\alpha_i}S_{\alpha_i}^*\leq 1$ holds as well. At this point, it is clear that both inequalities must actually be equalities, for otherwise
 $u$ could not be unitary.\\
 To conclude, we have to show that $u$ lies in the normalizer of $\CD_2$ in $\CQ_2$. To this aim, it is enough to 
 verify that both $uS_\gamma S_\gamma^*u^*$ and $u^*S_\gamma S_\gamma^*u$ are still in $\CD_2$ for every multi-index $\gamma$. We only deal
 with the first term, for the second is handled in the very same fashion. Now
 \begin{align*}
 uS_\gamma S_\gamma^*u^*&= \Big(\sum_{i=1}^N S_{\alpha_i}U^{k_i}S_{\beta_i}^*\Big)S_\gamma S_\gamma^*\Big(\sum_{j=1}^N S_{\alpha_j}U^{k_j}S_{\beta_j}^*\Big)^*\\
 &=\sum_{i,j=1}^N S_{\alpha_i} U^{k_i} \big(S_{\beta_i}^*S_\gamma S_\gamma^* S_{\beta_j}\big)U^{-k_j} S_{\alpha_j}^*
 \end{align*}
 
Now there is no lack of generality if we further assume that the length of $\gamma$ is greater of ${\rm max}\{|\alpha_i|, |\beta_i|:i=1,2,\ldots , N\}$.
In this case, the only way for a term of the form $S_{\beta_i}^*S_\gamma S_\gamma^* S_{\beta_j}$ not to be zero is $i=j$.  
In particular, in the above sum only one term survives, which means $uS_\gamma S_\gamma^*u^*= 
S_{\alpha{i_\gamma}}U^{k_{i_\gamma}} S_{\beta_{i_\gamma}}^*S_\gamma S_\gamma^*S_{\beta_{i_\gamma}}U^{-k_{i_\gamma}}S_{\alpha_{i_\gamma}}^*\in\CD_2$.
\end{proof}
It is worth stressing that the powers of $U$ occurring in the sums above can be chosen arbitrarily. 

\begin{remark}
All powers of $U$ can be recovered as particular instances of the above unitaries. More precisely, they correspond to the case $N=1$. 
\end{remark}

The unitaries yielded by the above proposition clearly form a group $\mathcal{W}$ that contains the Thompson group $\mathcal{S}_2\cong V$. 
It is worthwhile to observe that  a natural class of irreducible unitary representations of $\CW$ can be obtained
by restricting irreducible representations of $\CQ_2$ to it.  
cf. \cite{Haag}. Among these, permutative irreducible representations of $\CQ_2$ represent quite an interesting class of examples inasmuch as 
they have been thoroughly classified in \cite{ACR3}.
For the sake of completeness, a diagrammatic description of $\mathcal{W}$ will be outlined in the next section.\\    

There follows a series of technical results necessary to reach the main theorem of this section.

\begin{lemma}\label{orthonormal}
Given a multi-index $\beta\in W_2$, let $A_\beta\subset\mathbb{Z}$ be the set $\{k\in\mathbb{Z}: e_k=P_\beta e_k\}$.
For any finite set of distinct monomials of the form $S_{\alpha_i}U^iS_\beta^*\in \CB(\ell_2(\mathbb{Z}))$, with $i=1,2,\ldots , l$, there exists at least an 
$m\in A_\beta$ such that $\{S_{\alpha_i}U^iS_\beta^* e_m: i=1,2,\ldots , l\}$ is an orthonormal system. 
\end{lemma}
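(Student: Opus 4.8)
The plan is to carry out the whole argument inside the canonical representation on $\ell_2(\mathbb{Z})$, where each monomial $S_{\alpha_i}U^iS_\beta^*$ sends every basis vector either to $0$ or to another basis vector, so the statement becomes a matter of avoiding finitely many coincidences between arithmetic progressions. First I would fix the bookkeeping: with $n:=|\beta|$ there is a unique residue $r_\beta\in\{0,\dots,2^n-1\}$ such that $S_\beta e_t=e_{2^n t+r_\beta}$ for all $t\in\mathbb{Z}$, whence $A_\beta=\{2^n t+r_\beta:t\in\mathbb{Z}\}$ and $S_\beta^* e_{2^n t+r_\beta}=e_t$; likewise, writing $n_i:=|\alpha_i|$, there is $s_i\in\{0,\dots,2^{n_i}-1\}$ with $S_{\alpha_i}e_p=e_{2^{n_i}p+s_i}$ for all $p$.

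Next I would compute, for $m=2^n t+r_\beta\in A_\beta$,
\[
S_{\alpha_i}U^i S_\beta^* e_m \;=\; S_{\alpha_i}U^i e_t\;=\; S_{\alpha_i}e_{t+i}\;=\;e_{\,2^{n_i}(t+i)+s_i\,}\,,
\]
so that each such vector is a single canonical basis vector, hence of norm one, for every choice of $m\in A_\beta$. Consequently the family $\{S_{\alpha_i}U^iS_\beta^*e_m:i=1,\dots,l\}$ is an orthonormal system if and only if the integers $2^{n_i}(t+i)+s_i$, $i=1,\dots,l$, are pairwise distinct, and the lemma reduces to exhibiting a single $t\in\mathbb{Z}$ for which no two of them coincide.

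For that I would, for each pair $i\neq j$, analyse the ``bad'' set $B_{ij}:=\{t\in\mathbb{Z}:2^{n_i}(t+i)+s_i=2^{n_j}(t+j)+s_j\}$. The defining equation is affine in $t$ with leading coefficient $2^{n_i}-2^{n_j}$: when $n_i\neq n_j$ this coefficient is nonzero, so $B_{ij}$ consists of at most one point; when $n_i=n_j=:p$ the equation collapses to $2^p(j-i)=s_i-s_j$, which cannot hold since $|s_i-s_j|\le 2^p-1<2^p\le|2^p(j-i)|$ for $i\neq j$, so $B_{ij}=\varnothing$. Either way $B_{ij}$ is finite, hence $\bigcup_{1\le i<j\le l}B_{ij}$ is finite; picking any $t$ in its complement and setting $m:=2^n t+r_\beta\in A_\beta$ produces the desired orthonormal system.

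The main (indeed the only) delicate point will be the case $n_i=n_j$: one must rule out that $B_{ij}$ is all of $\mathbb{Z}$, which is precisely where the size bound $s_i,s_j<2^{n_i}$ on the ``addresses'' of the $\alpha_i$ is used, together with the fact that the monomials are distinct (note $\alpha_i=\alpha_j$ would force $n_i=n_j$ and $s_i=s_j$, so distinctness — here already guaranteed by the distinctness of the exponents $1,\dots,l$ — is exactly what excludes the degenerate case). Everything else is routine finite bookkeeping.
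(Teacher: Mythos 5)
Your proof is correct and follows essentially the same route as the paper: both arguments observe that on $A_\beta$ each monomial acts by an affine map of the index, so that any two distinct monomials can collide on at most one basis vector, leaving infinitely many good choices of $m$. The only difference is that you compute the affine maps $t\mapsto 2^{n_i}(t+i)+s_i$ explicitly and verify by hand that the degenerate ``always coincide'' case is impossible, a point the paper handles more abstractly by invoking the uniqueness of the canonical form $S_\alpha U^k S_\beta^*$ to conclude that distinct monomials give distinct affine functions.
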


\begin{proof}
Given any $i\in\{1,2,\ldots , l\}$, let $f_i: A_\beta\rightarrow \mathbb{Z}$ be the function such that 
$S_{\alpha_i}U^i S_\beta^* e_k = e_{f_i(k)}$ for every $k\in A_\beta$.
Each of this function is of the form $f_i(k)=a_ik+b_i$, $k\in A_\beta$, where $a_i, b_i$ are suitable integers.
Because the monomials $S_{\alpha_i}U^iS_\beta^*$ are distinct, the functions $f_i$ are all distinct as well, which means
the equation $f_i(k)=f_j(k)$, $k\in A_\beta$, can only have at most one solution for any pair $(i,j)$ with
$i\neq j$.
Let $C_{i,j}\subset\mathbb{Z}$ be the set defined as $\{k\in A_\beta: f_i(k)=f_j(k)\}$
The set $C\doteq\cup_{i\neq j} C_{i,j}$ is finite and its cardinality is clearly not greater than
$\frac{l(l-1)}{2}$. In particular, its complement $D$ in $A_\beta$ is not empty (actually it is infinite). The conclusion now follows by
noting that $D$ is nothing but the set $\{k\in A_\beta: f_1(k)\neq f_2(k)\neq\ldots\neq f_l(k)\}$.
\end{proof}

By a permutative unitary we mean any unitary operator acting on $\ell_2(\IZ)$ permuting the elements of the canonical basis.
\begin{proposition}
Let $V$ be a permutative unitary in $\CQ_2$ and let $0<\varepsilon<1$  
and $\gamma_1, \gamma_2,\ldots , \gamma_N\in\mathbb{C}$.
If 
$$
\|V- \big(\gamma_1S_{\alpha_1}U^{k_1}S_\beta^*+\ldots+ \gamma_l S_{\alpha_l}U^{k_l}S_\beta^* +\sum_{i=l+1}^N \gamma_i S_{\alpha_i}U^{k_i}S_{\beta_i}^* \big)\|<\varepsilon
$$
and $P_\beta\perp P_{\beta_i}$ for every $i=l+1,\ldots , N$, then there exists an $i_0\in\{1,2,\ldots , l\}$ such that
$|1-\gamma_{i_0}|^2+\sum_{i\in\{1,2,\ldots , l\}\setminus\{i_0\}} |\gamma_i|^2<\varepsilon^2$.

\end{proposition}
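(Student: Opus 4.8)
The plan is to exploit the combinatorial rigidity of permutative unitaries: since $V$ permutes the canonical basis, for each basis vector $e_m$ the image $Ve_m$ is again a single basis vector of norm one, and this clashes with having $Ve_m$ approximated by a $\mathbb{C}$-linear combination of several \emph{orthogonal} basis vectors unless one coefficient dominates. First I would apply Lemma \ref{orthonormal} to the finite family of distinct monomials $S_{\alpha_1}U^{k_1}S_\beta^*,\ldots, S_{\alpha_l}U^{k_l}S_\beta^*$ (all sharing the same $S_\beta^*$ on the right) to produce an index $m\in A_\beta$ for which $\{S_{\alpha_i}U^{k_i}S_\beta^* e_m : i=1,\ldots,l\}$ is an orthonormal system in $\ell_2(\mathbb{Z})$. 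I would also want $e_m$ to be killed by every $S_{\beta_i}^*$ with $i>l$; this is automatic from the hypothesis $P_\beta\perp P_{\beta_i}$, because $m\in A_\beta$ means $P_\beta e_m = e_m$, hence $P_{\beta_i} e_m = 0$ and therefore $S_{\beta_i}^* e_m = 0$ for $i=l+1,\ldots,N$.

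Next I would evaluate both operators in the displayed inequality on the unit vector $e_m$. On the one hand $\|Ve_m\|=1$ since $V$ is permutative. On the other hand, the tail terms $\gamma_i S_{\alpha_i}U^{k_i}S_{\beta_i}^* e_m$ vanish for $i>l$ by the previous paragraph, so
\[
\Big\| V e_m - \sum_{i=1}^l \gamma_i\, S_{\alpha_i}U^{k_i}S_\beta^* e_m \Big\| < \varepsilon .
\]
Write $w := Ve_m$, a unit basis vector, and $f_i := S_{\alpha_i}U^{k_i}S_\beta^* e_m$, so that $\{f_1,\ldots,f_l\}$ is orthonormal. Set $\gamma_{i_0}$ to be the coefficient of the unique $i_0$ (if any) with $f_{i_0}=w$; if $w$ is not among the $f_i$, any choice of $i_0$ works and the estimate below only improves. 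Expanding the squared norm using orthonormality of the $f_i$ and the fact that $\langle w, f_i\rangle = \delta_{i, i_0}$ gives
\[
\varepsilon^2 > \Big\| w - \sum_{i=1}^l \gamma_i f_i \Big\|^2 = |1-\gamma_{i_0}|^2 + \sum_{i\in\{1,\ldots,l\}\setminus\{i_0\}} |\gamma_i|^2 ,
\]
which is exactly the claimed inequality. (In the degenerate case where $w\notin\{f_1,\ldots,f_l\}$ one gets $1 + \sum_{i=1}^l|\gamma_i|^2 < \varepsilon^2$, contradicting $\varepsilon<1$, so in fact that case cannot occur; either way the conclusion holds.)

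The only genuinely delicate point is making sure Lemma \ref{orthonormal} applies verbatim, i.e.\ that the monomials $S_{\alpha_1}U^{k_1}S_\beta^*,\ldots,S_{\alpha_l}U^{k_l}S_\beta^*$ are pairwise distinct with a common right factor $S_\beta^*$ — this is part of the standing hypothesis — and that they are honestly represented as operators of the form $S_{\alpha_i}U^iS_\beta^*$ with integer exponents; the normalization remarks preceding Proposition \ref{unitaryproj} guarantee that $S_{\alpha_i}U^{k_i}S_\beta^* e_k = e_{a_i k + b_i}$ on $A_\beta$, which is precisely what the lemma needs. Everything else is the elementary Hilbert-space computation above. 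The orthogonality hypothesis $P_\beta\perp P_{\beta_i}$ is used solely to annihilate the tail on $e_m$; without it those terms would contaminate the estimate and the statement would be false.
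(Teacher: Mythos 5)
Your proof is correct and follows essentially the same route as the paper's: pick the basis vector supplied by Lemma \ref{orthonormal}, use $P_\beta\perp P_{\beta_i}$ to kill the tail terms on that vector, observe that $\varepsilon<1$ forces $Ve_m$ to coincide with exactly one of the orthonormal images, and expand the squared norm. Your write-up is somewhat more explicit about the inner-product bookkeeping and the impossibility of the degenerate case, but there is no substantive difference.
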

\begin{proof}
Pick a $k_0$ in $D\subset A_\beta$ where $\{S_{\alpha_i}U^{k_i}S_\beta^*e_{k_0}: I=1,2,\ldots , l\}$ is an orthonormal system.

\begin{align*}
&\|Ve_{k_0}- \big(\gamma_1S_{\alpha_1}U^{k_1}S_\beta^*+\ldots+ \gamma_l S_{\alpha_l}U^{k_l}S_\beta^* +\sum_{i=l+1}^N \gamma_i S_{\alpha_i}U^{k_i}S_{\beta_i}^* \big)e_{k_0}\|^2=\\ 
& \|Ve_{k_0}- \big(\gamma_1S_{\alpha_1}U^{k_1}S_\beta^*+\ldots+ \gamma_l S_{\alpha_l}U^{k_l}S_\beta^*)e_{k_0}\|^2<\varepsilon^2
\end{align*}
Since $\varepsilon<1$, the inequality is satisfied only if there exists (a unique) $i_0\in\{1,2,\ldots , l\}$ such that
$S_{\alpha_{i_0}}U^{k_{i_0}}S_\beta^*e_{k_0}=Ve_{k_0}$. But then 

$$
|1-\gamma_{i_0}|^2+\sum_{i\in\{1,2,\ldots  , l\}\setminus\{i_0\}} |\gamma_i|^2=\|Ve_{k_0}- \big(\gamma_1S_{\alpha_1}U^{k_1}S_\beta^*+\ldots+ \gamma_l S_{\alpha_l}U^{k_l}S_\beta^*)e_{k_0}\|^2<\varepsilon^2
$$
as maintained. 
\end{proof}

\begin{corollary}\label{straightmonomial}
With the same hypotheses as above, if $0<\varepsilon<\frac{1}{2}$ then the equality $S_{\alpha_{i_0}}U^{k_{i_0}}S_\beta^*e_k=Ve_k$ holds for every $k\in A_\beta$ apart from a finite set.
\end{corollary}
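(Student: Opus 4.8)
The plan is to argue entirely inside the canonical representation on $\ell_2(\IZ)$, using that a permutative unitary $V$ acts on the basis as a bijection $\pi$ of $\IZ$, so $Ve_k=e_{\pi(k)}$. First I would fix the exceptional set: in the notation of the proof of Lemma \ref{orthonormal}, for $k\in A_\beta$ one has $S_{\alpha_i}U^{k_i}S_\beta^*e_k=e_{f_i(k)}$ with $f_i$ affine, and the finite set $C=\bigcup_{i\neq j}C_{i,j}\subset A_\beta$ is precisely the set off which the values $f_1(k),\dots,f_l(k)$ are pairwise distinct. I would then prove that the asserted equality $S_{\alpha_{i_0}}U^{k_{i_0}}S_\beta^*e_k=Ve_k$ holds for every $k\in A_\beta\setminus C$, which suffices since $C$ is finite.

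Next I would compute the action of the approximating combination on a basis vector $e_k$ with $k\in A_\beta$. Since $P_\beta e_k=e_k$ and $P_\beta\perp P_{\beta_i}$ for $i=l+1,\dots,N$, each tail term satisfies $S_{\alpha_i}U^{k_i}S_{\beta_i}^*e_k=S_{\alpha_i}U^{k_i}S_{\beta_i}^*P_{\beta_i}e_k=0$; this is exactly where the orthogonality hypothesis is used. Hence the combination sends $e_k$ to $\sum_{i=1}^l\gamma_i e_{f_i(k)}$, and the norm hypothesis yields $\big\|e_{\pi(k)}-\sum_{i=1}^l\gamma_i e_{f_i(k)}\big\|<\varepsilon$. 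For $k\in A_\beta\setminus C$ the vectors $e_{f_1(k)},\dots,e_{f_l(k)}$ are orthonormal, so this squared distance has a closed form.

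The heart of the matter is then a short case analysis built on $\varepsilon<\tfrac12$. If $\pi(k)\notin\{f_1(k),\dots,f_l(k)\}$, the squared distance equals $1+\sum_{i}|\gamma_i|^2\geq 1>\varepsilon^2$, which is impossible; so $\pi(k)=f_j(k)$ for a unique $j=j(k)\in\{1,\dots,l\}$, and therefore $|1-\gamma_j|^2+\sum_{i\neq j}|\gamma_i|^2<\varepsilon^2$. To force $j=i_0$ I would invoke the conclusion of the preceding proposition: from $|1-\gamma_{i_0}|^2<\varepsilon^2<\tfrac14$ we get $|\gamma_{i_0}|>\tfrac12$, hence $|\gamma_{i_0}|^2>\varepsilon^2$; if $j\neq i_0$ this would give $\sum_{i\neq j}|\gamma_i|^2\geq|\gamma_{i_0}|^2>\varepsilon^2$, a contradiction. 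Thus $\pi(k)=f_{i_0}(k)$, i.e. $Ve_k=S_{\alpha_{i_0}}U^{k_{i_0}}S_\beta^*e_k$, for all $k\in A_\beta$ apart from the finite set $C$.

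I do not expect a genuine obstacle here; the only points requiring care are bookkeeping — checking that the tail monomials indexed by $l+1,\dots,N$ truly annihilate $e_k$, and that the exceptional set $C$ inherited from Lemma \ref{orthonormal} is finite and independent of $V$. The numerical threshold $\tfrac12$ is exactly what separates the ``correct'' coefficient $\gamma_{i_0}$ (forced close to $1$) from the remaining ones (forced small) in the final step.
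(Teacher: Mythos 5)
Your argument is correct and follows essentially the same route as the paper: restrict to the cofinite subset of $A_\beta$ where the $l$ monomials act injectively on $e_k$, note that at each such $k$ exactly one index must ``win'' (forcing its coefficient within $\varepsilon$ of $1$ and the others below $\varepsilon$ in modulus), and use $\varepsilon<\tfrac12$ to see that the winning index cannot change, since $|1-\gamma_{i_0}|<\tfrac12$ and $|\gamma_{i_0}|<\tfrac12$ are incompatible. Your write-up is in fact somewhat more careful than the paper's (which leaves the vanishing of the tail terms and the case $\pi(k)\notin\{f_1(k),\dots,f_l(k)\}$ implicit), but the underlying idea is identical.
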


\begin{proof}
Under the condition on $\varepsilon$ the equality is clearly satisfied for every $k\in D$, which by definition is the set 
$\{k\in A_\beta: S_{\alpha_1}U^{k_1}S_\beta^*e_k\neq\ldots\neq S_{\alpha_l}U^{k_l}S_\beta^*e_k\}$, whose complement in $A_\beta$ was shown to be finite in the proof of Lemma \ref{orthonormal}. Indeed, suppose that $S_{\alpha_{i_0}}U^{k_{i_0}}S_\beta^*e_{k_0}=Ve_{k_0}$ and $S_{\alpha_{i_1}}U^{k_{i_1}}S_\beta^*e_{k_1}=Ve_{k_1}$ for some $i_0\neq i_1$ and $k_0\neq k_1$. This would imply that $|1- \gamma_{i_0}|<1$,
$|1- \gamma_{i_1}|<1$, $|\gamma_{i_0}|<1/2$, $|\gamma_{i_1}|<1/2$ which are clearly incompatible. 
\end{proof}

We are now in a position to prove a result that gives a simple description of all permutative unitaries of $\ell_2(\mathbb{Z})$ which are also elements of $\CQ_2$.

\begin{thm}\label{permutativeQ2}
A permutative unitary $V\in\mathcal{B}(\ell_2(\mathbb{Z}))$ belongs to $\CQ_2$ if and only if it is of the form
$\sum _{i\in F} S_{\alpha_i}U^{k_i}S_{\beta_i}^*$, where $F$ is a finite set over which the triples $(\alpha_i,\beta_i, k_i)\in W_2\times W_2\times \mathbb{Z}$ run, with
$\sum_{i\in F} S_{\alpha_i}S_{\alpha_i}^*=\sum_{i\in F} S_{\beta_i} S_{\beta_i}^*=1$.
\end{thm}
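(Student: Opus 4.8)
The plan is to treat the two implications separately; the ``if'' direction is short, while the ``only if'' direction carries essentially all of the work. For the ``if'' part, assume $V=\sum_{i\in F}S_{\alpha_i}U^{k_i}S_{\beta_i}^*$ with $\sum_{i\in F}S_{\alpha_i}S_{\alpha_i}^*=\sum_{i\in F}S_{\beta_i}S_{\beta_i}^*=1$. Proposition \ref{unitaryproj} already gives that $V$ is unitary, and its proof shows that the projections $S_{\beta_i}S_{\beta_i}^*$ are pairwise orthogonal, as are the $S_{\alpha_i}S_{\alpha_i}^*$. Hence, in the canonical representation, each summand sends a basis vector $e_k$ either to $0$ or to another basis vector, and for each fixed $k$ exactly one summand is non-zero on $e_k$, so $Ve_k$ is always a basis vector. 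Since a unitary mapping an orthonormal basis into itself necessarily permutes it, $V$ is permutative.

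For the ``only if'' direction, let $V\in\CQ_2$ be a permutative unitary. Since $\CQ_2$ is generated by $U$ and $S_2$ and, as recalled before Proposition \ref{unitaryproj}, every monomial in these generators and their adjoints reduces to the canonical form $S_\alpha U^kS_\beta^*$, the linear span of such monomials is dense in $\CQ_2$; so I would fix $\varepsilon$ with $0<\varepsilon<\frac12$ and approximate $V$ within $\varepsilon$ by some $\sum_i\gamma_iS_{\alpha_i}U^{k_i}S_{\beta_i}^*$. Then, using the identity $S_\alpha U^kS_\beta^*=S_\alpha U^kS_\beta^*(S_{\beta1}S_{\beta1}^*+S_{\beta2}S_{\beta2}^*)$, the relation $S_\beta^*S_{\beta j}=S_j$, and the intertwining rules (which let one move powers of $U$ to the left of $S_j$), I would rewrite each monomial so that all source words acquire the common length $M:=\max_i|\beta_i|$. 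After merging terms with identical monomial this presents the approximant as $\sum_{|\beta|=M}\sum_{i\in I_\beta}\gamma_{\beta,i}S_{\alpha_{\beta,i}}U^{k_{\beta,i}}S_\beta^*$, where for each of the $2^M$ words $\beta$ of length $M$ the monomials indexed by $I_\beta$ are pairwise distinct, and the projections $S_\beta S_\beta^*$, $|\beta|=M$, are pairwise orthogonal and sum to $1$.

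The heart of the argument is then to fix $\beta$ with $|\beta|=M$ and apply Corollary \ref{straightmonomial}, with the block indexed by $I_\beta$ playing the role of the distinguished monomials having common source $S_\beta^*$ and all remaining monomials (whose source projections are orthogonal to $S_\beta S_\beta^*$) playing the role of the tail; one first checks that $I_\beta\neq\emptyset$, since otherwise $Ve_k$ would lie within $\varepsilon<1$ of $0$ for every $k$ in the infinite set $A_\beta$, contradicting $\|Ve_k\|=1$. The corollary yields a single monomial $S_{\alpha_\beta}U^{k_\beta}S_\beta^*$ with $S_{\alpha_\beta}U^{k_\beta}S_\beta^*e_k=Ve_k$ for all $k\in A_\beta$ outside a finite set. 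Putting $W:=\sum_{|\beta|=M}S_{\alpha_\beta}U^{k_\beta}S_\beta^*\in\CQ_2$ and using that the sets $A_\beta$ partition $\mathbb{Z}$, I obtain $We_k=Ve_k$ for all but finitely many $k$, so $V-W$ is a finite-rank element of $\CQ_2$. The decisive point is that $\CQ_2$ contains no non-zero compact operator in the canonical representation: it acts irreducibly on $\ell_2(\mathbb{Z})$ and is simple, so meeting the compacts non-trivially would force $\CQ_2\supseteq\mathcal{K}(\ell_2(\mathbb{Z}))$ and then $\CQ_2=\mathcal{K}(\ell_2(\mathbb{Z}))$, impossible since $U\in\CQ_2$ is not compact. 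Hence $V=W=\sum_{|\beta|=M}S_{\alpha_\beta}U^{k_\beta}S_\beta^*$, whose source projections already sum to $1$; since $V$ is unitary, Proposition \ref{unitaryproj} forces $\sum_{|\beta|=M}S_{\alpha_\beta}S_{\alpha_\beta}^*=1$ as well, so $V$ has exactly the asserted form.

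I expect the main obstacle to be the homogenization bookkeeping --- pushing all source words to a common length so that the source projections split into a pairwise orthogonal family --- which is precisely what makes Corollary \ref{straightmonomial} applicable block by block; once that is arranged, the absence of non-zero compacts in $\CQ_2$ is what converts the $\varepsilon$-approximation into the exact equality $V=W$.
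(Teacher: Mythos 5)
Your proposal is correct and follows essentially the same route as the paper: approximate $V$ by an algebraic element, normalize so that the source words have a common length (the paper assumes this without loss of generality where you carry out the homogenization explicitly), apply Corollary \ref{straightmonomial} block by block over the words $\beta$, and kill the resulting finite-rank discrepancy using the simplicity of $\CQ_2$, finishing with Proposition \ref{unitaryproj}. The only cosmetic difference is that the paper first disposes separately of the case where the source projections already sum to $1$, whereas you treat all cases uniformly.
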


\begin{proof}
Clearly, we only need to worry about the ''only if'' part.
Let $V$ be a permutative unitary in $\CQ_2$ and let $\Psi$ be the bijection of $\mathbb{Z}$ implementing $V$, i.e.
$Ve_k=e_{\Psi(k)}$, $k\in\mathbb{Z}$.
By definition, $V$ is a norm limit of a sequence $\{T_n: n\in\mathbb{N}\}$ of operators of the form
$T_n=\sum_{(\alpha_i, \beta_i, k_i)\in F_n} \gamma_i S_{\alpha_i} U^k_i S_{\beta_i}^*$, where $\gamma_i$ are all scalar coefficients, and $F_n\subset W_2\times W_2\times \mathbb{Z}$ is a finite set. 
To begin with, we observe that the inequalities $\sum_{i\in F_n} S_{\alpha_i} S_{\alpha_i}^*\geq 1$ and $\sum_{i\in F_n} S_{\beta_i} S_{\beta_i}^*\geq 1$ hold eventually
as $T_n$ is eventually an invertible operator. 
The case in which $\sum_{i\in F_n} S_{\beta_i} S_{\beta_i}^*=1$, for some $n$, is immediately dealt with, for $V$ is simply given by $\sum_{i\in F_n} S_{\alpha_i}U^{k_i}S_{\beta_i}^*$.
Indeed, in this case for every $k$ there exists a unique $i_0= i_0(k)$ such that $S_{\beta_{i_0}}^*e_k$ is different from zero. This means only one term survives
in the sum $\sum_{i\in F_n}\gamma_i S_{\alpha_i}U^{k_i} S_{\beta_i}^*e_k$, namely that corresponding to $i=i_0(k)$. So
for the inequality $\|e_{\Psi(k)}-\sum_{i\in F_n}\gamma_i S_{\alpha_i}U^{k_i}S_{\beta_i}^*e_k\|<\varepsilon$ to hold for any $k$ is necessary
that $S_{\alpha_{i_0(k)}} U^{i_0(k)} (S_{\beta_{i_0(k)}})^* e_k= e_{\Psi(k)}$ for every $k\in\mathbb{Z}$ as long as $\varepsilon$ is chosen strictly less
than $1$. Therefore, for every $k\in\mathbb{Z}$ we have $Ve_k = e_{\psi(k)}=\sum_{i\in F_n} S_{\alpha_i} U^{k_i} S_{\beta_i}^*e_k$.
The conclusion is now got to, as the equality $\sum_{i} S_{\alpha_i} S_{\alpha_i}^*=1$ is automatically satisfied thanks to Proposition \ref{unitaryproj}. 
In order to deal with the case in which the sum $\sum_{i\in F_n} S_{\beta_i} S_{\beta_i}^*$ is greater than
$1$, 
it is convenient to assume that for any given $n$ the lengths  of the multi-indices $\beta_i$ are all the same as $i$ runs over $F_n$, say $|\beta_i|=k$ for every $i$ ($k$ will of course depend on $n$).
Fix an $n$ such that $T_n$ is invertible and $\|T_n -V\|<\frac{1}{2}$.
Let us simply denote $T_n$ by $T=\sum_{i\in F} \gamma_i S_{\alpha_i}U^{k_i}S_{\beta_i}^*$, with $|\beta_i|=k$ for every $i\in F$,  to ease the notation.
Now if $\sum_{i\in F} S_{\beta_i} S_{\beta_i}^*$ is greater than $1$, then the ranges of the projections $P_{\beta_i}= S_{\beta_i} S_{\beta_i}^*$ overlap.
But because the length of all monomials $S_{\beta_i}$ is the same, the ranges of $P_{\beta_i}$ and $P_{\beta_j}$
may overlap only if $\beta_i=\beta_j$. If, for any fixed $\beta\in W_2^k\doteq \{1,2\}^k$, we define $F_\beta\doteq \{i: \beta_i=\beta\}\subset F$, then $T$ may be more suitably rewritten as $T=\sum_{\beta\in W_2^k} \sum_{F_\beta} S_{\alpha_i}U^{k_i}S_\beta^* $
(it is understood that if $F_\beta$ is empty the corresponding term is zero).
 Corollary \ref{straightmonomial} now says that for every $\beta$ there exists a unique $i_0= i_0(\beta)\in F_\beta$ such that
$S_{\alpha_{i_0}}U^{k_{i_0}}S_\beta^* e_k=Ve_k$ for every $k\in A_\beta$ apart from a finite set of $C_\beta\subset A_\beta$.
Let us now set  $T_\beta\doteq S_{\alpha_{i_0(\beta)}}U^{k_{i_0(\beta)}}S_\beta^*$ and let $R_\beta$ be the finite-rank operator given by $R_\beta e_k=Ve_k-T_\beta e_k$ if $k\in C_\beta$ and $Re_k=0$ otherwise. Then we have proved the equality
$V=\sum_{\beta\in W_2^k} (T_\beta+R_\beta)=R+\sum_{\beta\in W_2^k} T_\beta$, where $R$ is the sum of all $T_\beta$'s.
Since both $V$ and $\sum_{\beta\in W_2^k}$ are in $\CQ_2$, the operator $R$ is in $\CQ_2$ as well. But because $\CQ_2$ is simple, the intersection $\CQ_2\cap \mathcal{K}(\mathcal{H})$ is trivial. Therefore, $T$ must be zero, that is $V=\sum_{\beta\in W_2^k} T_\beta$, which ends our proof. 
\end{proof}

Before stating our main result, we still need to prove a preliminary result which has an interest in its own although  it should       be a well-known fact. To the best of our knowledge, however, it is nowhere remarked explicitly, which is why we include a proof.

\begin{proposition}\label{normdiag}
If $V$ is a unitary on $\ell_2(\mathbb{Z})$ such that $\Ad(V)(\ell_\infty(\mathbb{Z}))=\ell_\infty(\mathbb{Z})$, then $V$
uniquely decomposes as $V=dP$, where $d$ is a diagonal unitary, i.e. $d\in\ell_\infty(\mathbb{Z})$, and $P$ is a permutative unitary, i.e. $Pe_k=e_{\Psi(k)}$, for every $k\in\mathbb{Z}$, for a suitable bijection $\Psi$ of $\mathbb{Z}$. 
\end{proposition}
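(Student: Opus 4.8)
The plan is to reconstruct the bijection $\Psi$ from the action of $\Ad(V)$ on the diagonal, then peel it off. Concretely, $\ell_\infty(\IZ)$ is generated by its minimal projections, namely the rank-one projections $E_k$ onto $\C e_k$, and these are exactly the minimal projections of the algebra. Since $\Ad(V)$ is a $^*$-automorphism of $\ell_\infty(\IZ)$, it permutes the minimal projections: for each $k$ there is a unique $\Psi(k)\in\IZ$ with $V E_k V^* = E_{\Psi(k)}$, and $\Psi$ is a bijection of $\IZ$ because $\Ad(V^*)$ furnishes the inverse. First I would record this and let $P$ denote the permutative unitary $P e_k = e_{\Psi(k)}$, so that $\Ad(P)$ agrees with $\Ad(V)$ on $\ell_\infty(\IZ)$ by construction, hence $\Ad(P^*V)$ is the identity on $\ell_\infty(\IZ)$.

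Next I would argue that a unitary commuting with every $E_k$ must itself be diagonal. Indeed, writing $V^*P = \sum_{j,k} c_{jk}E_j(\cdot)E_k$ in matrix units (or simply: $E_j (P^*V) E_k$ must be a scalar multiple of the matrix unit $e_j e_k^*$, and since $P^*V$ commutes with each $E_k$ the off-diagonal entries vanish), we conclude $d \doteq P^*V$ lies in $\ell_\infty(\IZ)$; it is unitary as a product of unitaries, so each diagonal entry has modulus one. Rearranging gives $V = P d'$ with $d' = P d P^* \in \ell_\infty(\IZ)$ a diagonal unitary — or, if one prefers the stated order $V = dP$, one instead sets $d \doteq VP^*$ and checks directly that $d$ commutes with every $E_k$ (because $\Ad(VP^*)$ fixes $\ell_\infty(\IZ)$ pointwise: $VP^* E_k P V^* = V E_{\Psi^{-1}(k)} V^* = E_k$), whence $d$ is diagonal and unitary. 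Either way one lands on the factorization $V = dP$.

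For uniqueness, suppose $V = dP = d'P'$ with $d,d'$ diagonal unitaries and $P,P'$ permutative unitaries implementing bijections $\Psi,\Psi'$. Then $(d')^{-1}d = P'P^{-1}$ is simultaneously diagonal and permutative. Comparing the action on $e_k$: the left side sends $e_k$ to a scalar multiple of $e_k$, the right side sends $e_k$ to $e_{\Psi'(\Psi^{-1}(k))}$, forcing $\Psi = \Psi'$, hence $P = P'$ and then $d = d'$. This settles the proposition.

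The only genuinely delicate point is the very first one: one needs that the minimal projections of $\ell_\infty(\IZ)$ are precisely the $E_k$ and that every $^*$-automorphism preserves the set of minimal projections — but this is immediate since $E_k$ is characterized order-theoretically (a nonzero projection $p$ is minimal iff $p\ell_\infty(\IZ)p = \C p$), a property manifestly preserved by $^*$-automorphisms. So there is really no serious obstacle; the content is entirely bookkeeping, which is presumably why the authors call it a well-known fact. The one caveat worth flagging in the write-up is that $\Psi$ is automatically a bijection of all of $\IZ$ (not merely an injection) precisely because $\Ad(V)$ is invertible with inverse $\Ad(V^*)$, so no separate surjectivity argument — and in particular no appeal to the structure of $\CQ_2$ — is needed.
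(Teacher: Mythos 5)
Your argument is correct and follows essentially the same route as the paper: both proofs hinge on the observation that $\Ad(V)$ permutes the minimal projections of $\ell_\infty(\mathbb{Z})$, which produces the bijection $\Psi$, and then extract the diagonal part (the paper by computing $Ve_k=\mu_k e_{\Psi(k)}$ directly, you by checking that $VP^*$ commutes with every $E_k$ — a purely cosmetic difference). Your uniqueness argument is also the same one the paper deems ``entirely obvious.''
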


\begin{proof}
We denote by $\delta_k\in\ell^\infty(\mathbb{Z})$ the orthogonal projection onto $\mathbb{C}e_k$. Since
$\Ad(V)$ restricts to  an automorphism of the von Neumann algebra $\ell_\infty(\mathbb{Z})$, $V\delta_kV^*$ must be a minimal projection of $\ell_\infty(\mathbb{Z})$. This means that $V\delta_kV^*=\delta_{\Psi(k)}$, for a suitable bijection
$\Psi$ of $\mathbb{Z}$ into itself, that is $V\delta_k=\delta_{\Psi(k)}V$. If we now evaluate the last equality on the vector $e_k$, we find $Ve_k=\delta_{\Psi(k)}Ve_k$. In other words, $Ve_k$ must be an eigenvector of $\delta_{\Psi(k)}$, and so
$Ve_k=\mu_ke_{\Psi(k)}$, where each $\mu_k$ is a complex number whose absolute value is one. If we define $d_k\doteq \mu_{\Psi^{-1}(k)}$, then $V$ can be rewritten as the product
$dP$, where $d\in\ell_\infty(\mathbb{Z})$ is the diagonal operator whose action 
on the basis vectors is given by $de_k\doteq d_ke_k $ and $P$ the permutative associated with $\Psi$.
Finally, the uniqueness of this decomposition is entirely obvious.
\end{proof}

\begin{thm}
Any unitary $v\in\CQ_2$ that normalizes the diagonal $\CD_2$ can be uniquely
written as $v=dP$, where $d$ is a unitary belonging to $\CD_2$ and $P\in\CQ_2$ a unitary of the form
$\sum_{i=1}^N S_{\alpha_i}U^{k_i}S_{\beta_i}^*$, with $\sum_{i=1}^N S_{\alpha_i}S_{\alpha_i}^*=\sum_{i=1}^N S_{\beta_i}S_{\beta_i}^*= 1$. 
\end{thm}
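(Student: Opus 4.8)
The plan is to work in the canonical representation on $\ell_2(\Z)$ and first pass to bicommutants. There the diagonal projections $P_\beta$, $\beta\in W_2$, are exactly the projections onto the residue classes of $\Z$ modulo powers of $2$, so for each $k\in\Z$ the infimum of the decreasing chain of $P_\beta$'s that single out $k$ equals the rank-one projection $\delta_k$ onto $\C e_k$; hence $\CD_2''=\ell_\infty(\Z)$. Since $\Ad(v)$ is ultraweakly continuous and carries $\CD_2$ onto $\CD_2$, it carries $\ell_\infty(\Z)$ onto $\ell_\infty(\Z)$, so Proposition~\ref{normdiag} gives a unique factorization $v=dP$ with $d\in\ell_\infty(\Z)$ a diagonal unitary and $P$ a permutative unitary, $Pe_k=e_{\Psi(k)}$ for a bijection $\Psi$ of $\Z$. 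Because $d$ commutes with all of $\ell_\infty(\Z)\supseteq\CD_2$, for $a\in\CD_2$ one has $PaP^*=d^*(vav^*)d=vav^*\in\CD_2$ and likewise $P^*aP=v^*av\in\CD_2$; thus $P$ itself normalizes $\CD_2$, so $\Ad(P)$ restricts to an automorphism of $\CD_2\cong C(K)$ and hence to a homeomorphism of the Cantor set $K$, which we identify with the ring $\Z_2$ of $2$-adic integers. Tracking the rank-one projections $\delta_k$ shows that this homeomorphism extends the bijection $\Psi$ to a homeomorphism $\bar\Psi$ of $\Z_2$.

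Everything now reduces to showing $P\in\CQ_2$: granting this, $d=vP^*\in\CQ_2\cap\ell_\infty(\Z)$, and since $\CD_2$ is maximal abelian in $\CQ_2$ (being Cartan) while $\CD_2\subseteq\CQ_2\cap\ell_\infty(\Z)$, we get $\CQ_2\cap\ell_\infty(\Z)=\CD_2$ and $d\in\CU(\CD_2)$; Theorem~\ref{permutativeQ2} then puts $P$ in the asserted form, and the uniqueness of $d$ and $P$ is inherited from Proposition~\ref{normdiag}. To prove $P\in\CQ_2$, fix a finite sum $T=\sum_{i\in F}\gamma_iS_{\alpha_i}U^{k_i}S_{\beta_i}^*$ with $\|v-T\|<\tfrac12$ and put $\ell=\max_i|\beta_i|$, so that for each $\gamma\in W_2^\ell$ the class $A_\gamma=\{n\in\Z:P_\gamma e_n=e_n\}$ is either contained in or disjoint from each $A_{\beta_i}$. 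Then $T$ sends $e_m$, $m\in A_\gamma$, to $\sum_{[j]}\Gamma_{[j]}e_{h_{[j]}(m)}$, where the $h_{[j]}$ are the distinct affine functions of $m$ on $A_\gamma$ produced by the monomials of $T$ whose domain contains $A_\gamma$ (grouped so as to be distinct, hence pairwise coinciding at most once by the reasoning of Lemma~\ref{orthonormal}). Running the reasoning behind Corollary~\ref{straightmonomial} (and the proposition on which it rests) with the unit vectors $ve_m=d_{\Psi(m)}e_{\Psi(m)}$ in place of a permutative unitary — using $|d_{\Psi(m)}|=1$ and $\|Te_m-d_{\Psi(m)}e_{\Psi(m)}\|<\tfrac12<1$ to force a unique dominant term — one obtains a single index $i_0=i_0(\gamma)\in F$ with $S_{\alpha_{i_0}}U^{k_{i_0}}S_{\beta_{i_0}}^*e_m=e_{\Psi(m)}$ for all $m\in A_\gamma$ outside a finite set.

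The decisive step, where one must resist a circular appeal to the simplicity of $\CQ_2$, is to promote this almost-everywhere identity to an everywhere identity. The map $g_{i_0(\gamma)}$ sending $m$ to the index of $S_{\alpha_{i_0}}U^{k_{i_0}}S_{\beta_{i_0}}^*e_m$ is affine with slope an integral power of $2$, hence uniformly continuous for the $2$-adic metric on $A_{\beta_{i_0}}$, so it extends continuously to the clopen ball $\widehat{A_\gamma}\subseteq\Z_2$; there it coincides with the continuous map $\bar\Psi$ on the cofinite — hence dense — subset of $A_\gamma$ supplied by the previous step, and therefore on all of $A_\gamma$. Consequently $Pe_m=S_{\alpha_{i_0(\gamma)}}U^{k_{i_0(\gamma)}}S_{\beta_{i_0(\gamma)}}^*\,S_\gamma S_\gamma^*\,e_m$ for every $m\in A_\gamma$, and since $\{A_\gamma:\gamma\in W_2^\ell\}$ partitions $\Z$, i.e. $\sum_{\gamma\in W_2^\ell}S_\gamma S_\gamma^*=1$, summing yields $P=\sum_{\gamma\in W_2^\ell}S_{\alpha_{i_0(\gamma)}}U^{k_{i_0(\gamma)}}S_{\beta_{i_0(\gamma)}}^*\,S_\gamma S_\gamma^*$, a finite sum of products of the generators, so $P\in\CQ_2$. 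I expect this final passage — from agreement off a finite set to genuine equality — to be the main obstacle; the mechanism that resolves it is the $2$-adic continuity of both $\bar\Psi$ and the affine maps $g_{i_0(\gamma)}$ together with density, which does not presuppose $P\in\CQ_2$.
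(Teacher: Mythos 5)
Your argument is correct, and although it opens exactly as the paper's proof does --- passing to $\CD_2''=\ell_\infty(\IZ)$ in the canonical representation and invoking Proposition~\ref{normdiag} to write $v=dP$ --- it takes a genuinely different route through the hard part, namely showing that the permutative factor $P$ (and hence $d$) lies in $\CQ_2$. The paper works with a whole sequence of algebraic approximants $T_n$, factors each as $d_nP_n$ after stripping off finite-rank corrections, proves that $\{d_n\}$ is Cauchy and that $\{P_n\}$ stabilizes, and finally disposes of the accumulated finite-rank remainder via $\CQ_2\cap\CK(\ell_2(\IZ))=\{0\}$, i.e.\ by simplicity. You instead take a single approximant $T$ with $\|v-T\|<\tfrac12$, refine along the level-$\ell$ cylinders $A_\gamma$, and run the dominant-coefficient argument of Corollary~\ref{straightmonomial} --- correctly adapted from permutative unitaries to the vectors $ve_m=d_{\Psi(m)}e_{\Psi(m)}$ of unit modulus --- to isolate one monomial agreeing with $P$ on a cofinite subset of each $A_\gamma$. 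Your new ingredient is the promotion from cofinite to full agreement: the affine map attached to the selected monomial and the homeomorphism of $\IZ_2$ induced by $\Ad(P)\upharpoonright_{\CD_2}$ (which you correctly identify as extending $\Psi$ by tracking the minimal projections) are both $2$-adically continuous and agree on a dense subset of the clopen ball over $A_\gamma$, hence everywhere. This produces $P$ as an explicit finite sum of monomials, bypassing both the stabilization argument and the compactness/simplicity step --- which, for the record, is not actually circular in the paper's version, since there the remainder is a difference of operators already known to lie in $\CQ_2$ --- and it makes the final citation of Theorem~\ref{permutativeQ2} a convenience rather than a necessity, as your closed formula already has the required form. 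What the paper's route buys is independence from the arithmetic of the canonical representation; what yours buys is a shorter and more constructive identification of $P$.
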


\begin{proof}
From now till the end of the proof we will be working in the canonical representation of $\CQ_2\subset\CB(\ell_2(\mathbb{Z}))$. 
The uniqueness of the decomposition is pretty obvious as the sole operator which is simultaneously diagonal and permutative (with respect to the canonical basis of $\ell_2(\mathbb{Z})$) is the identity $1$.
Let us now go to the existence of such a decomposition. We first note that if a unitary $V\in\CB(\ell_2(\mathbb{Z}))$ normalizes
$\CD_2$, then it also normalizes the von Neumann algebra generated by it, namely $\ell_\infty(\mathbb{Z})$. 
In light of Proposition \ref{normdiag}, our unitary
$V$ factors as a product $dP$, where $d$ is a diagonal unitary, that is $d\in\ell_\infty(\mathbb{Z})$ and $P$ is a permutative unitary, that is there exists a bijection $\Psi$ of $\mathbb{Z}$  such that 
$Pe_k=e_{\Psi(k)}$, for every $k\in\mathbb{Z}$. Now the proof boils down to showing that $d$ and $P$ actually sit in
$\CQ_2$ as a consequence of $V$ being a unitary of $\CQ_2$.\\
Because $dP$ lies in $\CQ_2$, for any $\varepsilon>0$ there exists an algebraic element $T_\varepsilon$ of the form
$\sum_{i=1}^{N_\varepsilon} \gamma_i S_{\alpha_i}U^{k_i}S_{\beta_i}^*$, where the $\gamma_i$'s are all complex coefficients, such that
$\|dP-T_\varepsilon\|<\varepsilon$. As soon as $\varepsilon$ is small enough, the operator $T_\varepsilon$ is invertible itself, which means the sums $\sum S_{\alpha_i}S_{\alpha_i}^*$ and $\sum S_{\beta_i}S_{\beta_i}^*$ are both greater than $1$.
We now want to rid ourselves of possible overlappings in much the same way as we did in the proof of Theorem \ref{permutativeQ2}. So suppose there is an $l$-tuple of overlapping terms. We do not harm the generality if we further suppose
these are just the first $l$ terms. In other terms, our $T_{\varepsilon}$ is of the form
$$\sum_{i=1}^l \gamma_iS_{\alpha_i}U^{k_i}S_\beta^*+\sum_{i=l+1}^{N_\varepsilon}  \gamma_iS_{\alpha_i}U^{k_i}S_{\beta_i}^* $$
with $P_\beta\perp P_{\beta_i}$ for every $i=l+1,\ldots, N_{\varepsilon}$. A very minor variation of the proof
of Corollary \ref{straightmonomial} tells us that there exists a subset $I_\beta\subset A_\beta\doteq \{k\in\mathbb{Z}: e_k=P_\beta e_k\}$ whose complement in $A_\beta$ is finite and for a unique $i_0\in\{1,2,\ldots, l\}$ one has
$S_{\alpha_{i_0}}U^{k_{i_0}}S_\beta^*e_k=e_{\Psi(k)}$ for every $k\in I_\beta$. Let us now define the finite-rank operator $R_\beta$ as $R_\beta e_k= dPe_k$ if $k\in A_\beta\setminus I_\beta$ and $R_\beta e_k=0$ otherwise.
The new operator $T_\varepsilon'\doteq \gamma_{i_0}S_{\alpha_{i_0}}U^{k_{i_0}}S_\beta^*+\sum_{i=l+1}^{N_\varepsilon}  \gamma_iS_{\alpha_i}U^{k_i}S_{\beta_i}^*+R_\beta$ still satisfies the inequality $\|dP-T_\varepsilon'\|<\varepsilon$. Furthermore, we also have the inequality $|\gamma_{i_0}-d_{\Psi(k)}|<\varepsilon$ per every $k\in I_\beta$. It is now clear that if we repeat this procedure as many times as needed we can get rid of all overlappings. By doing so we end up with a new algebraic 
approximant, which with a very slight abuse of notation we continue to denote by $T_\varepsilon$, given by a sum of the type  $\sum_{i=1}^{N_\epsilon}\gamma_i S_{\alpha_i}U^{k_i}S_{\beta_i}^*$ with $\sum_i S_{\beta_i}S_{\beta_i}^*=
\sum_i S_{\alpha_i}S_{\alpha_i}^*=1$ and $\|dP-(T_\varepsilon+R_\varepsilon)\|<\varepsilon$, where $R_\varepsilon$
is a finite-rank operator. Furthermore, for every $i=1,2,\ldots, N_\epsilon$ there exists a set $I_{\beta_i}\subset A_{\beta_i}$ such that
$A_{\beta_i}\setminus I_{\beta_i}$ is finite and 
\begin{equation}\label{incoeff}
|\gamma_i-d_{\Psi(k)}|<\varepsilon\quad\rm{for\, every}\,k\in I_{\beta_i}.
\end{equation}

Choosing $\varepsilon=\frac{1}{n}$ we get a sequence $\{T_n+R_n: n\in\mathbb{N}\}$, where $T_n$ is an operator of the form
$\sum_i \gamma_i S_{\alpha_i}U^{k_i}S_{\beta_i}^*$ with all the properties pointed out above and $R_n$ is a finite-rank operator, such that $\|v-(T_n+R_n)\|$ goes to zero.
Note that each $T_n$ factors as a product of the form $(\sum_{i=1}^l \gamma_i S_{\alpha_i}S_{\alpha_i}^*)(\sum_{j=1}^l S_{\alpha_j}U^{k_j}S_{\beta_j}^*)$. In 
other terms, each $T_n$ can be seen as the product 
$d_nP_n$, where $d_n$ is a diagonal operator in $\CD_2$ and $P_n$ a permutative operator in $\CQ_2$.
Now thanks to inequality \eqref{incoeff}, the sequance $\{d_n\}$ is immediately seen to be a Cauchy sequence with respect to the uniform norm
in $\mathcal{B}(\ell_2(\mathbb{Z}))$. Therefore, it converges to a certain $d'$, which is a diagonal operator in $\CD_2$.
We then show that the sequence $\{P_n:n\in\mathbb{N}\}$ must stabilize to a certain $P'$. 
We will argue by contradiction. Indeed, let $T_n$ and $T_m$ be such that two corresponding permutative factors $P_n$ and $P_m$ differ. 
There is no 
loss of generality if we further assume that the $\beta$'s appearing both in $P_n$ and in $P_m$ are all of the same length, say $k$.
Because $P_n$ and $P_m$ are different, there must exist at least one $\beta\in W_2^k$ such that the two corresponding monomials do not coincide, i.e.
$S_{\alpha_i}U^{k_i}S_{\beta}^*\neq S_{\alpha'_i}U^{k'_i}S_{\beta_i}^*$, hence the set $I\doteq \{k\in\mathbb{Z}: P_n e_k\neq P_m e_k\}$ is infinite. \\
The inequality $\|v-(T_n+R_n)\|<\frac{1}{n}$ applied to $T_n$ and $T_m$ leads to 
$$\|d_nP_n-d_mP_m +S\|<\frac{1}{n}+\frac{1}{m}$$
where $S$ is a finite-rank operator as well since it is the difference $R_n-R_m$.
In particular, we find that the inequality $\|d_nP_ne_k-d_mP_m e_k +Se_k\|<\frac{1}{n}+\frac{1}{m}$  holds for every $k\in I$.
But, as we next show, this is absurd as soon as $\frac{1}{n}+\frac{1}{m}<1$.
Indeed, in order for the inequalities to hold true, it is necessary that neither $(Se_k, e_{\Psi_n(k)})$ nor
$(Se_k, e_{\Psi_m(k)})$ vanishes. More precisely, their absolute values must be close to $1$. But then $\|Se_k\|$ is greater than $1$ for every $k\in I$. But this is absurd, as the sequence $\{Se_k:k\in I\}$ should in fact converge to zero, since it is the image through a compact operator of a sequence that weakly converges to zero.
Since the sequence $\{T_n\}$ converges in norm as the product of two converging sequences, 
the sequence $\{R_n\}$ must also converge. Let $R$ be its limit. Since $R$ clearly lies in the intersection $\CQ_2\cap\CK(\ell_2(\mathbb{Z}))$, by virtue of
the simplicity of $\CQ_2$ the operator $R$ is zero.
In other words, we have proved that $V=dP$ is nothing but $d'P'$, whence $d=d'\in \CD_2$ and $P=P'\in \CQ_2$. 
\end{proof} 

\begin{remark}
It is worth stressing that the result obtained above is a genuine generalization of the known result on 
$N_{\CD_2}(\CO_2)$, cf. \cite[Lemma 5.4]{Pow}. 
In order to see this, it is enough to show that a unitary of the form $u\doteq\sum_{i=1}^N S_{\alpha_i}U^{k_i}S_{\beta_i}^*$ will lie in the Cuntz algebra $\CO_2$ if and only if $k_i=0$ for every $i=1,2,\ldots, N$.
To this aim, it is convenient to work in the canonical representation. If  there exists $i_0\in\{1,2,\ldots , N\}$ such that $k_{i_0}$ is not zero, then we may safely suppose $k_{i_0}\geq 1$. Now pick the only $k\in\mathbb{Z}$ such that
$S_{\beta_{i_0}}^* e_k=e_{-1}$. Now $k$ is a negative integer such that
$ue_k=S_{\alpha_i}e_{k_{i_0}-1}=e_{n(k)}$ with $n(k)\geq 0$. Therefore, our unitary $u$ cannot be in $\CO_2$, for 
$\CH_{\pm}\subset \ell_2(\mathbb{Z})$ are invariant subspaces under the action of the Cuntz algebra.
\end{remark}  

\begin{remark}
Not only does our result cover the normalizer of $\CD_2$ in the Cuntz algebra $\CO_2$, but it also allows us to recover Putnam's result 
on the normalizer of the former algebra in the Bunce-Deddens algebra $\CQ_2^\mathbb{T}$ \cite[Lemma 5.1]{PUT}.  Indeed, 
for a unitary of the form $\sum_{i=1}^N S_{\alpha_i}U^{k_i}S_{\beta_i}^*$ to lie in the gauge-invariant subalgebra $\CQ_2^\mathbb{T}$
it is necessary to have $|\alpha_i|=|\beta_i|$, for every $i=1,2,\ldots , N$, which
means for every $i$ there exists an integer $l_i\in\mathbb{Z}$, which will depend on $\alpha_i$ and $\beta_i$, such that
$S_{\beta_i}=U^{l_i} S_{\alpha_i}$. But then our unitary takes the form 
$\sum_{i=1}^N S_{\alpha_i}S_{\beta_i}^*U^{2^{|\beta_i|}k_i}=\sum_{i=1}^N S_{\alpha_i}S_{\alpha_i}^*U^{-l_i}U^{2^{|\beta_i|}k_i}$  
which coincides with the formula given in {\it loc.cit.} as

\begin{align*}
\sum_{i=1}^N U^{-{2^{|\beta_i|}k_i}}U^{l_i}S_{\alpha_i}S_{\alpha_i}^*U^{-l_i} U^{2^{|\beta_i|}k_i}=&
\sum_{i=1}^N U^{-2^{|\beta_i|}k_i} S_{\beta_i}S_{\beta_i}^*U^{2^{|\beta_i|}k_i}=\sum_{i=1}^N S_{\beta_i} U^{-k_i}U^{k_i}S_{\beta_i}^*=1
\end{align*}
\end{remark}

\begin{remark}\label{decomp}
Since any unitary of the form $\sum_{i=1}^N S_{\alpha_i}U^{k_i}S_{\beta_i}^*$ can obviously be rewritten as the product $\left(\sum_{i=1}^N S_{\alpha_i}U^{k_i}S_{\alpha_i}^*\right)\left( \sum_{j=1}^N S_{\alpha_j}S_{\beta_j}^*\right)$, the foregoing result can also be stated in a slightly more intriguing way saying that any unitary in the normalizer 
$N_{\CD_2}(\CQ_2)$ decomposes into the product of a unitary in $N_{\CD_2}(\CQ_2^\mathbb{T})$ and a unitary in
$N_{\CD_2}(\CO_2)$. However, such a decomposition will fail to be unique, and one reason is for example that any diagonal elements
$d\in\CD_2$ sits in both $N_{\CD_2}(\CQ_2^\mathbb{T})$ and $N_{\CD_2}(\CO_2)$.
\end{remark}

\section{A diagrammatic description of the extended Thompson group $\CW$}
The group made up of the unitaries in $\CQ_2$ of the form $\sum _{i\in F} S_{\alpha_i}U^{k_i}S_{\beta_i}^*$, where $F$ is a finite set over which the triples $(\alpha_i,\beta_i, k_i)\in W_2\times W_2\times \mathbb{Z}$ run, will be referred to as  the extended Thompson group $\CW$. This section  provides a graphical description of its elements, which is similar to that for the elements of the genuine Thompson groups $F$, $T$, and $V$. \\

An element will be described by a $4$-tuple $(T_+,T_-,\tau,v)$, where $T_\pm$ are trees with the same number of leaves, say $n$, $\tau$ is a permutation of the set $\{1, \ldots , n\}$, and $v$ is a vector in $\IZ^n$.
Let $x=\sum _{i\in F} S_{\alpha_i}U^{k_i}S_{\beta_i}^*$ be an element of $\CW$. The collection of the indices $\alpha_i$ determines a finite subtree of the infinite binary tree of standard dyadic intervals \cite{CFP}.
Indeed, each $\alpha_i$ represents a path in the infinite binary tree of the standard dyadic intervals, from the root to a leaf. If $\alpha_i(k)=1$ then the $k$-th edge of the path is a left edge, whereas $\alpha_i(k)=2$ means that we are taking the right edge.
This yields the first tree $T_+$. In the same way we may get another tree $T_-$ from the indices $\beta_i$. 
As for the permutation, the leaves of $T_+$ and $T_-$ can be indexed from $1$ to $n$ starting from the left. If $\alpha_i$ is the $p$-th leaf of $T_+$ and $\beta_i$ is the $q$-th leaf of $T_-$, then the permutation is defined by $\tau(p):=q$.
The vector $v$ is given by $(k_1, \ldots , k_n)$.  \\

We now describe how to represent such an element graphically. We draw $T_+$ in the upper-half plane and $T_-$ upside-down in the lower-half plane.  We then join the $i$-th leaf of $T_+$ to the $\tau(i)$-th leaf of $T_-$.
Each leaf of the top tree has a charge given by $v(i)$. 
To take an example, below we represent the graphical description of the unitary $S_1^2 U^{k_1} (S_2S_1)^*+S_1S_2U^{k_2}S_1^* + S_2 U^{k_3}(S_2^2)^*$. In this case, the vector $v$ is $(k_1, k_2, k_3)$ and $\tau= (12)$.
$$
\includegraphics[scale=0.35]{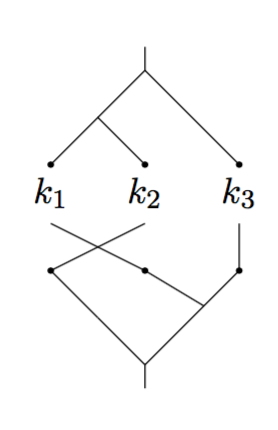}
$$ 

In this pictorial description, there are actually two reduction moves which may be performed and depend on the charge of the leaf.
$$
\includegraphics[scale=0.3]{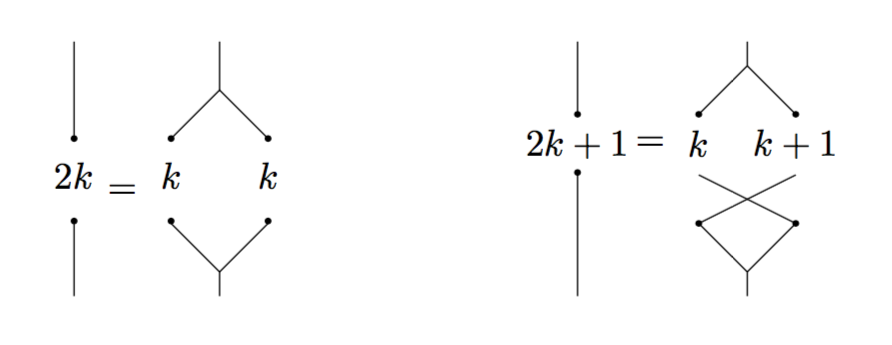}
$$ 
As shown by the following computations,
these reductions correspond to the insertion of the Cuntz relation $S_2S_2^*+S_1S_1^*=1$
\begin{align*}
S_\alpha U^{2k} S_\beta^* & = S_\alpha S_1S_1^*U^{2k} S_\beta^* +S_\alpha S_2S_2^*U^{2k} S_\beta^* = S_\alpha S_1U^{k}S_1^* S_\beta^* +S_\alpha S_2U^{k} S_2^*S_\beta^*\\
& = S_{\alpha 1} U^{k}S_{\beta 1}^*+S_{\alpha 2} U^{k}S_{\beta 2}^*\\
& 
= S_\alpha U^{2k} S_1S_1^* S_\beta^*+S_\alpha U^{2k}S_2S_2^* S_\beta^* 
\\
S_\alpha U^{2k+1} S_\beta^* & 
= S_\alpha S_1S_1^*U^{2k+1} S_\beta^* +S_\alpha S_2S_2^*U^{2k+1} S_\beta^*= S_\alpha S_1U^{k}S_1^*U S_\beta^* +S_\alpha S_2U^{k} S_2^*US_\beta^*\\
& 
= S_{\alpha 1} U^{k}S_{\beta 2}^*+S_{\alpha 2} U^{k+1}S_{\beta 1}^*\\
 & 
 = S_\alpha U^{2k+1} S_1S_1^*S_\beta^* +S_\alpha U^{2k+1}S_2S_2^* S_\beta^* 
\end{align*}
Given two elements $(T_+,T,\tau,v)$ and $(T,T_-,\tau',v')$, their product as elements of $\CQ_2$ is given by $(T_+,T,\tau,v)\cdot (T,T_-,\tau',v'):= (T_+,T_-,\tau\circ\tau',v+v')$. Thanks to the reduction moves, this actually describes the multiplication on the whole $\CW$. Clearly, the inverse of an element is given by $(T_+,T_-,\tau,v)^{-1}=(T_-,T_+,\tau^{-1},-v)$.

\begin{remark}
We should also mention that the group $\CW$  has  appeared before in the literature, albeit in different contexts.
For instance, in \cite{NEK}  it is shown how to associate a group  $V_d(G)$ to any given a self-similar action of a group $G$ over an alphabet $X$ of finite cardinality $d$. The groups obtained in this fashion are actually a generalization of the  Higman-Thompson group, and our group $\CW$ corresponds to the case $X=\{1,2\}$ and $G=\IZ$ thought of as the powers of the so-called odometer. 
Unlike $V$, the group $\CW$ is not simple as its abelianization is $\IZ$
\cite[Example 9.16]{NEK}.
\end{remark}

\section{Other normalizers}\label{normalizers}

A number of results about $\Aut_{\CD_2}(\CQ_2)$ are contained in \cite{ACR2}.
At present the structure of the group $\Aut(\CQ_2,\CO_2)$ is for the most part unknown, although we do know some remarkable examples, namely the extended
flip-flop automorphism and the gauge automorphisms. 
A related question is whether there exist automorphisms of $\CQ_2$ that restrict to proper endomorphisms of $\CO_2$.
At any rate, most of the subsequent discussion is concerned only with inner automorphisms of $\CQ_2$.

\subsection{Unitaries in the Bunce-Deddens algebra normalizing $\CO_2$}

If $\alpha$ is an automorphism of $\CQ_2$ that leaves $\CO_2$ globally invariant, the natural question immediately arises whether 
the restriction $\alpha\upharpoonright_{\CO_2}$ is an automorphism of the Cuntz algebra as well. Apart from the trivial situation where our automorphism
is of finite order, in which case its restriction is immediately seen to be an automorphism of $\CO_2$, a complete answer has not been given, not even in the simpler yet interesting case where $\alpha$ is an inner automorphism of $\CQ_2$. 
Now the question is recast by asking whether $w\CO_2w^*\subset \CO_2$, for a given unitary $w$ in $\CQ_2$,  can only hold true if $w$ lies in
$\CO_2$. In its full generality the latter question is still unexpectedly hard to answer. Therefore, we ought to start  with $w$ of a particular
form instead. Assuming $w\in\CQ_2^{\IT}$ seems to be a good work hypothesis to begin our discussion with. 

Given any infinite multi-index $\alpha\in\{1,2\}^\IN$, we denote by $\alpha(k)$ the multi-index of length $k$ that 
is obtained out of $\alpha$ by taking its first $k$ entries, i.e. $\alpha(k)\doteq (\alpha_1, \alpha_2, \ldots, \alpha_k)$.  

We recall from \cite[Section 7]{ACR2} the following identity 
$$S_i^*dS_i (x)= d(ix) \ , \quad d\in\CD_2, \;  x\in K=\{1,2\}^\IN\; . $$

\begin{lemma}\label{algebraic}
If $w=\sum_{i\in F} d_i U^i$, where $F\subset \IZ$ is a finite subset, is different from zero, then there exist 
$\alpha\in \{1,2\}^\IN$ and $h\in F$ such that 
$$
\lim_k S_{\alpha(k)}^* w U^{-h} S_{\alpha(k)}=d_h(\alpha)1
$$
with $d_h(\alpha)$ different from zero as well.
\end{lemma}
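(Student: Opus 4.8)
The plan is to pass to the canonical representation and track what the compressions $S_{\alpha(k)}^* \,\cdot\, S_{\alpha(k)}$ do to the building blocks $d_i U^i$. The key identity is the intertwining rule $S_i x = \varphi(x) S_i$, which gives, for a multi-index $\gamma$ with $|\gamma|=k$, the computation $S_\gamma^* d U^i S_\gamma = S_\gamma^* d S_\gamma \, S_\gamma^* U^i S_\gamma$ whenever $i$ is a multiple of $2^k$, while $S_\gamma^* U^i S_\gamma$ vanishes on the relevant subspace otherwise. More precisely, writing $P_\gamma = S_\gamma S_\gamma^*$, one has $S_\gamma^* U^i S_\gamma = U^{i/2^{|\gamma|}}$ when $2^{|\gamma|}\mid i$ and in general $U^i P_\gamma$ need not lie in the hereditary subalgebra generated by $P_\gamma$. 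So the first step is: fix $h\in F$ and consider $w U^{-h} = \sum_{i\in F} d_i U^{i-h} = d_h + \sum_{i\neq h} d_i U^{i-h}$. For $|\gamma|=k$ large, all the exponents $i-h$ with $i\neq h$ are nonzero and eventually not divisible by $2^k$, so $S_\gamma^*(d_i U^{i-h})S_\gamma$ should be forced small (in fact it should vanish in the limit), leaving $S_\gamma^* d_h S_\gamma$, which by the displayed identity $S_i^* d S_i(x) = d(ix)$ equals the evaluation $d_h(\gamma \cdot x)$ — and as $|\gamma|\to\infty$ along a branch $\alpha$ this converges to $d_h(\alpha)\mathbf 1$.

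So the second step is to choose the branch $\alpha$ and the index $h$ so that $d_h(\alpha)\neq 0$. Since $w\neq 0$, there is some $h\in F$ with $d_h\neq 0$ in $\CD_2 = C(K)$; pick a point $\alpha\in K$ with $d_h(\alpha)\neq 0$, and by continuity $d_h$ is bounded away from zero on a cylinder set, i.e.\ on all $x$ extending $\alpha(k)$ for $k$ large. One subtlety: we want the \emph{same} $h$ to dominate; but this is fine — we just fix $h$ first (any index where $d_h$ is nonzero), then choose $\alpha$ in the support of $d_h$. With $\alpha$ and $h$ so chosen, $S_{\alpha(k)}^* d_h S_{\alpha(k)}$ is the function $x\mapsto d_h(\alpha(k)x)$, which converges uniformly to the constant $d_h(\alpha)$ as $k\to\infty$ (uniform continuity of $d_h$ on the compact Cantor set $K$), and this constant is nonzero.

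The third step — and the main obstacle — is to show the error terms $S_{\alpha(k)}^*(d_i U^{i-h})S_{\alpha(k)}$ with $i\in F\setminus\{h\}$ really do go to zero in norm. The point is that for $j := i-h \neq 0$ fixed and $k$ large enough that $2^k > |j|$, the operator $S_\gamma^* U^j S_\gamma$ with $|\gamma|=k$ is \emph{not} simply a power of $U$; rather $U^j$ maps the range of $P_\gamma$ partly outside it, and one must estimate $\|S_\gamma^* d_i U^j S_\gamma\|$. Here it helps to move $d_i$ out: $S_\gamma^* d_i U^j S_\gamma = (S_\gamma^* d_i S_\gamma)(S_\gamma^* U^j S_\gamma)$ is \emph{wrong} in general because $d_i$ and $U^j$ do not commute and $S_\gamma S_\gamma^*$ is not central; instead one should argue directly in the canonical representation on $\ell_2(\IZ)$. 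There, $S_\gamma^* e_n = e_{n'}$ iff $n \equiv (\text{base-}2\text{ value of }\gamma) \pmod{2^k}$ with $n' = (n - \text{value})/2^k$, so $S_\gamma S_\gamma^*$ is the projection onto $\overline{\mathrm{span}}\{e_n : n \equiv c_\gamma \ (2^k)\}$ for the appropriate residue $c_\gamma$; then $U^j$ shifts this by $j$, landing in the residue class $c_\gamma + j \pmod{2^k}$, which for $0 < |j| < 2^k$ is a \emph{different} class, so $S_\gamma^* U^j S_\gamma$ annihilates every basis vector it is applied to — hence it is zero, and a fortiori $S_\gamma^* d_i U^j S_\gamma = 0$ for all but finitely many $k$ (once $2^k > \max_{i\in F}|i-h|$). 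This actually makes the limit exact rather than merely asymptotic. One should double-check the residue bookkeeping against the convention $\rho_c(U)e_k = e_{k+1}$, $\rho_c(S_2)e_k = e_{2k}$ and the definition of $S_\gamma$ for a word $\gamma$, but modulo that the argument is clean. Assembling: for $k$ large, $S_{\alpha(k)}^* w U^{-h} S_{\alpha(k)} = S_{\alpha(k)}^* d_h S_{\alpha(k)} \to d_h(\alpha)\mathbf 1 \neq 0$, which is the claim.
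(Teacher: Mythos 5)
Your proposal is correct and follows essentially the same route as the paper: pick $h$ with $d_h\neq 0$ and $\alpha$ in its support, show $S_{\alpha(k)}^* d_h S_{\alpha(k)}\to d_h(\alpha)1$ via the evaluation formula $S_i^*dS_i(x)=d(ix)$, and kill the terms $d_iU^{i-h}$ with $i\neq h$ using that $U^l$ moves the range of $P_{\alpha(k)}$ into a disjoint residue class mod $2^k$ once $2^k>|l|$. The only (harmless) divergence is that you obtain exact vanishing of the error terms for every $d\in\CD_2$ by working directly in the canonical representation (where $\CD_2$ acts diagonally), whereas the paper first approximates $d$ by finite-level diagonal elements and lets the approximation error tend to zero.
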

\begin{proof}
By definition if $w$ is different from zero, there must exist an $h\in F$ such that $d_h \in \CD_2$ is not zero, that is
$d_h(\alpha)\neq 0$ for some $\alpha \in K=\{1,2\}^{\IN}$.
We now prove that the limit holds with $\alpha$ and $h$ chosen as above. To this aim, 
it is enough to show $\lim_k S_{\alpha(k)}^*d_hS_{\alpha(k)}= d_h(\alpha)1$
and $\lim_k S_{\alpha(k)}^*d_i U^{i-h}S_{\alpha(k)}= 0$, for every $i\neq h$.
The first is easily proved as the equality $S_{\alpha(k)}^* d_h S_{\alpha(k)}(x)= d_h(\alpha(k)x)$
shows that $S_{\alpha(k)}^* d_h S_{\alpha(k)}(x)$ converges to $d_h(\alpha)1$ pointwise. But on the other hand,
by a straightforward adaptation of a result in \cite{ACR} we also know that the sequence  actually converges in norm. 
The second limit is in fact a consequence of a more general fact, namely that for any $d\in\CD_2$  and 
$l\neq 0$ we have $\lim_k S_{\alpha(k)}^*dU^l S_{\alpha(k)}=0$. This is in turn proved as follows.
Pick a sequence $\{x_j:j\in\IN\}$ such that $x_j\in\CD_2^j$ and $\|x_j- d\|\rightarrow 0$, then 
$$
\| S_{\alpha(k)}^* dU^{l}S_{\alpha(k)}\|\leq \| S_{\alpha(k)}^* (d-x_j)U^{l}S_{\alpha(k)}\|+\| S_{\alpha(k)}^* x_jU^{l}S_{\alpha(k)}\| \leq \| d-x_j\|
$$
as soon as $k\geq j+1$ and $2^k> |l|$, because the second term vanishes if $2^k> |l|$.
\end{proof}
Our next goal is to extend the reach of the foregoing lemma to cover the whole $\CQ_2^{\IT}$.
With this in mind, we recall that this algebra can be more conveniently thought of as a crossed product given by the action
of $\IZ$ on the diagonal $\CD_2$ through the odometer map, cf. \cite{ACR2}. In other terms, the map
$\Psi:  \CD_2\rtimes \IZ\to\CQ_2^{\IT}$ given by
$\Psi(V)= U$ $\Psi(d)=d$ for all $d\in\CD_2$, extends to an isomorphism,
where $\CD_2\rtimes \IZ$ is understood as $C^*(\CD_2, V)$, with $V$ being a unitary whose adjoint action coincides with the
odometer, and $\CQ_2^\IT$ is understood as $C^*(\CD_2, U)$.
This makes it plain that $\beta_z(U):=zU$ and $\beta_z(d):=d$, $z\in\IT$, define a group of automorphism of $\CQ_2^\IT$.
These enable us to define the maps $\Phi_i: \CQ_2^\IT\to \CD_2$ by setting $\Phi_i(x):=\int_\IT \beta_z(xU^{-i})dz$, cf. \cite[p. 223]{Davidson},
which can be regarded as generalized Fourier coefficients. Indeed, it is no coincidence that a version of Fej\'er's theorem holds for $\CQ_2^\IT$ as well.
\footnote{ Recently, a Fej\'er-type theorem has also been proved \cite{ACR5} for quite an ample class of
$C^*$-algebras, which includes the $2$-adic ring $C^*$-algebra.}
More precisely, if we define  $\Sigma_n(x)\doteq \sum_{j=-n}^n \left(1-\frac{|j|}{n+1}\right)\Phi_j(x)U^j$, then
$\Sigma_n(x)$ can be shown to converge to $x$ in norm for every $x\in\CQ_2^{\IT}$, see e.g. \cite[Theorem VIII.2.2, p. 223]{Davidson}
\begin{remark}
For any unitary $d\in\CD_2$, one can define the automorphism of the Bunce-Deddens $\beta_d$ given by $\beta_d(U):=dU$, $\beta_d(\tilde d):=\tilde{d}$ for all $\tilde{d}\in \CD_2$. 
\end{remark}

\begin{lemma}\label{nonzerolimit}
For any non-zero $w\in\CQ_2^{\IT}$ there exist $h\in \IZ$ and $\alpha\in\{1,2\}^\IN$ such that
$$
\lim_k S_{\alpha(k)}^*w U^{-h} S_{\alpha(k)}= \Phi_h(w)(\alpha)1
$$
and $\Phi_h(w)(\alpha)$ is different from zero as well.
\end{lemma}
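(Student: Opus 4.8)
The plan is to bootstrap from Lemma \ref{algebraic} using the Fej\'er-type theorem for $\CQ_2^{\IT}$ recalled above. First I would note that a non-zero $w$ must have a non-vanishing generalized Fourier coefficient: if $\Phi_j(w)=0$ for every $j\in\IZ$, then every Ces\`aro mean $\Sigma_n(w)$ vanishes and hence $w=\lim_n\Sigma_n(w)=0$. So I pick $h\in\IZ$ with $\Phi_h(w)\neq 0$ in $\CD_2\cong C(K)$, and then a point $\alpha\in K=\{1,2\}^{\IN}$ with $c:=\Phi_h(w)(\alpha)\neq 0$; these are the $h$ and $\alpha$ for which the limit will be shown to hold.

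Next I would record a harmless strengthening of Lemma \ref{algebraic}: its proof in fact establishes that, for \emph{every} $\alpha\in K$, \emph{every} $h\in\IZ$ and \emph{every} algebraic element $w'=\sum_{i\in F}d_iU^i$, one has $\lim_k S_{\alpha(k)}^*\,w'\,U^{-h}\,S_{\alpha(k)}=d_h(\alpha)1$ in norm, with the convention $d_h:=0$ when $h\notin F$ (the existential clause in Lemma \ref{algebraic} only serves to guarantee $d_h(\alpha)\neq 0$). Applying this to the Ces\`aro mean $w'=\Sigma_n(w)=\sum_{j=-n}^{n}(1-\tfrac{|j|}{n+1})\Phi_j(w)U^j$, whose $U^h$-coefficient is $(1-\tfrac{|h|}{n+1})\Phi_h(w)$ once $n\geq|h|$, gives $\lim_k S_{\alpha(k)}^*\,\Sigma_n(w)\,U^{-h}\,S_{\alpha(k)}=c_n 1$ with $c_n:=(1-\tfrac{|h|}{n+1})c\to c$ as $n\to\infty$.

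The conclusion then follows from a routine $\varepsilon/3$ estimate. Given $\varepsilon>0$, I choose $n$ so large that $\|w-\Sigma_n(w)\|<\varepsilon/3$ (Fej\'er) and $|c|\,|h|/(n+1)<\varepsilon/3$; since $S_{\alpha(k)}$ is an isometry and $U^{-h}$ is unitary, $\|S_{\alpha(k)}^*(w-\Sigma_n(w))U^{-h}S_{\alpha(k)}\|\leq\|w-\Sigma_n(w)\|<\varepsilon/3$ for all $k$. Freezing this $n$, the previous paragraph gives a $k_0$ with $\|S_{\alpha(k)}^*\Sigma_n(w)U^{-h}S_{\alpha(k)}-c_n 1\|<\varepsilon/3$ for $k\geq k_0$; since $|c_n-c|<\varepsilon/3$, the triangle inequality yields $\|S_{\alpha(k)}^*wU^{-h}S_{\alpha(k)}-c1\|<\varepsilon$ for all $k\geq k_0$. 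This is precisely the asserted limit, and $\Phi_h(w)(\alpha)=c\neq 0$ by the choice of $h$ and $\alpha$.

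As for where the actual content sits, the computation is elementary once Lemma \ref{algebraic} and the Fej\'er theorem are available, so the only genuine point of care is the interchange of the two limit processes in $n$ and $k$ --- handled above by freezing the algebraic approximant before letting $k\to\infty$ --- together with the (easy but necessary) remark that Lemma \ref{algebraic} applies with the \emph{prescribed} pair $(\alpha,h)$ rather than with some unspecified one.
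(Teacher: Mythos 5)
Your proof is correct and follows essentially the same route as the paper's: choose $h$ and $\alpha$ with $\Phi_h(w)(\alpha)\neq 0$, apply Lemma \ref{algebraic} to the Fej\'er mean $\Sigma_n(w)$ with the prescribed pair $(\alpha,h)$, and conclude by a three-term triangle inequality after freezing $n$. The only cosmetic difference is that you compute $\Phi_h(\Sigma_n(w))=(1-\tfrac{|h|}{n+1})\Phi_h(w)$ explicitly where the paper just bounds $\|\Phi_h(\Sigma_n(w))-\Phi_h(w)\|$ by $\|\Sigma_n(w)-w\|$; your explicit remark that Lemma \ref{algebraic} applies to an arbitrary prescribed $(\alpha,h)$ is a point the paper uses tacitly.
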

\begin{proof}
Since $w$ is different from zero, there exists an $h\in\IZ$ such that
$\Phi_h(w)$ is not zero, which means there is $\alpha\in\{1,2\}^\IN$ such that
$\Phi_h(w)(\alpha)\neq 0$. Now $S_{\alpha(k)}^*w U^{-h} S_{\alpha(k)}$ tends to $\Phi_h(w)(\alpha)$ as
a straightforward  application of the aforementioned Fej\'er theorem. Indeed, we have
\begin{align*}
& \| S_{\alpha(k)}^*w U^{-h} S_{\alpha(k)} -\Phi_h(w)(\alpha)1\|  \leq \| S_{\alpha(k)}^*(w-\Sigma_n(w)) U^{-h} S_{\alpha(k)}\|  \\
& \qquad + \| S_{\alpha(k)}^*\Sigma_n(w) U^{-h} S_{\alpha(k)}-\Phi_h(\Sigma_n(w))(\alpha)1\|\\
& \qquad +\|\Phi_h(\Sigma_n(w))(\alpha)1-\Phi_h(w)(\alpha)1\|\\
& \qquad \leq  2\|\Sigma_n(w)-w\|+\| S_{\alpha(k)}^*\Sigma_n(w) U^{-h} S_{\alpha(k)}-\Phi_h(\Sigma_n(w))(\alpha)1\|
\end{align*}
Given $\varepsilon>0$ there is $N_\varepsilon\in\IN$ such that $\|\Sigma_n(w)-w\|<\frac{\varepsilon}{3}$ for $n \geq N_\varepsilon$. Now apply
Lemma \ref{algebraic} to $\Sigma_n(w)$, where $n$ is any fixed integer greater than $N_\varepsilon$ and $|h|$, to get a $K_\varepsilon\in\IN$ such that
$k\geq K_\varepsilon$ implies $\| S_{\alpha(k)}^*\Sigma_n(w) U^{-h} S_{\alpha(k)}-\Phi_h(\Sigma_n(w))(\alpha)1\|<\frac{\varepsilon}{3}$.
\end{proof}

\begin{thm}\label{normalO2}
If $w$ is a unitary in $\CQ_2^{\IT}$ such that $w \CO_2w^*\subset\CO_2$  
 then $w\in\CF_2$ and the inclusion is actually a set equality.
\end{thm}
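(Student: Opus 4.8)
The plan is to put $\CQ_2$ in its canonical representation on $\ell_2(\IZ)$ and to work with the splitting $\ell_2(\IZ)=\CH_+\oplus\CH_-$ into the half–spaces $\CH_+=\clsp\{e_k:k\ge 0\}$ and $\CH_-=\clsp\{e_k:k<0\}$, both of which are invariant under $\CO_2$; let $P_\pm$ denote the orthogonal projections onto $\CH_\pm$. Observe first that the claimed set equality is automatic once we know $w\in\CF_2$, since then $w^*\in\CF_2\subset\CO_2$ and hence $w\CO_2w^*=\CO_2$. So the whole task is to prove $w\in\CF_2$, and I would do this in two steps: $(a)$ show that $w$ is block diagonal, i.e.\ $wP_+=P_+w$; $(b)$ show that every block–diagonal unitary of $\CQ_2^\IT$ already lies in $\CF_2$.

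For step $(a)$: given $x\in\CO_2$ the element $wxw^*$ again lies in $\CO_2$, hence leaves $\CH_+$ invariant, so $[wxw^*,P_+]=0$, and therefore $w^*P_+w$ belongs to the commutant of $\CO_2$ in $\CB(\ell_2(\IZ))$. Since $\CD_2\subset\CO_2$ and $\CD_2$ generates the maximal abelian von Neumann subalgebra $\ell_\infty(\IZ)$, that commutant is contained in $\ell_\infty(\IZ)$, so $w^*P_+w$ is a \emph{diagonal} projection, say onto $\clsp\{e_k:k\in A\}$. Being in the commutant it also commutes with $S_1$ and $S_2$, and the identities $S_1e_k=e_{2k+1}$, $S_2e_k=e_{2k}$ force $k\in A\iff 2k\in A\iff 2k+1\in A$; descending along $k\mapsto\lfloor k/2\rfloor$ to its fixed points $0$ and $-1$ one gets $A\in\{\emptyset,\IZ_{\ge 0},\IZ_{<0},\IZ\}$, i.e.\ $w^*P_+w\in\{0,P_+,P_-,1\}$. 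As $w^*P_+w$ is a nonzero proper projection it equals $P_+$ (which is what we want) or $P_-$. In the latter case $w$ would interchange $\CH_+$ and $\CH_-$, so $\langle we_n,e_m\rangle=0$ whenever $n,m$ have the same sign; then each Fourier component $\Phi_d(w)U^d$ of $w$ — which is just the compression of $w$ to the $d$-th matrix diagonal — would carry at most $|d|$ nonzero entries, hence be compact, hence vanish because $\CQ_2\cap\CK(\ell_2(\IZ))=0$ by simplicity; the Fej\'er-type theorem would then give $w=\lim_n\Sigma_n(w)=0$, which is absurd. Thus $wP_+=P_+w$.

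For step $(b)$: if $w\in\CQ_2^\IT$ is block diagonal then so is each Fourier component $\Phi_d(w)U^d$, and inspecting its action on the basis shows that for $d>0$ the function $\Phi_d(w)\in\CD_2\cong C(\{1,2\}^\IN)$ must vanish at the finitely many ``integers'' $0,1,\dots,d-1$ (and symmetrically at $-1,\dots,-|d|$ for $d<0$). Approximating $\Phi_d(w)$ uniformly by locally constant functions that still vanish on that finite set, one rewrites the approximants multiplied by $U^d$ as finite sums $\sum_r c_r\,S_{\gamma_r}S_{\beta_r}^*$ with $|\gamma_r|=|\beta_r|$ — elements of $\CF_2$ — so $\Phi_d(w)U^d\in\CF_2$ for every $d$; since each $\Sigma_n(w)$ is a finite combination of the $\Phi_d(w)U^d$ and $\Sigma_n(w)\to w$ in norm, we conclude $w\in\CF_2$, which finishes the argument.

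The step I expect to be the real obstacle is $(b)$: it is precisely the assertion $\CF_2=\CQ_2^\IT\cap\{x:xP_+=P_+x\}$, and here one has to pin down the fine interplay between the UHF algebra $\CF_2$ and the ambient Bunce–Deddens algebra and, crucially, make do without any conditional expectation onto $\CO_2$ (there is none), leaning instead on the Fej\'er summation available inside $\CQ_2^\IT$. A more computational alternative to the dichotomy exploited in $(a)$–$(b)$ is afforded by the localization Lemmas \ref{algebraic}–\ref{nonzerolimit}, which detect directly, through the limits $S_{\alpha(k)}^*(\,\cdot\,)S_{\alpha(k)}$, whether a Fourier component of $w$ obstructs membership in $\CF_2$.
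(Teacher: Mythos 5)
Your proof is correct, but it takes a genuinely different route from the one in the paper, so it is worth comparing the two. The paper argues ``algebraically'': Lemma \ref{nonzerolimit} produces $h\in\IZ$ and $\alpha$ with $S_{\alpha(k)}^*wU^{-h}S_{\alpha(k)}\to\lambda 1$, $\lambda\neq 0$; the identity $\Ad(w)\upharpoonright_{\CO_2}=\lambda_{w\varphi(w)^*}$ forces $w\varphi^k(w^*)\in\CF_2$ for all $k$; passing to the limit in $S_{\alpha(k)}^*w\varphi^k(w^*)S_{\beta(k)}$ gives $w=w_0U^{l}$ with $w_0\in\CF_2$, and finally $l=0$ because $U^{l}S_1U^{-l}\notin\CO_2$ for $l\neq 0$. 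Your argument is spatial: step $(a)$ is in substance Proposition \ref{inclusion} together with Lemma \ref{noexchange} (your compact--thin--diagonal argument is a valid $\CQ_2^\IT$-internal substitute for the latter, using $\CQ_2\cap\CK=0$), and it places $w$ in $\CO_2''\cap\CQ_2^\IT$; step $(b)$ then proves the stronger structural identity $\CO_2''\cap\CQ_2^\IT=\CF_2$, which is exactly the gauge-invariant case of the question the paper leaves open (whether $\CO_2''\cap\CQ_2=\CO_2$). I checked the crux of $(b)$: for $d>0$, $|\gamma|=n$ with $2^n>d$, one has $P_\gamma U^{d}=S_\gamma S_{\gamma'}^*\in\CF_2$ precisely when the integer $m_\gamma$ encoded by $\gamma$ satisfies $m_\gamma\geq d$, i.e.\ precisely when the cylinder $[\gamma]$ misses the points of $K$ corresponding to $0,\dots,d-1$ (similarly for $d<0$ with $-1,\dots,-|d|$); combined with the continuity of $\Phi_d(w)$, its vanishing at those points, and Fej\'er summation, this does yield $w\in\CF_2$. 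What each approach buys: the paper's proof is shorter once Lemmas \ref{algebraic}--\ref{nonzerolimit} are in place and avoids any analysis of $\CO_2''$; yours is longer but delivers the extra dividend that \emph{every} gauge-invariant element of $\CQ_2$ commuting with $E_\pm$ already lies in $\CF_2$ (no unitarity or normalizing hypothesis needed in step $(b)$), a fact of independent interest for the regularity discussion in Section 7.
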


\begin{proof}
Thanks to Lemma \ref{nonzerolimit} there exist $h\in\IZ$ and an infinite multi-index $\alpha\in\{1,2\}^\IN$
 such that $S_{\alpha(k)}^*w U^{-h} S_{\alpha(k)}$ converges to $\lambda 1$ with 
$\lambda\neq 0$ when $k$ goes to infinity.\\
Since $\Ad(w)\upharpoonright_{\CO_2}=\lambda_{w\varphi(w)^*}$, the unitary $w\varphi(w)^*$ is an element of $\CF_2$ and such is $w\varphi^k(w^*)$, for every $k\geq 1$.
Now there exist $l=l(h,\alpha)\in\IZ$ and an infinite multi-index $\beta$ such that 
$U^{-h}S_{\alpha(k)}=S_{\beta(k)}U^{-l}$, i.e. 
$U^hS_{\beta(k)}=S_{\alpha(k)}U^l$, 
for $k$ sufficiently large. Since we have
\begin{align*}
\CF_2\ni S_{\alpha(k)}^* w \varphi^k(w^*)S_{\beta(k)} & = S_{\alpha(k)}^* wU^{-h}U^h \varphi^k(w^*)S_{\beta(k)}=S_{\alpha(k)}^* wU^{-h}U^h S_{\beta(k)}w^*\\
& = S_{\alpha(k)}^*wU^{-h}S_{\alpha(k)}U^l w^*\underset{k\rightarrow\infty} {\rightarrow} \lambda U^l w^*
\end{align*}
we see that $U^lw^*=w_0^*\in \CF_2$. In other terms, $w=w_0U^{l}$. The proof will be completed as soon as we show that
$l$ must be zero. But from the inclusion $w_0U^{l}\CO_2 U^{-l}w_0^*\subset \CO_2$ we get
$U^{l}\CO_2U^{-l}\subset w_0^*\CO_2w_0=\CO_2$, which is possible only if $l=0$, for
$U^{l}S_1U^{-l}$ is never in $\CO_2$ unless $l=0$.
\end{proof}

In particular, with $u\doteq w\varphi(w)^*$, one has $\lambda_u(\CF_2)=\CF_2$ and $\lim_k \varphi^k(w)w^*\varphi(w)\varphi^k(w^*)=w^*\varphi(w)\in\CF_2$ in norm, also cf.
Theorem \ref{characterization}.

\subsection{Some consequences}

We can also derive the following results.
\begin{corollary}
The intersection $N_{\CD_2}(\CQ_2)\cap N_{\CO_2}(\CQ_2)$ reduces to $N_{\CD_2}(\CO_2) (= \CU(\CD_2) \cdot \CS_2)$.
\end{corollary}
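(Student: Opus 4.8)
The approach is to reduce everything, via the theorem describing $N_{\CD_2}(\CQ_2)$, to the question of when a unitary of the form $P=\sum_{i=1}^N S_{\alpha_i}U^{k_i}S_{\beta_i}^*$ (with $\sum_i S_{\alpha_i}S_{\alpha_i}^*=\sum_i S_{\beta_i}S_{\beta_i}^*=1$) already lies in $\CO_2$. The inclusion $N_{\CD_2}(\CO_2)\subseteq N_{\CD_2}(\CQ_2)\cap N_{\CO_2}(\CQ_2)$ is immediate: any $u\in\CU(\CD_2)\cdot\CS_2$ is a unitary of $\CO_2$, so it normalizes $\CO_2$ inside $\CQ_2$, and it normalizes $\CD_2$ inside $\CQ_2$ because it does so inside $\CO_2$ and $\CD_2\subseteq\CO_2\subseteq\CQ_2$ is one and the same subalgebra. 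For the reverse inclusion, take a unitary $v\in N_{\CD_2}(\CQ_2)\cap N_{\CO_2}(\CQ_2)$ and write $v=dP$ with $d\in\CU(\CD_2)$ and $P$ a permutative unitary of the above shape. Since $d\in\CU(\CD_2)\subseteq\CU(\CO_2)$, the unitary $P=d^*v$ still normalizes $\CO_2$, and it normalizes $\CD_2$ by Proposition \ref{unitaryproj}; hence it is enough to prove $P\in\CO_2$, for then $P\in\CS_2$ and $v=dP\in\CU(\CD_2)\cdot\CS_2=N_{\CD_2}(\CO_2)$.

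To prove $P\in\CO_2$ I would work in the canonical representation, set $\CH_+\doteq\clsp\{e_k:k\geq 0\}$ and $\CH_-\doteq\clsp\{e_k:k<0\}$ (both $\CO_2$-invariant), and first show that $P$ maps each of $\CH_\pm$ onto itself. Since $P$ is permutative, $P^*\CH_+$ is the coordinate subspace $\ell_2(B)$ with $B=\{k:Pe_k\in\CH_+\}$; because $P$ normalizes $\CO_2$ and $\CH_\pm$ are $\CO_2$-invariant, both $\ell_2(B)$ and its orthocomplement $P^*\CH_-$ are $\CO_2$-invariant, so $\ell_2(B)$ reduces $\CO_2$, whence $S_i\ell_2(B)\subseteq\ell_2(B)$ and $S_i^*\ell_2(B)\subseteq\ell_2(B)$ for $i=1,2$. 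A direct computation then shows this is equivalent to $n\in B\Leftrightarrow\lfloor n/2\rfloor\in B$ for all $n\in\IZ$, and iterating, $B$ depends only on whether $0\in B$ and whether $-1\in B$; thus $P^*\CH_+$ is one of $0,\CH_+,\CH_-,\ell_2(\IZ)$. On the other hand, a straightforward computation (cf. the proof of Lemma \ref{orthonormal}) shows that on its source coordinate set $A_{\beta_i}$ each monomial $S_{\alpha_i}U^{k_i}S_{\beta_i}^*$ acts on the basis by $e_m\mapsto e_{a_im+b_i}$ with $a_i>0$, so $Pe_m\in\CH_+$ for all but finitely many $m\geq 0$; this rules out $P^*\CH_+\in\{0,\CH_-\}$, and $P^*\CH_+\neq\ell_2(\IZ)$ as $P$ is unitary, so $P^*\CH_+=\CH_+$ and $P^*\CH_-=\CH_-$. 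Finally, if some $k_{i_0}\neq 0$ — say $k_{i_0}\geq 1$, after replacing $P$ by $P^*$ if necessary, which preserves $\CH_\pm$ just as well — choose the unique $k$ with $S_{\beta_{i_0}}^*e_k=e_{-1}$; as the pure word $S_{\beta_{i_0}}$ preserves $\CH_-$, this $k$ is negative, while $Pe_k=S_{\alpha_{i_0}}U^{k_{i_0}}e_{-1}=S_{\alpha_{i_0}}e_{k_{i_0}-1}\in\CH_+$ since $k_{i_0}-1\geq 0$ and the pure word $S_{\alpha_{i_0}}$ preserves $\CH_+$; this contradicts that $P$ preserves $\CH_-$. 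Hence all $k_i=0$, $P=\sum_i S_{\alpha_i}S_{\beta_i}^*\in\CS_2$, and we are done. (This last step is exactly the argument used in the remark above to detect when $P\in\CO_2$.)

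I expect the middle step — showing that $P$ fixes $\CH_+$ and $\CH_-$, rather than merely permuting this pair of $\CO_2$-reducing subspaces — to be the crux. The classification of $\CO_2$-invariant coordinate subspaces of $\ell_2(\IZ)$ is elementary, but excluding the ``flip'' $P^*\CH_+=\CH_-$ genuinely requires that $P$ be a \emph{finite} sum of monomials; an arbitrary unitary of $\CB(\ell_2(\IZ))$ normalizing both $\CD_2$ and $\CO_2$ could well interchange $\CH_+$ and $\CH_-$.
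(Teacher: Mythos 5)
Your proof is correct, but it takes a genuinely different route from the paper's. The paper factors a unitary $u$ in the intersection as $u=u_1u_2$ with $u_1\in N_{\CD_2}(\CQ_2^\IT)$ and $u_2\in\CS_2\subset N_{\CD_2}(\CO_2)$ (Remark \ref{decomp}), observes that $u_1\CO_2u_1^*\subseteq\CO_2$ because $u$ and $u_2$ both normalize $\CO_2$, and then invokes Theorem \ref{normalO2} to conclude $u_1\in\CF_2$, hence $u\in\CO_2$; the whole weight of the argument is thus carried by the analytic machinery behind Theorem \ref{normalO2} (Fej\'er sums and the limit formula of Lemma \ref{nonzerolimit}). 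You instead work with the $v=dP$ factorization and argue spatially in the canonical representation: a permutative unitary $P=\sum_i S_{\alpha_i}U^{k_i}S_{\beta_i}^*$ normalizing $\CO_2$ must permute the $\CO_2$-reducing subspaces $\{0,\CH_+,\CH_-,\ell_2(\IZ)\}$ (equivalently, the minimal projections of $\CO_2'=\IC E_+\oplus\IC E_-$, which the paper records in Section 7), the affine form $e_m\mapsto e_{a_im+b_i}$ with $a_i>0$ of each monomial excludes the flip $P\CH_+=\CH_-$, and then the argument of the remark following the normalizer theorem forces all $k_i=0$. Your exclusion of the flip is in effect a special, easier case of Lemma \ref{noexchange}, which the paper only introduces later to prove non-regularity of $\CO_2\subset\CQ_2$. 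What each approach buys: the paper's proof reuses Theorem \ref{normalO2}, which it needs anyway for the subsequent corollaries about $N_{\CF_2}$ and $N_{\CD_2}(\CQ_2^\IT)\cap N_{\CO_2}(\CQ_2)$, and in particular it pins down the Bunce--Deddens factor $u_1$ as lying in $\CF_2$; yours is more elementary and self-contained, needing only the structure theorem for $N_{\CD_2}(\CQ_2)$, Proposition \ref{unitaryproj} (for the orthogonality of the $P_{\beta_i}$, which justifies isolating the single surviving term in $Pe_k$), and explicit computations in the canonical representation. Both are complete proofs of the corollary.
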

\begin{proof}
As the inclusion  $N_{\CD_2}(\CO_2)\subset N_{\CD_2}(\CQ_2)\cap N_{\CO_2}(\CQ_2)$ is trivial, we only need to prove that any unitary $u$ in the intersection  $N_{\CD_2}(\CQ_2)\cap N_{\CO_2}(\CQ_2)$ actually lies in the Cuntz algebra $\CO_2$.
Since our unitary $u$ sits in particular in $N_{\CD_2}(\CQ_2)$, it decomposes as $u=u_1u_2$ with
$u_1\in N_{\CD_2}(\CQ_2^\mathbb{T})$ and $u_2\in N_{\CD_2}(\CO_2)$ by virtue of Remark \ref{decomp}. 
But because $u$ also normalizes $\CO_2$, we find that $u_1(u_2xu_2^*)u_1^*$ continues to be in $\CO_2$ if $x$ is in
$\CO_2$. In other terms, $u_1$ is an element of the Bunce-Deddens algebra such that $u_1\CO_2u_1^*\subset\CO_2$.
This being the case,  Theorem \ref{normalO2} applies showing that $u_1$ lies in fact in $\CF_2$. But then $u$ must  be in
$\CO_2$ too as the product of two unitaries both  lying in $\CO_2$.
\end{proof}

\begin{corollary} \label{normD2}
We have
$$ N_{\CD_2}(\CQ_2^\IT) \cap N_{\CO_2}(\CQ_2) = N_{\CD_2}(\CF_2) (= \CU(\CD_2) \cdot \CP_2) \ . $$
\end{corollary}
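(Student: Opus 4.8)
The plan is to deduce this identity almost immediately from Theorem~\ref{normalO2}, the only genuine work having already been carried out there. So I would present it as a short corollary rather than a standalone argument.

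First I would dispose of the trivial inclusion. Since $\CF_2\subset\CO_2\subset\CQ_2^{\IT}$, any unitary $u\in N_{\CD_2}(\CF_2)$ belongs to $\CU(\CQ_2^{\IT})$ and normalizes $\CD_2$, hence $u\in N_{\CD_2}(\CQ_2^{\IT})$; moreover $u\in\CU(\CO_2)$, so $u\CO_2 u^*=\CO_2$ and therefore $u\in N_{\CO_2}(\CQ_2)$. This gives $N_{\CD_2}(\CF_2)\subset N_{\CD_2}(\CQ_2^{\IT})\cap N_{\CO_2}(\CQ_2)$. For the reverse inclusion, I would take $u$ in the intersection: being in $N_{\CD_2}(\CQ_2^{\IT})$ it is in particular a unitary of $\CQ_2^{\IT}$, and being in $N_{\CO_2}(\CQ_2)$ it satisfies $u\CO_2 u^*\subset\CO_2$. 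Theorem~\ref{normalO2} then forces $u\in\CF_2$, and since $u$ also normalizes $\CD_2$ we conclude $u\in N_{\CD_2}(\CF_2)$. Combining the two inclusions yields $N_{\CD_2}(\CQ_2^{\IT})\cap N_{\CO_2}(\CQ_2)=N_{\CD_2}(\CF_2)$.

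The last thing to address is the parenthetical identification $N_{\CD_2}(\CF_2)=\CU(\CD_2)\cdot\CP_2$, where $\CP_2\doteq\CS_2\cap\CF_2$ is the group of permutative ``level'' unitaries of the UHF algebra. A unitary of $\CF_2$ normalizing $\CD_2$ lies a fortiori in $N_{\CD_2}(\CO_2)=\CU(\CD_2)\cdot\CS_2$, so it can be written $u=ds$ with $d\in\CU(\CD_2)$ and $s\in\CS_2$; then $s=d^{-1}u\in\CF_2$, so $s\in\CP_2$, and the reverse containment $\CU(\CD_2)\cdot\CP_2\subset N_{\CD_2}(\CF_2)$ is immediate. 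Here the only point requiring (minor) care is that an element of $\CS_2$ lying in $\CF_2$ is exactly a finite sum $\sum_i S_{\alpha_i}S_{\beta_i}^*$ with all $|\alpha_i|=|\beta_i|$ inducing a permutation of $W_2^k$: this is read off from the canonical normal form $S_\alpha U^k S_\beta^*$ recalled before Proposition~\ref{unitaryproj} together with gauge invariance, which forces the $U$-powers to vanish and the multi-indices to have matched lengths.

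I do not expect any serious obstacle: the substantive input --- that a unitary of the Bunce--Deddens algebra conjugating $\CO_2$ into itself must already sit in $\CF_2$ --- is precisely Theorem~\ref{normalO2}, and the present statement is just the intersection-theoretic packaging of it. The one place to be slightly careful is to make sure the argument is not circular, i.e. that the description of $N_{\CD_2}(\CF_2)$ is obtained from the known $N_{\CD_2}(\CO_2)$ (or cited directly) and not used to prove Theorem~\ref{normalO2}; this is the case, since Theorem~\ref{normalO2} was proved independently via the asymptotic Fej\'er estimates of Lemmas~\ref{algebraic} and~\ref{nonzerolimit}.
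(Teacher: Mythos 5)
Your argument is correct and is exactly the intended one: the paper states this corollary without proof precisely because, for a unitary already known to lie in $\CQ_2^\IT$, Theorem~\ref{normalO2} applies directly (no appeal to the decomposition of Remark~\ref{decomp} is needed, unlike in the preceding corollary), and the parenthetical identification $N_{\CD_2}(\CF_2)=\CU(\CD_2)\cdot\CP_2$ is the known Power-type fact you correctly read off from $N_{\CD_2}(\CO_2)=\CU(\CD_2)\cdot\CS_2$.
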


\begin{remark}
Among other things, the former results provide more information about
$N_{\CO_2}(\CQ_2)$. Notably, it seems to vaguely support our guess that the normalizer $N_{\CO_2}(\CQ_2)$
may be exhausted by $\mathcal{U}(\CO_2)$. At the very least, it certainly settles those unitaries in $N_{\CO_2}(\CQ_2)$
that also leave $\CD_2$ globally invariant.
\end{remark}

The next result gives us an explicit description of the normalizer
$N_{\CQ_2^\IT}(\CQ_2)$, which by definition is the set of those unitaries $w\in\CQ_2$ such that
$w Q_2^\IT w^*=\CQ_2^\IT$.

\begin{proposition}\label{normalizer}
The normalizer $N_{\CQ_2^\IT}(\CQ_2)$ coincides with $\CU(\CQ_2^\IT)$.
\end{proposition}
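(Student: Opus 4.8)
The plan is to prove that $N_{\CQ_2^\IT}(\CQ_2) = \CU(\CQ_2^\IT)$ by showing the non-trivial inclusion: any unitary $w \in \CQ_2$ with $w\CQ_2^\IT w^* = \CQ_2^\IT$ must itself belong to $\CQ_2^\IT$. The obvious strategy is to exploit the gauge action $\widetilde{\alpha_z}$, whose fixed-point algebra is precisely $\CQ_2^\IT$, and to combine it with the previously established facts that $C^*(U)$ is maximal abelian in $\CQ_2$ and that $\CQ_2^\IT = C^*(\CD_2, U)$ carries a conditional expectation onto it.

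First I would observe that $C^*(U) \subset \CQ_2^\IT$ and that $w C^*(U) w^*$ is a maximal abelian subalgebra of $w\CQ_2^\IT w^* = \CQ_2^\IT$; moreover $\widetilde{\alpha_z}(U) = U$, so applying a gauge automorphism to the relation $w\CQ_2^\IT w^* = \CQ_2^\IT$ and using that $\widetilde{\alpha_z}$ fixes $\CQ_2^\IT$ pointwise gives $\widetilde{\alpha_z}(w)\,\CQ_2^\IT\,\widetilde{\alpha_z}(w)^* = \CQ_2^\IT$ as well, so that $w^*\widetilde{\alpha_z}(w)$ normalizes $\CQ_2^\IT$. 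The key point is then to show $w^* \widetilde{\alpha_z}(w)$ is actually a \emph{central} unitary of $\CQ_2^\IT$, hence a scalar since $\CQ_2^\IT$, being a Bunce--Deddens algebra, is simple with trivial center; concretely, one checks $w^*\widetilde{\alpha_z}(w)$ commutes with $U$ and with $\CD_2$. Commutation with $U$ follows because $\widetilde{\alpha_z}$ fixes $U$, so $\widetilde{\alpha_z}(w) U \widetilde{\alpha_z}(w)^* = \widetilde{\alpha_z}(wUw^*)$ while $wUw^* \in \CQ_2^\IT$ is gauge-invariant; a parallel argument using that $\CD_2$ is gauge-invariant handles $\CD_2$. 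Therefore $\widetilde{\alpha_z}(w) = c(z)\, w$ for a scalar-valued function $c : \IT \to \IT$, and the cocycle identity $\widetilde{\alpha_{z_1 z_2}} = \widetilde{\alpha_{z_1}}\circ\widetilde{\alpha_{z_2}}$ forces $c$ to be a character of $\IT$, i.e. $c(z) = z^m$ for some $m \in \IZ$.

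It remains to rule out $m \neq 0$ and conclude. If $\widetilde{\alpha_z}(w) = z^m w$ with $m \neq 0$, then $w$ lies in the spectral subspace $\CQ_2^{(m)} = \{x : \widetilde{\alpha_z}(x) = z^m x\}$ of the gauge action. Here I would invoke the analysis of spectral subspaces for $\CQ_2$ (as for $\CO_2$, one has $\CQ_2^{(m)} = \CQ_2^\IT\, (US_2)^{\otimes m}$-type descriptions for $m>0$, and the adjoint for $m<0$), from which no element of $\CQ_2^{(m)}$ with $m\neq 0$ can be unitary, since $w^*w = 1$ forces $w \in \CQ_2^{(0)} = \CQ_2^\IT$. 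Alternatively, and perhaps more cleanly, one can work in the canonical representation on $\ell_2(\IZ)$: the cited ``straightforward argument'' in the proof of the first Proposition (which refers forward to exactly this Proposition \ref{normalizer}) amounts to noting that a unitary $w$ satisfying $\widetilde{\alpha_z}(w) = z^m w$ would, together with $wUw^* = g(U)$, produce an incompatibility via winding-number/Fredholm-index considerations on $\ell_2(\IZ)$ unless $m = 0$. Either way, $m = 0$ gives $\widetilde{\alpha_z}(w) = w$ for all $z$, i.e. $w \in \CQ_2^\IT$, completing the proof.

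\textbf{Main obstacle.} The delicate step is establishing that $w^*\widetilde{\alpha_z}(w)$ is scalar — i.e. verifying it commutes with all of $\CQ_2^\IT$, not merely with $C^*(U)$ — and, once the character $z \mapsto z^m$ has been extracted, ruling out $m \neq 0$. The commutation with $\CD_2$ requires a genuine argument (the conjugate $w\CD_2 w^*$ need not be $\CD_2$, only inside $\CQ_2^\IT$), and the exclusion of nonzero $m$ is where one must bring in either the structure of the gauge spectral subspaces or the canonical representation; I expect this to be the heart of the proof, with everything else being routine cocycle bookkeeping.
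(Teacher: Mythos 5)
Your first half follows the paper's proof almost verbatim: apply a gauge automorphism to $w\CQ_2^\IT w^*=\CQ_2^\IT$, deduce that $w^*\widetilde{\alpha_z}(w)$ commutes with everything in $\CQ_2^\IT$, conclude it is a scalar, and extract a character $z\mapsto z^m$. One small caveat there: "commutes with $U$ and $\CD_2$, hence central in $\CQ_2^\IT$, hence scalar by simplicity of the Bunce--Deddens algebra" is not quite a complete chain, because $w^*\widetilde{\alpha_z}(w)$ is a priori only an element of $\CQ_2$, not of $\CQ_2^\IT$, so trivial center of $\CQ_2^\IT$ does not directly apply. What is needed is the triviality of the \emph{relative} commutant $(\CQ_2^\IT)'\cap\CQ_2$, which the paper cites from earlier work; alternatively you can repair your version by noting that commuting with $C^*(U)$, which is maximal abelian in $\CQ_2$, already forces the element into $C^*(U)\subset\CQ_2^\IT$, after which your trivial-center argument does apply. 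Since you state the maximal abelianness of $C^*(U)$ at the outset, this is a fixable bookkeeping issue rather than a real error.

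The genuine gap is exactly where you predicted it would be: the exclusion of $m\neq 0$, and neither of your two proposed routes closes it. Route (a) asserts that a nonzero spectral subspace $\CQ_2^{(m)}$ contains no unitary, justified by "since $w^*w=1$ forces $w\in\CQ_2^{(0)}$" --- but this is a non sequitur: $w^*w$ always lies in the fixed-point algebra whatever spectral subspace $w$ belongs to (e.g.\ $S_2^*S_2=1$ while $S_2\in\CQ_2^{(1)}$), so unitarity by itself places no constraint on the degree of $w$. The statement you want is true, but its proof is precisely the missing argument. Route (b) is circular: the "straightforward argument" in the first proposition of Section~3 refers \emph{forward} to this very proposition for the claim $k=0$, and your appeal to winding-number or Fredholm-index considerations is not actually carried out (nor is $wUw^*=g(U)$ available in the present setting, only $wUw^*\in\CQ_2^\IT$). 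The paper closes the gap with a short but essential thermodynamic argument: the gauge dynamics $\widetilde\alpha_t$ (with $\widetilde\alpha_t$ corresponding to $2^{it}$) admits a KMS state $\omega$ at inverse temperature $\beta=1$, and comparing $\omega(w\widetilde\alpha_t(w^*))=e^{-ikt}$ with the KMS identity $\omega(w\widetilde\alpha_t(w^*))=\omega(\widetilde\alpha_{t+i\beta}(w^*)w)=e^{k\beta}e^{-ikt}$ forces $e^{k}=1$, i.e.\ $k=0$. Some such global input (a KMS/quasi-invariant state argument, or an honest analysis of the spectral subspaces) is indispensable here and is absent from your proposal.
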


\begin{proof}
As the inclusion $\CU(\CQ_2^\IT)\subset N_{\CQ_2^\IT}(\CQ_2)$ is obvious, we only need to prove the reverse inclusion.
If $w\in\CQ_2$ is in $N_{\CQ_2^\IT}(\CQ_2)$, then $wxw^*\in\CQ_2^\IT$ for every $x\in\CQ_2^\IT$, which means
$\tilde\alpha_t(wxw^*)=wxw^*$ for every $t\in\IR$, where $\tilde\alpha_t$ is the gauge automorphism
of $\CQ_2$ corresponding to $2^{it}$. In particular, we find that $w^*\tilde\alpha_t(w)x=xw^*\tilde\alpha_t(w)$ for every $x\in\CQ_2^\IT$, whence
$w^*\tilde\alpha_t(w)=\chi(t)1$ as the relative commutant $(\CQ_2^\IT)' \cap \CQ_2$ is trivial, see \cite[Corollary 3.13]{ACR}.
In other terms, we get that the equality $\alpha_t(w)=\chi(t)w$  holds for every $t\in\IR$, where $\chi$ is immediately
seen to be a character of $\IR$, i.e. $\chi(t)=e^{ikt}$ for some $k\in\IZ$. The proof will then be completed once $k$ is shown to be
zero. This goal can in turn be accomplished by using the $\beta$-KMS state $\omega$ associated with $\{\tilde\alpha_t: t\in\IR\}$, where
$\beta$ is actually $1$, see \cite[Prop. 4.2]{CuntzQ}.  
Indeed, on the one hand we have $\omega(w\tilde\alpha_t(w^*))=e^{-ikt}$, but on the other hand 
$\omega(w\tilde\alpha_t(w^*))=\omega(\tilde\alpha_{t+i\beta}(w^*)w)=e^{k\beta} e^{-ikt}$, which
forces $e^{k\beta}$ to equal $1$, i.e. $k=0$ as $\beta=1\neq 0$.
\end{proof}

At this point Theorem \ref{normalO2} can be reformulated in a slightly more intrinsic way. 
\begin{corollary}\label{inner}
Let $\alpha$ be an inner automorphism of $\CQ_2$ such that $\alpha(\CQ_2^\IT)=\CQ_2^\IT$  and $\alpha(\CO_2)\subset\CO_2$, then
$\alpha$ is the canonical extension of an inner automorphism of $\CF_2$.
\end{corollary}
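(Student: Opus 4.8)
The plan is to deduce Corollary \ref{inner} directly from Theorem \ref{normalO2} together with Proposition \ref{normalizer} and the description of $\mathrm{Ad}(w)\upharpoonright_{\CO_2}$ in terms of the Cuntz--Takesaki correspondence. Write $\alpha=\mathrm{Ad}(w)$ for some unitary $w\in\CQ_2$; this is legitimate since $\CQ_2$ is simple and unital, so every inner automorphism comes from a genuine unitary. The hypothesis $\alpha(\CQ_2^\IT)=\CQ_2^\IT$ says precisely that $w$ normalizes $\CQ_2^\IT$, so Proposition \ref{normalizer} applies and gives $w\in\CU(\CQ_2^\IT)$. Now the second hypothesis $\alpha(\CO_2)\subset\CO_2$ reads $w\CO_2 w^*\subset\CO_2$ with $w$ a unitary in the Bunce--Deddens subalgebra, which is exactly the situation covered by Theorem \ref{normalO2}. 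Hence $w\in\CF_2$ and moreover $w\CO_2 w^*=\CO_2$.

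Next I would identify the restriction. Since $w\in\CF_2\subset\CO_2$, conjugation by $w$ maps $\CO_2$ onto $\CO_2$, so $\alpha\upharpoonright_{\CO_2}=\mathrm{Ad}(w)\upharpoonright_{\CO_2}$ is an \emph{automorphism} of $\CO_2$, not merely an endomorphism, and it is inner in $\CO_2$ because it is implemented by the unitary $w\in\CO_2$. To see it is actually (the canonical extension of) an inner automorphism of $\CF_2$, recall $\CF_2$ is globally invariant: $w\in\CF_2$ and $\CF_2$ is a $C^*$-subalgebra, so $w\CF_2 w^*=\CF_2$, and $\mathrm{Ad}(w)\upharpoonright_{\CF_2}$ is an inner automorphism of $\CF_2$. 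Finally, $\alpha=\mathrm{Ad}(w)$ is the canonical extension to $\CQ_2$ of this inner automorphism of $\CF_2$ in the sense that $w$ is the \emph{same} unitary: the inner automorphism of $\CF_2$ implemented by $w$, when one allows $w$ to act on all of $\CQ_2$, recovers $\alpha$. This is precisely the assertion of the corollary.

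The only point requiring a word of care is the meaning of ``canonical extension of an inner automorphism of $\CF_2$'': one should check that an inner automorphism $\mathrm{Ad}(u)$ of $\CF_2$, with $u\in\CU(\CF_2)$, does extend to an (inner) automorphism of $\CQ_2$ — but this is immediate, since $u$ is a unitary of $\CQ_2$ and $\mathrm{Ad}(u)$ is globally defined on $\CQ_2$, restricting back to $\mathrm{Ad}(u)$ on $\CF_2$ by construction. Thus there is genuinely nothing to prove beyond bookkeeping once Theorems \ref{normalO2} and Proposition \ref{normalizer} are in hand; I would present the argument in essentially the three lines above, emphasizing that the combination of the two global-invariance hypotheses is exactly what pins $w$ down to lie in $\CF_2$.

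I do not expect any serious obstacle here: the substantive work has been done in Theorem \ref{normalO2} (which handles the hard analytic estimates via Lemmas \ref{algebraic} and \ref{nonzerolimit}) and in Proposition \ref{normalizer} (which uses the KMS argument to eliminate the gauge-twisting). If anything, the one thing to be vigilant about is making sure that $\alpha$ being \emph{inner} as an automorphism of $\CQ_2$ is used only to produce the implementing unitary $w$, and that the two invariance conditions are then fed to the two earlier results in the right order — Proposition \ref{normalizer} first to land $w$ in $\CQ_2^\IT$, then Theorem \ref{normalO2} to push it down into $\CF_2$. With that ordering the proof is a clean two-step reduction.
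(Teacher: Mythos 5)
Your proof is correct and follows exactly the route the paper intends: Proposition \ref{normalizer} places the implementing unitary $w$ in $\CU(\CQ_2^\IT)$, and Theorem \ref{normalO2} then forces $w\in\CF_2$, so that $\alpha=\Ad(w)$ is the canonical extension of the inner automorphism $\Ad(w)\upharpoonright_{\CF_2}$. The paper compresses this to a one-line remark, but your two-step reduction is precisely the intended argument.
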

\begin{proof}
A straightforward application of Proposition \ref{normalizer}.
\end{proof}

It seems of interest also to determine whether the only unitaries in $\CQ_2^{\IT}$ normalizing $\CF_2$ (or even mapping $\CF_2$ into $\CF_2$) are those in $\CF_2$.
Unfortunately, we do no to have an answer yet. At any rate, we do have the following result, whose proof is omitted as 
it can be done in exactly the same way as in  Proposition \ref{normalizer} by taking account of the equality $\CF_2'\cap\CQ_2=\IC1$, see \cite[Theorem 3.11]{ACR}.

\begin{proposition}\label{normF2}
The normalizer  $N_{\CF_2}(\CQ_2)$ is equal to $N_{\CF_2}(\CQ_2^\IT)$.  
\end{proposition}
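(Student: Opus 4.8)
The plan is to mimic the proof of Proposition \ref{normalizer} almost verbatim, replacing the pair $(\CQ_2^\IT, \CQ_2)$ by $(\CF_2, \CQ_2^\IT)$ and using the relative commutant computation $\CF_2'\cap\CQ_2=\IC 1$ from \cite[Theorem 3.11]{ACR}. Since the inclusion $N_{\CF_2}(\CQ_2^\IT)\subset N_{\CF_2}(\CQ_2)$ is trivial, the content is the reverse inclusion: a unitary $w\in\CQ_2$ with $w\CF_2 w^*=\CF_2$ must actually lie in $\CQ_2^\IT$. First I would observe that since $\CF_2$ is the fixed-point algebra of the gauge action $\{\widetilde\alpha_t\}$ restricted to $\CO_2$ — more precisely, $\CF_2 = \CO_2\cap\CQ_2^\IT = (\CQ_2)^{\widetilde\alpha}\cap\CO_2$, and in any case $\CF_2$ is $\widetilde\alpha_t$-invariant with $\widetilde\alpha_t|_{\CF_2}=\id$ — one gets for every $x\in\CF_2$ that $\widetilde\alpha_t(wxw^*) = wxw^*$ because $wxw^*\in\CF_2$. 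Expanding, $\widetilde\alpha_t(w)\,x\,\widetilde\alpha_t(w)^* = wxw^*$, hence $w^*\widetilde\alpha_t(w)$ commutes with every $x\in\CF_2$, so $w^*\widetilde\alpha_t(w)\in\CF_2'\cap\CQ_2=\IC 1$.

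Next I would write $w^*\widetilde\alpha_t(w)=\chi(t)1$ with $|\chi(t)|=1$; the cocycle/one-parameter-group identity $\widetilde\alpha_{s+t}=\widetilde\alpha_s\circ\widetilde\alpha_t$ forces $\chi$ to be a continuous character of $\IR$, so $\chi(t)=e^{ikt}$ for some real $k$. The relation becomes $\widetilde\alpha_t(w)=e^{ikt}w$ for all $t$. Now I would differentiate, or more elementarily just note that this says $w$ is an eigenvector of the generator of the gauge action; since the gauge action on $\CQ_2$ has integer spectral subspaces (the spectral subspace decomposition of $\CQ_2$ under $\widetilde\alpha$ is indexed by $\IZ$, with the $\CQ_2^\IT$ being the degree-zero part), $k$ must be an integer. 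To conclude $k=0$, i.e. $w\in\CQ_2^\IT$, I would repeat the KMS argument from Proposition \ref{normalizer}: with $\omega$ the unique $1$-KMS state for $\{\widetilde\alpha_t\}$ from \cite[Prop. 4.2]{CuntzQ}, compute $\omega(w\,\widetilde\alpha_t(w^*)) = \omega(w\cdot e^{-ikt}w^*)=e^{-ikt}$ on one side, and via the KMS condition $\omega(w\,\widetilde\alpha_t(w^*))=\omega(\widetilde\alpha_{t+i}(w^*)\,w)=e^{k}e^{-ikt}$ on the other, forcing $e^k=1$ and hence $k=0$. This is exactly the step the paper says to omit as ``the same as in Proposition \ref{normalizer}''.

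The main (minor) obstacle is justifying the integrality of $k$: one needs that a continuous character $e^{ikt}$ of $\IR$ can arise as $w^*\widetilde\alpha_t(w)$ only for $k\in\IZ$, i.e. that $\widetilde\alpha$ genuinely descends to an action of $\IT=\IR/2\pi\IZ$ on $\CQ_2$. This is immediate from the definition of the gauge action on $\CQ_2$ (it is defined as an $\IT$-action, with $\widetilde\alpha_t$ corresponding to $e^{it}\in\IT$, or to $2^{it}$ depending on the parametrization used in \cite{CuntzQ}), so $\widetilde\alpha_{2\pi}=\id$ forces $e^{2\pi i k}=1$. Once this is in hand the KMS computation closes the argument with no further difficulty, so I do not expect any serious hurdle; the proposition is genuinely a corollary of the techniques already deployed.
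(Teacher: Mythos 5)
Your argument is correct and is precisely the proof the paper intends: the authors omit it, stating that it is done ``in exactly the same way as in Proposition \ref{normalizer}'' using $\CF_2'\cap\CQ_2=\IC 1$, which is exactly the gauge-invariance-plus-relative-commutant-plus-KMS argument you carry out. The only extra material is your discussion of the integrality of $k$, which is harmless but not actually needed, since the KMS identity $e^{k\beta}=1$ with $\beta=1$ already forces $k=0$ for any real $k$.
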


We now state a result that describes the intersection $N_{\CD_2}(\CQ_2) \cap N_{\CF_2}(\CQ_2) \cap N_{\CO_2}(\CQ_2)$, for
which we need not know what the three normalizers separately are.

\begin{corollary} One has
$$N_{\CD_2}(\CQ_2) \cap N_{\CF_2}(\CQ_2) \cap N_{\CO_2}(\CQ_2) = N_{\CD_2}(\CF_2)=N_{\CD_2}(\CO_2)\cap N_{\CF_2}(\CO_2)$$
\end{corollary}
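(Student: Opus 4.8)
The plan is to deduce the two equalities from the normalizer descriptions already obtained inside $\CQ_2$, the only substantive ingredients being Proposition \ref{normF2} and Theorem \ref{normalO2}; everything else is a formal assembly. I would begin by disposing of the routine inclusions. If $u$ lies in $N_{\CD_2}(\CF_2)$, then $u$ is a unitary of $\CF_2\subset\CO_2\subset\CQ_2$ that normalizes $\CD_2$, and conjugation by $u$ plainly leaves $\CF_2$ and $\CO_2$ globally invariant (conjugation by a unitary of a $C^*$-subalgebra is an automorphism of that subalgebra); hence $u$ belongs to each of $N_{\CD_2}(\CQ_2)$, $N_{\CF_2}(\CQ_2)$, $N_{\CO_2}(\CQ_2)$, and also to $N_{\CD_2}(\CO_2)\cap N_{\CF_2}(\CO_2)$. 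Conversely a unitary in $N_{\CD_2}(\CO_2)\cap N_{\CF_2}(\CO_2)$ sits inside $\CO_2$, so it automatically normalizes $\CO_2$ and hence lies in the triple intersection $N_{\CD_2}(\CQ_2)\cap N_{\CF_2}(\CQ_2)\cap N_{\CO_2}(\CQ_2)$. Thus it remains only to show that the triple intersection is contained in $N_{\CD_2}(\CF_2)$.

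For this, let $u$ be a unitary in $N_{\CD_2}(\CQ_2)\cap N_{\CF_2}(\CQ_2)\cap N_{\CO_2}(\CQ_2)$. Since $u$ normalizes $\CF_2$, Proposition \ref{normF2} forces $u\in\CU(\CQ_2^\IT)$. Since $u$ also normalizes $\CO_2$, we have in particular $u\CO_2u^*\subseteq\CO_2$, so Theorem \ref{normalO2} applies to $u$ and yields $u\in\CF_2$. Finally $u$ normalizes $\CD_2$, so $u\in\CF_2$ with $u\CD_2u^*=\CD_2$, i.e. $u\in N_{\CD_2}(\CF_2)$; this proves the first equality. The second equality is then immediate: a unitary $u\in N_{\CD_2}(\CO_2)\cap N_{\CF_2}(\CO_2)$ lies in $\CO_2\subset\CQ_2$, hence belongs to $N_{\CD_2}(\CQ_2)$, to $N_{\CF_2}(\CQ_2)$ and (trivially) to $N_{\CO_2}(\CQ_2)$, so by the first equality $u\in N_{\CD_2}(\CF_2)$, while the reverse inclusion was already noted.

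I do not anticipate a genuine obstacle here: the proof is a short bookkeeping argument, and all the real work is carried by the two quoted results, which confine a normalizing unitary first to the Bunce--Deddens algebra $\CQ_2^\IT$ (Proposition \ref{normF2}) and then to its UHF core $\CF_2$ (Theorem \ref{normalO2}). An alternative route for the key inclusion would replace Theorem \ref{normalO2} by the already-established identity $N_{\CD_2}(\CQ_2)\cap N_{\CO_2}(\CQ_2)=N_{\CD_2}(\CO_2)$, which puts $u$ inside $\CO_2$; combined with $u\in\CU(\CQ_2^\IT)$ from Proposition \ref{normF2} and the elementary fact $\CO_2\cap\CQ_2^\IT=\CF_2$ (immediate from $\widetilde{\alpha_\theta}\upharpoonright_{\CO_2}=\alpha_\theta$), this again gives $u\in\CF_2$. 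Either way, once $u$ is forced into $\CF_2$, the conclusion $u\in N_{\CD_2}(\CF_2)$ follows solely from the hypothesis that $u$ normalizes $\CD_2$, and the second equality is a formal corollary of the first.
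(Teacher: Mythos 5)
Your proof is correct and follows essentially the same route as the paper: Proposition \ref{normF2} confines the unitary to $\CQ_2^\IT$, and then the $\CO_2$-normalizing hypothesis forces it into $\CF_2$. The only cosmetic difference is that the paper invokes Corollary \ref{normD2} at the second step while you apply Theorem \ref{normalO2} directly, but that corollary rests on the same theorem, so the arguments coincide in substance.
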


\begin{proof}
We only prove the first equality, as the second is basically known and is easy to check.
The inclusion $N_{\CD_2}(\CF_2)\subset N_{\CD_2}(\CQ_2) \cap N_{\CF_2}(\CQ_2) \cap N_{\CO_2}(\CQ_2)$ is obvious.
For the other inclusion, note that any $x\in N_{\CD_2}(\CQ_2) \cap N_{\CF_2}(\CQ_2) \cap N_{\CO_2}(\CQ_2)$  lies in
$\CQ_2^{\mathbb{T}}$ by  Proposition \ref{normF2}. The conclusion is then reached straightforwardly by an application of
Corollary \ref{normD2}.
 \end{proof}

\begin{proposition}
The intersection $N_{\CF_2}(\CQ_2)\cap N_{\CO_2}(\CQ_2)$ reduces to $\CU(\CF_2)$.
\end{proposition}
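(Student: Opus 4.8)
The plan is to reduce to Proposition~\ref{normF2} and Corollary~\ref{inner}. Suppose $w\in\CQ_2$ is a unitary normalizing both $\CF_2$ and $\CO_2$. First I would use Proposition~\ref{normF2}, which gives $N_{\CF_2}(\CQ_2)=N_{\CF_2}(\CQ_2^\IT)$, to conclude that $w$ already lies in the Bunce--Deddens algebra $\CQ_2^\IT$. Thus $w$ is a unitary in $\CQ_2^\IT$ with $w\CO_2 w^*\subset\CO_2$ (indeed $=\CO_2$, but the one-sided inclusion is all that is needed).

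At this point Theorem~\ref{normalO2} applies directly: any unitary $w\in\CQ_2^\IT$ with $w\CO_2 w^*\subset\CO_2$ must in fact lie in $\CF_2$. Hence $w\in\CF_2$, giving the inclusion $N_{\CF_2}(\CQ_2)\cap N_{\CO_2}(\CQ_2)\subseteq\CU(\CF_2)$. The reverse inclusion $\CU(\CF_2)\subseteq N_{\CF_2}(\CQ_2)\cap N_{\CO_2}(\CQ_2)$ is trivial, since conjugation by a unitary of $\CF_2$ preserves $\CF_2$ (it is inner there) and preserves $\CO_2$ (since $\CF_2\subset\CO_2$ and $\CO_2$ is closed under inner automorphisms). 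Combining the two inclusions yields the asserted equality $N_{\CF_2}(\CQ_2)\cap N_{\CO_2}(\CQ_2)=\CU(\CF_2)$.

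There is essentially no obstacle here: the statement is a clean corollary of the two substantial results already established, namely Proposition~\ref{normF2} (which itself rests on $\CF_2'\cap\CQ_2=\IC1$ and a KMS-state argument paralleling Proposition~\ref{normalizer}) and Theorem~\ref{normalO2} (the hard technical heart, relying on the Fej\'er-type approximation in $\CQ_2^\IT$ via Lemmas~\ref{algebraic} and \ref{nonzerolimit}). The only point worth a word of care is making sure the hypothesis of Theorem~\ref{normalO2} is genuinely met: we are handed $w\CO_2 w^*=\CO_2$, which certainly entails $w\CO_2 w^*\subset\CO_2$, so the theorem applies verbatim and even returns the equality $w\CO_2 w^*=\CO_2$ as a bonus. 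I expect the proof to be only a few lines long.
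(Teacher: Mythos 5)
Your proof is correct and follows essentially the same route as the paper: Proposition~\ref{normF2} places the unitary in $\CQ_2^\IT$, Theorem~\ref{normalO2} then forces it into $\CF_2$, and the reverse inclusion is immediate. (The passing mention of Corollary~\ref{inner} is superfluous, but harmless.)
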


\begin{proof}
By Proposition \ref{normF2} the normalizer of $\CF_2$ in $\CQ_2$ coincides with $N_{\CF_2}(\CQ_2^\IT)$. Therefore, the intersection
$N_{\CF_2}(\CQ_2)\cap N_{\CO_2}(\CQ_2)$ is actually given by $N_{\CF_2}(\CQ_2^{\IT})\cap N_{\CO_2}(\CQ_2)$. All unitaries in the
UHF algebra $\CF_2$ are obviously contained in the above inclusion. On the other hand, Theorem \ref{normalO2} says the intersection must be contained in
$\CU(\CF_2)$ as well, which ends the proof.
\end{proof}

Theorem \ref{normalO2} and Corollary \ref{inner} could be improved in some respects.
For instance, one may  also want to consider more general (possibly outer) automorphisms of $\CQ_2$ leaving $\CO_2$ globally invariant. 
This may in fact come in useful to deal with permutative endomorphisms.
Furthermore, one may even ask whether the normalizer of $\CO_2$ in $\CQ_2$ is $\CU(\CO_2)$.

\begin{remark}
The inclusion $\CO_2\subset \CO_2 \rtimes_{\lambda_f} \IZ_2\cong \CO_2$, where $\lambda_f$ 
 is the flip-flop automorphism that switches the two generating isometries, provides an example where the normalizer
of the Cuntz algebra, thought of as a subalgebra of a bigger algebra, which in this case is again the Cuntz algebra
up to an isomorphism, is not exhausted by the unitaries of $\CO_2$. Indeed, the unitary 
$w\in\CO_2\rtimes_{\lambda_f}\IZ_2$ that implements the action of the flip-flop on $\CO_2$ is certainly
not contained in the Cuntz algebra since $\lambda_f$ is outer. 
It is also worth noting that the relative commutant  $\CO_2'\cap (\CO_2 \rtimes_{\lambda_f} \IZ_2)$ is trivial,
cf. \cite[Remark 5.9]{Choi}
\end{remark}

\subsection{A class of automorphisms of $\CO_2$}

In passing, we would like to take this opportunity to point out a result that yields a complete description of those unitaries 
in $u$ in the UHF algebra $\CF_2$ whose corresponding endomorphism $\lambda_u$ is actually an automorphism, which is much in the spirit of \cite[Theorem 3.2]{CoSz11}. 
Its main interest has admittedly little to do with the scope of the present paper. Nevertheless, we feel it deserves to be included all the same 
because it might be further developed in future work as well as framing Theorem \ref{normalO2} in a more general picture.
To this end, the following straightforward remark, which is general in character, is vital.

\begin{remark}\label{gen}
If $\lambda_u$ is an automorphism and $\lambda_u^{-1}=\lambda_v$, then $\lambda_u(v)=u^*$ and $\lambda_v(u)=v^*$.
Indeed, by definition we have $\lambda_u\circ\lambda_v=\lambda_v\circ\lambda_u=\textrm{id}_{\CO_2}=\lambda_{1}$.
Now $\lambda_u\circ\lambda_v=\lambda_{\lambda_u(v)u}$ and $\lambda_v\circ\lambda_u=\lambda_{\lambda_v(u)v}$, hence
$\lambda_u(v)u=1$ and $\lambda_v(u)v=1$, i.e. $\lambda_u(v)=u^*$ and $\lambda_v(u)=v^*$, as maintained.
In particular, it follows that if $\lambda_u\in\Aut(\CO_2)$ then $\lim_k \Ad(u_k)(v)=\lambda_u(v)=u^*$.
\end{remark}

\begin{thm}\label{characterization}
Given a unitary $u$ in $\CF_2$, set $u_k\doteq u\varphi(u)\varphi(u^2)\ldots\varphi^{k-1}(u)$, for any  $k\in\IN\setminus\{0\}$.
Then  the following are equivalent:
\begin{enumerate}
\item The endomorphism $\lambda_u$ is an automorphism of $\CO_2$.
\item There exists a unitary $v\in\CF_2$ such that $\lim_k \Ad (u_k) v=u^*$.
\item The sequence $\{u_k^*u^*u_k: k\in\IN\}$ is norm convergent.
\end{enumerate}
Moreover, as soon as any of the three conditions above is satisfied, the $v$ in (2) is the limit of the sequence in (3) and $\lambda_u^{-1}=\lambda_v$.
\end{thm}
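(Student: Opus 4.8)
The plan is to prove a cycle of implications $(1)\Rightarrow(2)\Rightarrow(3)\Rightarrow(1)$, exploiting the multiplicativity of the $u_k$'s with respect to the canonical endomorphism $\varphi$. The key algebraic identity is that $\lambda_u^k = \lambda_{u_k}$, which follows by induction from $\lambda_u\circ\lambda_w = \lambda_{\lambda_u(w)u}$ together with $\lambda_u(w) = \varphi(w)$ on... more carefully, the standard fact that $\lambda_u^n(x) = u_n x u_n^*$ for $x$ in the fixed-point algebra and $\lambda_u^n(S_i) = u_n S_i$. I would record at the outset the elementary observations $u_{k+1} = u_k\varphi^k(u) = u\varphi(u_k)$ and $\mathrm{Ad}(u_k)(v) = u_k v u_k^*$.

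For $(1)\Rightarrow(2)$: if $\lambda_u$ is an automorphism, write $\lambda_u^{-1} = \lambda_v$ for a unique unitary $v$; since $\lambda_u^{-1}$ maps $\CF_2$ to $\CF_2$ and fixes nothing forcing $v\notin\CF_2$ a priori, I first need $v\in\CF_2$ — this follows because $\lambda_v = \lambda_u^{-1}$ is the inverse of an automorphism restricting to an automorphism of $\CF_2$ (as $\lambda_u(\CF_2)=\CF_2$ when $u\in\CF_2$), so $\lambda_v(\CF_2)=\CF_2$, and then $v = \lambda_v(S_1)S_1^* + \lambda_v(S_2)S_2^*\in\CF_2$. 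By Remark~\ref{gen}, $\lambda_u(v) = u^*$, and since $\lambda_u = \lim_k \mathrm{Ad}(u_k)$ pointwise in norm on $\CF_2$ (this is the standard convergence statement for the Cuntz--Takesaki correspondence, valid since $v\in\CF_2$), we conclude $\lim_k \mathrm{Ad}(u_k)(v) = \lambda_u(v) = u^*$, which is exactly $(2)$.

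For $(2)\Rightarrow(3)$: given $v\in\CF_2$ with $\mathrm{Ad}(u_k)(v)\to u^*$, apply $\mathrm{Ad}(u_k^*)$ to both sides. The point is that $\mathrm{Ad}(u_k^*)$ is isometric, so from $\|u_k v u_k^* - u^*\|\to 0$ we get $\|v - u_k^* u^* u_k\|\to 0$; hence the sequence $\{u_k^* u^* u_k\}$ converges (to $v$). For $(3)\Rightarrow(1)$: let $v\doteq\lim_k u_k^* u^* u_k$, which lies in $\CF_2$ since each $u_k^* u^* u_k$ does. I would show $\lambda_v = \lambda_u^{-1}$ by checking $\lambda_u\circ\lambda_v = \mathrm{id}$ and $\lambda_v\circ\lambda_u = \mathrm{id}$ on the generators $S_i$. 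Now $\lambda_u\circ\lambda_v = \lambda_{\lambda_u(v)u}$, and $\lambda_u(v) = \lim_k \mathrm{Ad}(u_k)(u_k^* u^* u_k)$ — here I use continuity of $\lambda_u$ on $\CF_2$ and that $\lambda_u$ restricted to $\CF_2$ equals $\lim_k\mathrm{Ad}(u_k)$; a short computation gives $\mathrm{Ad}(u_k)(u_k^* u^* u_k) = u^* u_k u_k^*$... this needs care. Better: use $\lambda_u^{k}(v) = u_k v u_k^*$ and $\lambda_u^{k+1} = \lambda_u\circ\lambda_u^k = \lambda_{\lambda_u(u_k v u_k^*)u}$; the cleanest route is to observe directly that $\lambda_u(v)u = \lim_k u_{k+1}^* u^* u_{k+1}\cdot(\text{correction})$ collapses to $1$. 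The main obstacle I anticipate is precisely this last bookkeeping step: correctly matching the telescoping products $u_k$ with the shifted sequences $\varphi^j(u)$ so that $\lambda_u(v)u = 1$ and $\lambda_v(u)v = 1$ fall out, using only the algebraic identities $u_{k+1}=u\varphi(u_k)$, $\varphi(u_k)\in\CF_2$, and the fact that $\lambda_u$ acts on $\CF_2$ as the norm limit of $\mathrm{Ad}(u_k)$; once those are lined up, everything is formal, and the final sentence (that the $v$ in (2) equals the limit in (3) and $\lambda_u^{-1}=\lambda_v$) is already contained in the proofs of the implications.
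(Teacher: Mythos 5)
Your overall strategy is the same as the paper's (a cycle of implications built on Remark~\ref{gen} and on the fact that $\lambda_u$ restricted to $\CF_2$ is the pointwise norm limit of $\Ad(u_k)$), and two of your three steps are fine. Indeed your $(2)\Rightarrow(3)$ — conjugating by the unitary $u_k^*$, so that $\|u_kvu_k^*-u^*\|=\|v-u_k^*u^*u_k\|$ — is cleaner than the paper's route, which instead proves $(1)\Rightarrow(3)$ by approximating $v$ with elements $v_i\in\CF_2^i$. Your treatment of $(1)\Rightarrow(2)$ is also more careful than the paper's about why $v\in\CF_2$ (via gauge-equivariance of $\lambda_u^{-1}$), which the paper only asserts in a later remark.

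There are, however, two genuine problems. First, your announced "key algebraic identity" $\lambda_u^k=\lambda_{u_k}$ is false: the $u_k$ do not encode the iterates of $\lambda_u$, but rather a single application of $\lambda_u$ on deeper levels of the UHF filtration, i.e.\ $\lambda_u(x)=u_kxu_k^*$ for $x\in\CF_2^k$ (whence $\lambda_u=\lim_k\Ad(u_k)$ on $\CF_2$); likewise $\lambda_u^n(S_i)\neq u_nS_i$ in general. This confusion (you later write "$\lambda_u^k(v)=u_kvu_k^*$") is what derails you. Second, and as a consequence, you leave $(3)\Rightarrow(1)$ unfinished, anticipating a "bookkeeping obstacle" that is not actually there. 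The computation you abandoned already closes the argument: since $u_k$ is unitary, $\Ad(u_k)(u_k^*u^*u_k)=(u_ku_k^*)u^*(u_ku_k^*)=u^*$ exactly, so with $v\doteq\lim_k u_k^*u^*u_k\in\CF_2$ one has
\[
\|\lambda_u(v)-u^*\|\;\le\;\|\lambda_u(v)-\Ad(u_k)(v)\|+\|\Ad(u_k)\bigl(v-u_k^*u^*u_k\bigr)\|\;=\;\|\lambda_u(v)-\Ad(u_k)(v)\|+\|v-u_k^*u^*u_k\|\longrightarrow 0,
\]
the first term vanishing because $v\in\CF_2$. Hence $\lambda_u(v)=u^*$, so $\lambda_u\circ\lambda_v=\lambda_{\lambda_u(v)u}=\lambda_1=\id$, $\lambda_u$ is surjective (injectivity being automatic by simplicity), and $\lambda_u^{-1}=\lambda_v$ with $v$ equal to the limit in (3). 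No matching of telescoping products with shifted $\varphi^j(u)$'s is needed.
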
 

\begin{proof}
The implication $(1)\Rightarrow (2)$ is nothing but the content of Remark \ref{gen}.
On the other hand, if there exists a unitary $v\in\CF_2$ such that $\lim_k\Ad u_k(v)=u^*$, then 
$\lambda_u(v)=u^*$, hence $\lambda_u$ is surjective. In other terms, the implication
$(2)\Rightarrow (1)$  holds too.
We now prove $(1)\Rightarrow (3)$. If $\lambda_u$ is an automorphism, then by (2) there exists a unitary $v\in \CF_2$
such that $\|u_kvu_k^*- u^*\|$ tends to zero. By the very definition of the UHF subalgebra $\CF_2$ we can assume
that $v=\lim v_i$ with $v_i\in \CF_2^i$ fo every $i\in\IN$. By continuity, we get the equalities
$$\lambda_u(v)=\lim_i\lambda_u(v_i)=\lim_i u_iv_iu_i^*
$$
namely  $\| \lambda_u(v)-u_iv_iu_i^*\|$ goes to zero. But then
$$
\|u_i^*u^*u_i-v\|\leq \| u_i^*u^*u_i-v_i \|+\|v_i -v\|\rightarrow 0
$$
For $(3)\Rightarrow (1)$, let us define $v\doteq \lim_k u_k^*uu_k \in \CU(\CO_2)$. We then want to show that
$\lambda_u(v)=u^*$, whence $\lambda_u$ is surjective. 
Now $\lambda_u(v)=\lim_j u_jvu_j^*=\lim_j u_j(\lim_i u_i^*uu_i)u_j^*$. In other terms, 
$\|u_j^*\lambda_u(v)u_j-\lim_i u_i^*uu_i\|\rightarrow 0$, that is for any given 
$\varepsilon>0$ there exists $n_\varepsilon\in\IN$ such that $j\geq n_\varepsilon$
implies  $\| u_j^*\lambda_u(v)u_j-\lim_iu_i^*uu_i \|<\frac{\varepsilon}{2}$.

$$
\|u_j^*\lambda_u(v)u_j-u_i^*uu_i\|\leq \|u_j^*\lambda_u(v)u_j -\lim_i u_i^*uu_i \|+\|\lim_i u_i^*uu_i-u_i^*uu_i\|<\varepsilon
$$

for every $i,j\geq N_\varepsilon\doteq {\rm max}\{n_\varepsilon,m_\varepsilon\}$, where $m_\varepsilon$ is any integer such that $i\geq m_\varepsilon$ implies
$\|\lim_i u_i^*uu_i-u_i^*uu_i\|<\frac{\varepsilon}{2}$. In particular, if we choose $i=j\geq N_\varepsilon$, we find 
$\|\lambda_u(v)-u^*\|=\|u_i^*(\lambda_u(v)-u^*)u_i\|\leq\varepsilon$, hence $\lambda_u(v)=u^*$.
\end{proof}

\begin{remark}
Let $u$ be a unitary in $\CF_2$ such that $\lambda_u$ is an automorphism. Since $\lambda_u^{-1}=\lambda_v$, with $v$ that is still
in $\CF_2$, we see that $\lambda_u$ is actually an element of $\Aut(\CO_2, \CF_2)\doteq\{\alpha\in\Aut(\CO_2): \alpha(\CF_2)=\CF_2\}$ 
\end{remark}

\section{The inclusion of $\CO_2 \subset \CQ_2$ is not regular}

The present section collects some results on the unitary normalizer of $\CO_2$ in $\CQ_2$, namely  the group $N_{\CO_2}(\CQ_2)\doteq\{v \in \CU(\CQ_2) \ | \ v \CO_2 v^* = \CO_2\} \subset \CU(\CQ_2)$.
Needless to say, this normalizer is $\Aut_{\CO_2}(\CQ_2)$-invariant, in particular invariant under the action of the extended gauge and flip-flop automorphisms.\\

We start our discussion with a technical lemma, which roughly says that no unitary operator of $\CQ_2\subset\CB(\ell_2(\IZ))$ can map $\CH_+$ to $\CH_-$ and $\CH_-$ to $\CH_+$, where $\CH_+$ and $\CH_-$ are the closed subspaces of $\ell_2(\IZ)$ given by
$\overline{{\rm span}}\{e_k:\,\,k\geq 0\}$ and $\overline{{\rm span}}\{e_k:\,\,k<0\}$ respectively.

\begin{lemma}\label{noexchange}
There is no unitary $u$ in $\CQ_2$ such that $u\mathcal{H}_\pm=\mathcal{H}_\mp$.
\end{lemma}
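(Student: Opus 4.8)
The idea is to argue by contradiction using the gauge action, exactly as in the proof of Proposition~\ref{normalizer}, together with the rigidity of the inclusion $\CO_2\subset\CQ_2$. Suppose such a unitary $u\in\CQ_2$ exists with $u\CH_\pm=\CH_\mp$. The key observation is that the orthogonal projection $Q$ onto $\CH_+$ is \emph{not} an element of $\CQ_2$ (it does not even lie in the weak closure in a way compatible with simplicity: $Q$ and $1-Q$ would be non-trivial central-like projections for the Toeplitz-type structure), but conjugation by $u$ would swap $Q$ and $1-Q$, and one can instead work with operators that \emph{are} in $\CQ_2$ and detect the exchange.

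First I would recall that in the canonical representation $S_2$ maps $\CH_+$ into $\CH_+$ and $\CH_-$ into $\CH_-$ (since $\rho_c(S_2)e_k=e_{2k}$, and $k\geq 0\iff 2k\geq 0$), and more generally every $S_\alpha$ preserves both $\CH_+$ and $\CH_-$; the generator $U$ satisfies $U\CH_+\subset\CH_+$ but $U^{-1}\CH_+\not\subset\CH_+$ (only $e_0$ escapes), and $U^{-n}\CH_-\subset\CH_-$ always. Thus every monomial $S_\alpha U^k S_\beta^*$ with $k\geq 0$ maps $\CH_+$ into $\CH_+$, and for $k<0$ it maps $\CH_+$ into $\CH_+$ up to a finite-rank error. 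Consequently, \emph{any} $x\in\CQ_2$ that is a norm limit of algebraic elements satisfies $\|Q^\perp x\,Qe_n\|\to 0$ as $n\to+\infty$ along $\CH_+$, where $Q^\perp=1-Q$; equivalently $Qx Q-xQ$ is ``small at infinity'' on $\CH_+$. This is the analogue of the fact used in the final Remark of Section~4, that $\CH_\pm$ are asymptotically invariant under $\CQ_2$.

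Now apply this to $u$ itself: since $u\CH_+=\CH_-$, we have $Q u\,e_n\to 0$ is false — rather $Q^\perp u e_n=ue_n$ for all $n\geq 0$, so $\|Q^\perp u\,Q e_n\|=1$ for every $n\geq 0$, contradicting the approximation estimate above once $n$ is large enough (choose an algebraic $T$ with $\|u-T\|<1/2$, write $T=\sum\gamma_iS_{\alpha_i}U^{k_i}S_{\beta_i}^*$; for $n$ large enough that no finite-rank correction is active, $Q^\perp Te_n$ has norm $<1/2$ while $\|Q^\perp ue_n\|=1$, so $\|u-T\|\geq 1/2$). This is the heart of the argument. The one point requiring care — and the main obstacle — is handling the negative powers of $U$ in the approximant $T$: for $k_i<0$ the monomial $S_{\alpha_i}U^{k_i}S_{\beta_i}^*$ can move finitely many basis vectors from $\CH_+$ to $\CH_-$, so the estimate $\|Q^\perp Te_n\|<\varepsilon$ holds only for $n$ outside a finite exceptional set determined by $T$ (and similarly on $\CH_-$ via $u^*$). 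Since that exceptional set is finite while we need the contradiction only for a single large $n$, this is not a genuine obstruction; I would make it precise by noting $S_{\alpha_i}U^{k_i}S_{\beta_i}^*e_n\in\CH_+$ whenever $n\geq 2^{|\beta_i|}|k_i|$, and taking $n$ larger than $\max_i 2^{|\beta_i|}|k_i|$. This completes the contradiction and hence the lemma.
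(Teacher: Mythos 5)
Your proposal is correct and follows essentially the same route as the paper: approximate $u$ in norm by a finite linear combination of monomials, pick a basis vector $e_n\in\CH_+$ with $n$ large enough (beyond the finite exceptional set created by the negative powers of $U$) so that the approximant sends $e_n$ into $\CH_+$, and derive a contradiction from the fact that $ue_n$ is a unit vector in $\CH_-$ orthogonal to that image. The opening paragraph about the gauge action and Proposition \ref{normalizer} plays no role in the actual argument and could be dropped, but this does not affect correctness.
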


\begin{proof}
We shall argue by contradiction. Let $u$ be a unitary in $\CQ_2$ such that $u\mathcal{H}_+=\mathcal{H}_-$.
Let $\epsilon>0$ be any real number strictly less than $1$.
Since $u$ belongs to $\CQ_2$ then there exists an element of the form $\sum_{i=1}^N c_i S_{\alpha_i}S_{\beta_i} U^{k_i}$, $c_i\in\IC$ and 
$\alpha_i,\beta_i\in W_2$, such that $\|u- \sum_{i=1}^N c_i S_{\alpha_i}S_{\beta_i} U^{k_i} \|< \varepsilon$. Let now $M$ be $\max\{|k_i|:i=1, 2, \ldots , N\}$.
Then $\| ue_M-\sum_{i=1}^N c_i S_{\alpha_i}S_{\beta_i} U^{k_i}e_M \|= \|ue_M- \sum_{i=1}^N c_i S_{\alpha_i}S_{\beta_i} e_{k_i+M}\|<\varepsilon$.
Since $ue_M$ lies in $\mathcal{H}_-$, it is orthogonal to $\sum_{i=1}^N c_i S_{\alpha_i}S_{\beta_i} e_{k_i+M}\in\mathcal{H}_+$, but then 
$\|ue_M\|<\varepsilon<1$, which is clearly absurd. 
\end{proof}

\begin{remark}
As a consequence, the only operator in $\CQ_2$ that sends $\CH_{\pm}$ to $\CH_{\mp}$ is the null operator. 
\end{remark}

The above lemma is instrumental in proving the next result. Although not still the complete characterization of $N_{\CO_2}(\CQ_2)$,  it  does have the merit of limiting the sought normalizer.  More precisely, the result says that $N_{\CO_2}(\CQ_2)$ cannot be larger than $\CU(\CQ_2\cap O_2'')$.   

\begin{proposition}\label{inclusion}
The normalizer $N_{\CO_2}(\CQ_2)$ is contained in $\CO_2'' \cap \CQ_2$.
\end{proposition}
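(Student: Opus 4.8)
The goal is to show that any unitary $v \in N_{\CO_2}(\CQ_2)$ actually lies in the weak closure $\CO_2''$ (inside $\CB(\ell_2(\IZ))$ via the canonical representation). The natural strategy is to exploit the fact, recalled in the excerpt, that $\CH_+ = \clsp\{e_k : k \geq 0\}$ and $\CH_- = \clsp\{e_k : k < 0\}$ are invariant subspaces for $\CO_2$: concretely, $S_1 = US_2$ and $S_2$ both map $\CH_+$ into $\CH_+$ and $\CH_-$ into $\CH_-$, since $S_2 e_k = e_{2k}$ and $US_2 e_k = e_{2k+1}$ preserve the sign of the index (for $k \geq 0$ one gets index $\geq 0$, for $k < 0$ one gets index $< 0$). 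Consequently the orthogonal projection $E_+$ onto $\CH_+$ commutes with $\CO_2$, hence $E_+ \in \CO_2'$. The plan is therefore: (i) observe $E_+ \in \CO_2'$; (ii) show that conjugation by $v$ fixes $E_+$, i.e. $vE_+v^* = E_+$; (iii) conclude via the double commutant that $v \in \CO_2''$.

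\textbf{Key steps in order.} First I would record that $E_\pm$ are $\CO_2$-invariant and hence $E_+ \in \CO_2' \cap \CB(\ell_2(\IZ))$; in fact $\CO_2'$ is generated (as a von Neumann algebra) by $E_+$ together with the scalars, but we will not need this — only $E_+ \in \CO_2'$ is used. Second, since $v$ normalizes $\CO_2$, the automorphism $\Ad(v)$ of $\CQ_2$ restricts to an automorphism of $\CO_2$ (it is surjective onto $\CO_2$ by hypothesis $v\CO_2 v^* = \CO_2$), and therefore $\Ad(v)$ maps the commutant $\CO_2'$ onto itself; so $vE_+v^* \in \CO_2'$ as well. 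Now $\CO_2'$, as a von Neumann algebra acting on $\ell_2(\IZ)$, has exactly two non-trivial projections, namely $E_+$ and $E_- = 1 - E_+$ (because $\CO_2$ acts irreducibly on each of $\CH_+$ and $\CH_-$ — this is standard for the canonical representation). Hence $vE_+v^*$ is either $E_+$ or $E_-$. But $vE_+v^* = E_-$ would mean $v\CH_+ = \CH_-$, which is precisely forbidden by Lemma \ref{noexchange}. Therefore $vE_+v^* = E_+$, i.e. $v$ commutes with $E_+$, so $v \in \{E_+\}' \supseteq \CO_2''$; more precisely $v \in \CO_2'' = \{E_+\}'$ since the bicommutant of $\CO_2$ is exactly the commutant of $E_+$. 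Intersecting with the standing assumption $v \in \CQ_2$ gives $v \in \CO_2'' \cap \CQ_2$, as claimed.

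\textbf{Main obstacle.} The only genuinely delicate point is the identification of the projection lattice of $\CO_2'$ with $\{0, E_+, E_-, 1\}$, equivalently the irreducibility of the action of $\CO_2$ on each half-space $\CH_\pm$; once that is in hand, Lemma \ref{noexchange} does the rest by ruling out the swap. One has to be slightly careful that $\CO_2$ genuinely acts irreducibly on $\CH_+$ (and on $\CH_-$): this follows because the restriction of the canonical representation to $\CH_+$ is unitarily equivalent to a permutative (Cuntz-type) representation of $\CO_2$ on $\ell_2(\N)$ with the branching map $k \mapsto (2k, 2k+1)$, which is the standard realization of $\CO_2$ and is irreducible; similarly on $\CH_-$ using $k \mapsto (2k, 2k+1)$ for $k < 0$. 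If one prefers to avoid invoking irreducibility, an alternative is to argue directly: $vE_+v^*$ is a projection in $\CO_2'$ commuting with $E_+$ (since $vE_+v^*$ and $E_+$ both lie in the abelian — in fact two-dimensional — algebra $\CO_2'$), so $v$ maps $\CH_+$ onto one of $\CH_+$, $\CH_-$, or a sum/difference, and the non-exchange lemma together with $v$ being unitary forces $v\CH_+ = \CH_+$. Either way, the substantive input is Lemma \ref{noexchange}, and the rest is a short commutant computation.
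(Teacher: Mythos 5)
Your argument is correct and follows essentially the same route as the paper's: both identify $\CO_2'=\IC E_+\oplus\IC E_-$ (the paper cites this from earlier work, you sketch the irreducibility on each of $\CH_\pm$), observe that $\Ad(v)$ must permute the minimal projections of $\CO_2'$, rule out the swap $vE_+v^*=E_-$ by Lemma \ref{noexchange}, and conclude that $v$ commutes with $E_\pm$ and hence lies in $\CO_2''$. No substantive differences.
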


\begin{proof}
Let $u\in\CQ_2\subset\mathcal{B}(\ell_2(\IZ))$ be a unitary that leaves $\CO_2$ globally invariant, that is $u\CO_2u^*=\CO_2$. Then it also 
leaves $\CO_2'$ invariant. But $\CO_2'=\IC E_+\oplus \IC E_-$,  where $E_\pm$ is the orthogonal projection onto $\mathcal{H}_\pm$, see \cite[Section 2] {ACR}. 
Accordingly, there are only two cases that can occur. Either $uE_\pm u^*= E_\pm$ or $u E_\pm u^*=E_\mp$.  However, Lemma \ref{noexchange} says that the second situation will not occur. But then
$uE_\pm=E_\pm u$, which means $u$ is in $\CO_2''$, as we wanted to prove.
\end{proof}

Notably, the result also enables us to see that the inclusion of $\CO_2$ in $\CQ_2$ is not regular, to wit the $C^*$-algebra generated by its normalizer fails to be the whole $\CQ_2$.

\begin{thm}
The $C^*$-subalgebra generated by $N_{\CO_2}(\CQ_2)$ is properly contained in $\CQ_2$.
\end{thm}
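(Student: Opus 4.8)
The plan is to combine Proposition \ref{inclusion} with the elementary fact that $\CO_2$ is far from acting irreducibly on $\ell_2(\IZ)$, so that $\CO_2''$ does not exhaust $\CB(\ell_2(\IZ))$. First I would observe that, by Proposition \ref{inclusion}, every unitary in $N_{\CO_2}(\CQ_2)$ lies in $\CO_2''\cap\CQ_2$; since the latter is a $C^*$-subalgebra of $\CQ_2$, the $C^*$-algebra generated by $N_{\CO_2}(\CQ_2)$ is contained in $\CO_2''\cap\CQ_2$. Hence it suffices to produce a single element of $\CQ_2$ lying outside $\CO_2''$, which then forces the strict inclusion $C^*(N_{\CO_2}(\CQ_2))\subseteq\CO_2''\cap\CQ_2\subsetneq\CQ_2$.

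To exhibit such an element, recall from \cite[Section 2]{ACR} that $\CO_2'=\IC E_+\oplus\IC E_-$, where $E_\pm$ is the orthogonal projection onto $\CH_\pm$, and note that $E_++E_-=1$; consequently $\CO_2''=\{E_+,E_-\}'$ is precisely the algebra of operators on $\ell_2(\IZ)$ leaving both $\CH_+$ and $\CH_-$ invariant, i.e.\ the block-diagonal operators with respect to $\ell_2(\IZ)=\CH_+\oplus\CH_-$. Now the generator $U$, which in the canonical representation is the bilateral shift $Ue_k=e_{k+1}$, maps $e_{-1}\in\CH_-$ to $e_0\in\CH_+$, so it does not preserve $\CH_-$; equivalently $UE_+\neq E_+U$. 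Thus $U\in\CQ_2\setminus\CO_2''$, which is exactly what is needed.

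There is no genuine obstacle here: the statement is a quick corollary of the structural Proposition \ref{inclusion} together with the observation above. If one prefers a more conceptual formulation, one may argue instead that the canonical representation of $\CQ_2$ is irreducible, so $\CQ_2''=\CB(\ell_2(\IZ))$; were $\CO_2''\cap\CQ_2$ all of $\CQ_2$ one would get $\CB(\ell_2(\IZ))=\CQ_2''\subseteq\CO_2''$, hence $\CO_2'\subseteq\IC1$, contradicting $\dim\CO_2'=2$. The only point requiring a little care is the order of the argument — one first confines the normalizer inside $\CO_2''\cap\CQ_2$ via Proposition \ref{inclusion}, and only afterwards exhibits a witness (the shift $U$, or indeed any power $U^k$ with $k\neq 0$) demonstrating that this intermediate algebra is proper.
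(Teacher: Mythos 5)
Your argument is correct and is essentially identical to the paper's proof: both confine $N_{\CO_2}(\CQ_2)$ inside $\CO_2''\cap\CQ_2$ via Proposition \ref{inclusion} and then observe that the generator $U$ fails to leave $\CH_-$ invariant, so it cannot lie in $\CO_2''$. The extra remarks (the explicit witness $Ue_{-1}=e_0$ and the alternative irreducibility argument) are fine but not needed.
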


\begin{proof}
Suppose on the contrary this subalgebra does exhaust $\CQ_2$. The intersection $\CO_2''\cap\CQ_2$ should then coincide
with $\CQ_2$, because of the inclusion $N_{\CO_2}(\CQ_2)\subset \CO_2''\cap \CQ_2$ proved in Proposition \ref{inclusion}.
However, the generator $U$ does not sit in $\CO_2''\cap \CQ_2$ since it does not leave $\mathcal{H}_-$ invariant, cf. \cite[Section 2]{ACR}.
\end{proof}
The $C^*$-algebra $C^*(N_{\CO_2}(\CQ_2))$, which is intermediate between $\CO_2$ and $\CQ_2$, is obviously invariant under the extended gauge and flip-flop automorphisms.
As a matter of fact, we would be inclined to believe that the inclusion of $\CO_2$ in $\CQ_2$ is singular, that is the normalizer
$N_{\CO_2}(\CQ_2)$ should reduce to $\mathcal{U}(\CO_2)$. However,  for the time being all we can do is state a partial  result that nonetheless seems to support 
our guess. What we prove is the Cuntz algebra $\CO_2$ is never invariant under the action of a one-parameter group of unitaries
$u_t=e^{ita}$, where $a$ is a self-adjoint element of $\CQ_2$, unless $a$ is already contained in $\CO_2$.
\begin{proposition}
If $a=a^*$ is a self-adjoint element in $\CQ_2$ such that $e^{ita}\CO_2 e^{-ita}= \CO_2$ for any $t\in\IR$, then $a$ sits in the Cuntz
algebra $\CO_2$.
\end{proposition}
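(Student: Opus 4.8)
The plan is to combine the one-parameter-group hypothesis with Proposition \ref{inclusion} and then exploit the structure of $\CO_2'$ inside $\CB(\ell_2(\IZ))$. First I would observe that since $u_t = e^{ita}$ normalizes $\CO_2$ for every $t$, Proposition \ref{inclusion} gives $u_t \in \CO_2'' \cap \CQ_2$ for all $t$. Equivalently, writing $E_\pm$ for the orthogonal projections onto $\CH_\pm$, we have $u_t E_\pm = E_\pm u_t$ for all $t \in \IR$ (the exchange case $u_t E_\pm u_t^* = E_\mp$ being ruled out by Lemma \ref{noexchange}, or more simply by continuity at $t=0$ where $u_0 = 1$). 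Differentiating the relation $e^{ita}E_\pm = E_\pm e^{ita}$ at $t = 0$ yields $aE_\pm = E_\pm a$, so $a$ commutes with $E_+$ and hence $a \in \CO_2'' \cap \CQ_2$.

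The next step is to upgrade ``$a$ commutes with $E_\pm$'' to ``$a \in \CO_2$''. Here I would use the block decomposition $\ell_2(\IZ) = \CH_+ \oplus \CH_-$ together with the fact, recalled in the excerpt (see \cite[Section 2]{ACR}), that $\CH_+$ and $\CH_-$ are invariant under $\CO_2$ and that $\CO_2' = \IC E_+ \oplus \IC E_-$. Since $a$ commutes with $E_\pm$, it decomposes as $a = a_+ \oplus a_-$ with $a_\pm = E_\pm a E_\pm$ self-adjoint operators on $\CH_\pm$. The restriction $\CO_2 \restriction \CH_+$ is a faithful (hence isometric) representation of $\CO_2$, and similarly for $\CH_-$; the point is to identify the corner $a_+$, living in the weak closure $(\CO_2 \restriction \CH_+)''$, as actually an element of $\CO_2 \restriction \CH_+$. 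This is where I expect the main obstacle to lie: commuting with $E_\pm$ only places $a$ in the von Neumann algebra $\CO_2''$, which is strictly larger than $\CO_2$, so an extra argument is needed to come back down to the $C^*$-algebra.

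To close that gap I would bring in the one-parameter group more forcefully rather than just its derivative. The flow $t \mapsto \Ad(e^{ita})$ restricts to a genuine one-parameter automorphism group $\sigma_t$ of $\CO_2$ (point-norm continuous, since $\CO_2$ is separable and $t \mapsto e^{ita} x e^{-ita}$ is norm-continuous for each $x \in \CO_2$). The question then becomes whether such a flow is necessarily \emph{inner} in $\CO_2$ with a self-adjoint generator already in $\CO_2$. One natural route: $\CO_2$ has a unique trace-scaling-free structure making bounded derivations inner — more concretely, every point-norm-continuous one-parameter automorphism group of $\CO_2$ whose generator extends to a bounded derivation of $\CO_2$ is of the form $\Ad(e^{itb})$ for some self-adjoint $b \in \CO_2$, because all derivations of $\CO_2$ are inner (as $\CO_2$ is simple, unital, purely infinite, separable and nuclear, so $H^1(\CO_2, \CO_2) = 0$). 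Since $\Ad(a)$ restricted to $\CO_2$ is a bounded derivation of $\CO_2$, it equals $\Ad(b)$ for some self-adjoint $b \in \CO_2 \subseteq \CQ_2$; then $a - b$ is a self-adjoint element of $\CQ_2$ that is central for $\CO_2$, hence $a - b \in \CO_2' \cap \CQ_2 = \C E_+ \oplus \C E_-$ intersected with the self-adjoints. Finally one argues $a - b$ must in fact be a scalar: the only elements of $\C E_+ \oplus \C E_-$ lying in $\CQ_2$ are the scalar multiples of $1$, since $E_+ \notin \CQ_2$ (as $U E_+ U^* \ne E_+$, cf. the preceding theorem's proof). Therefore $a = b + \lambda 1 \in \CO_2$, as claimed. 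The delicate point throughout is justifying that the derivation $\Ad(a)\restriction_{\CO_2}$ is genuinely inner \emph{inside} $\CO_2$ with a self-adjoint implementer; if one prefers to avoid invoking vanishing of $H^1(\CO_2,\CO_2)$, an alternative is to work in the canonical representation and analyze the matrix blocks $a_\pm$ directly, using that $(\CO_2 \restriction \CH_-)$ is, after a shift, unitarily equivalent to a representation of $\CO_2$ in which one can run the same argument, but the cohomological route is cleaner.
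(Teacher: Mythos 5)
Your operative argument --- that $x\mapsto[x,a]$ is a bounded derivation of $\CO_2$, hence inner by Sakai's theorem (simplicity of $\CO_2$), so $a-b$ lies in the trivial relative commutant $\CO_2'\cap\CQ_2=\IC 1$ and $a=b+\lambda 1\in\CO_2$ --- is exactly the paper's proof. The preliminary detour through Proposition \ref{inclusion} and the $E_\pm$ block decomposition is dispensable, and your parenthetical reason for $E_+\notin\CQ_2$ is not quite a proof, but the cited triviality of $\CO_2'\cap\CQ_2$ makes it moot.
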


\begin{proof}
The condition that $\CO_2$ is invariant under the one-parameter group $u_t\doteq e^{ita} $ generated by $a$ 
says that the commutator  $[x,a]$ is in $\CO_2$ for any $x\in\CO_2$. In other terms $\CO_2 \ni x\mapsto [x, a]\in\CO_2$ is a bounded
derivation. Since $\CO_2$ is simple, the derivation must be inner by virtue of a classical result by Sakai \cite{Sakai}.
In other words,  there exists $b\in\CO_2$ such that
$[x,a]=[x,b]$ for any $x\in \CO_2$. Therefore, the difference $a-b$ lies in the relative commutant $\CO_2'\cap\CQ_2$. Since the latter is trivial, see
\cite[Section 3.2]{ACR}, we find $a=b+\lambda1$, for some $\lambda\in\IC$. In particular, $a$ is an element in $\CO_2$.
\end{proof}


Before leaving the section, some comments intended as an outlook for the foreseeable future are in order.
One way to prove that $N_{\CO_2}(\CQ_2)$ does in fact
coincide with $\CU(\CQ_2)$ could be to show that the intersection $\CO_2''\cap\CQ_2$ is just $\CO_2$.
More concretely, this amounts to asking if any operator in $\CQ_2\subset\CB(\ell_2(\IZ))$ that also leaves both
$\CH_+$ and $\CH_-$ invariant must lie in $\CO_2$.
This property might be in turn a consequence of an even stronger property which would be worthy of further investigation, namely whether any intermediate subalgebra $\CO_2\subset \CA\subset\CQ_2$ (possibly with some additional properties, e.g. requiring it is gauge-invariant) is trivial, that is $\CA$ is either $\CO_2$ or $\CQ_2$. We plan to go back to this problem elsewhere, not least because
we would like to keep the present work at a reasonable length.\\

\section{Permutative endomorphisms of $\CQ_2$ at an arbitrary level}\label{permutative}

This section aims to present a method by means of which  explicit examples can be exhibited  of permutative endomorphisms of the Cuntz algebra $\CO_2$  associated with unitaries in $\CP_2^k\doteq\{u\in\CU(\CO_2)\; |\; u=\sum_{i=1}^nS_{\alpha_i}S_{\beta_i}^*, |\alpha_i|=|\beta_i|=k \textrm{ for } 1\leq i\leq n, n\in\IN \}$ that extend to  $\CQ_2$ for any $k\in\IN$.
In addition, the technique employed also provides a lower bound for the number of such unitaries, which shows it must grow at least as fast as $(2^k!)^2$.

\subsection{Case $\CP_2^2$}

Let $u \in \CP_2^2 \subset \CF_2^2$ be a permutation matrix, and let $\lambda_u$ be the associated permutative endomorphism of $\CO_2$. 
Recall that, in general, $\lambda_u \in {\rm End}(\CO_2)$ extends to $\CQ_2$ if and only if, setting $\tilde{S}_1 = u S_1$ and $\tilde{S}_2 = u S_2$, there exists $\tilde{U} \in \CQ_2$ such that 
\begin{equation}\label{ext1}
\tilde{U} \tilde{S}_2 = \tilde{S}_1 
\end{equation}
\begin{equation}\label{ext2}
\tilde{U} \tilde{S}_1 = \tilde{S}_2 \tilde{U} 
\end{equation}

In \cite{ACR3} it was found that there are exactly ten extendible permutative endomorphisms rising from unitaries in $P_2^2$.
Among other things, their unique extensions all continue to commute with the (extended) gauge automorphisms. 
In this list there also appeared four automorphisms, namely ${\rm Id}, \lambda_f, \Ad f, \Ad f \circ \lambda_f$.  Their
extensions send $U$ to $U$, $U^*$, $fUf$, and $fU^*f$  respectively. In fact, the images of $U$ through the remaining six 
proper endomorphisms are less easily guessed. We now tune up a method for recovering
those six endomorphisms from a somewhat different perspective, thus paving the way to a general analysis that will be carried out later on.\\

We start with case $\tilde{U} = U^2$. Since we have
$$U^2 S_1 S_1 = S_1 S_2 U, \quad U^2 S_1 S_2 = S_1 S_1, \quad U^2 S_2 S_1 = S_2 S_2 U, \quad U^2 S_2 S_2 = S_2 S_1 \ , $$
the multiplication by $U^2$ yields a monomial in $\CO_2$ only on the above elements of the form $S_i S_2$, $i=1,2$. We say that the monomials $S_i S_2$ are well-suited (for $U^2$).
Because $uS_i$ is a linear combination of $S_jS_hS_k^*$ and $U^2 S_r = S_r U$ we easily see that if Equation \eqref{ext1} is satisfied then,  Equation \eqref{ext2} is automatically
satisfied as well. Accordingly, all we have to bother with is
Equation \eqref{ext1}. The $u$'s below fulfil all requirements:
\begin{align*}
& u_{23} := S_1 S_2 (S_2 S_1)^* + S_1 S_1 (S_1 S_1)^* + S_2 S_2 (S_2 S_2)^* + S_2 S_1 (S_1 S_2)^* \equiv F\\
& u_{1342} := S_1 S_2 (S_2 S_2)^* + S_1 S_1 (S_1 S_2)^* + S_2 S_2 (S_2 S_1)^* + S_2 S_1 (S_1 S_1)^*
\end{align*}
These $u$'s are obtained by means of the following scheme. We start by taking the first  well-suited monomial according to the lexicographic order. We then match it with a monomial of the form $S_2 S_i$. This can be done in two different ways, which explains why we end up with two different unitaries. The next summand is the image of the chosen well-suited monomial under multiplication by $U^2$, followed by the same matching monomial as in the first summand, but with the first index changed to 1. We finally apply the procedure to the remaining well-suited monomial, but using matching monomials different from those already used.\\

The same method continues to work for $\tilde{U} = U^{-2}$. Now we have
$$U^{-2} S_1 S_1 = S_1 S_2, \quad U^{-2} S_1 S_2 = S_1 S_1 U^{-1}, \quad U^{-2} S_2 S_1 = S_2 S_2, \quad U^{-2} S_2 S_2 = S_2 S_1 U^{-1} \  $$
which means the well-suited monomial for $U^{-2}$ are those of the form $S_i S_1$, which leads to 
\begin{align*}
& u_{1243} = S_1 S_1 (S_2 S_1)^* + S_1 S_2 (S_1 S_1)^* + S_2 S_1 (S_2 S_2)^* + S_2 S_2 (S_1 S_2)^* \\
& u_{14} = S_1 S_1 (S_2 S_2)^* + S_1 S_1 (S_1 S_2)^* + S_2 S_1 (S_2 S_1)^* + S_2 S_2 (S_1 S_1)^* 
\end{align*}

\medskip
All is left to do now is treat the  case when  either $\tilde{U} = U^2 P_2 + U^{-2} P_1$ or $\tilde{U}= U^2P_1+U^{-2}P_2$, which can be thought of as a mixed case, so to speak.

\medskip
We start with  $\tilde{U} = U^2 P_2 + U^{-2} P_1$. In order to satisfy equation \ref{ext1}, we follow a similar method by merging the two cases above, namely we pick the only suited monomial for $U^2$ starting with $S_2$, that is $S_2 S_2$, thus determining the first two summands and then we pick the only well-suited monomial for $U^{-2}$ starting with $S_1$, that is $S_1 S_1$, determining the remaining two summands. All in all, we obtain
\begin{align*}
& u_{134} = S_2 S_2 (S_2 S_1)^* + S_2 S_1 (S_1 S_1)^* + S_1 S_1 (S_2 S_2)^* + S_1 S_2 (S_1 S_2)^* \\
& u_{123} = S_2 S_2 (S_2 S_2)^* + S_2 S_1 (S_1 S_2)^* + S_1 S_1 (S_2 S_1)^* + S_1 S_2 (S_1 S_1)^*  \ .
\end{align*}
This time, though, it is no longer clear that Equation \ref{ext2} is automatically satisfied. In fact, it is fulfilled only with $u_{123}$, as follows by direct computation.

\medskip
Now $\tilde{U}= U^2P_1+U^{-2}P_2$ can be dealt with in much the same way as above. By repeating the same scheme but picking the well-suited monomials $S_1 S_2$ for $U^2$ and $S_2 S_1$ for $U^{-2}$ we obtain
\begin{align*}
& u_{243}  = S_1 S_2 (S_2 S_1)^* + S_1 S_1 (S_1 S_1)^* + S_2 S_1 (S_2 S_2)^* + S_2 S_2 (S_1 S_2)^*  \\
& u_{142} = S_1 S_2 (S_2 S_2)^* + S_1 S_1 (S_1 S_2)^* + S_2 S_1 (S_2 S_1)^* + S_2 S_2 (S_1 S_1)^* \  ,
\end{align*}
but  only $u_{243}$  also satisfies Equation \ref{ext2}.\\

Note that $\lambda_f \lambda_{u_{123}} = \lambda_{u_{123}}$ and $\lambda_f \lambda_{u_{243}} = \lambda_{u_{243}}$ and therefore $\tilde\lambda_f \tilde\lambda_{u_{123}} = \tilde\lambda_{u_{123}}$ and $\tilde\lambda_f \tilde\lambda_{u_{243}} = \tilde\lambda_{u_{243}}$ , since $\tilde\lambda_f(U^{\pm 2} P_2 + U^{\mp 2} P_1) = U^{\pm 2} P_2 + U^{\mp 2} P_1$. In particular, $\lambda_{u_{123}}$, $\lambda_{u_{243}}$ and their extensions are proper endomorphisms.

\subsection{Case $\CP_2^3$}

The same technique would in fact apply to  endomorphisms $\lambda_u$ coming from a permutation matrix
$u \in \CP_2^3 \subset \CF_2^3$ too. Corresponding to $\tilde{U} = U^4$ or $U^{-4}$, it would  now yield 24 extensions each, which  are all proper endomorphisms of $\CQ_2$, 
cf. \cite[Proposition 6.1]{ACR}, as well as being proper endomorphisms at the level of the Cuntz algebra $\CO_2$ also, cf. \cite{CoSz11}.
Furthermore, for each of the four mixed cases $\tilde{U} = P_1 U^4 + P_2 U^{-4}$, $\tilde{U} = P_2 U^4 + P_1 U^{-4}$, $\tilde{U} = \varphi(P_1) U^4 + \varphi(P_2) U^{-4}$, and 
$\tilde{U} = \varphi(P_2) U^4 + \varphi(P_1) U^{-4}$  the technique would also yield another 4 extensions.
At any rate, we may as well refrain from describing the computations in detail here since the technique will be discussed in fuller generality below, where 
extendible permutative unitaries in $P_2^k$  will be found  aplenty for any $k\in\N$.
A complete list of the endomorphisms thus spotted, however, is provided in the appendix.

\subsection{Case $\CP_2^k$}

Of course, at each level $k$ we will recover those already obtained at lower levels and possibly more. In order to see that indeed we always find new extendible endomorphisms,
it is enough to realize that a similar method applies to $\CP_2^k$, i.e. permutations of $2^k$ objects, at least when $\tilde{U} = U^{2^{k-1}}$ or $\tilde{U} = U^{-2^{k-1}}$,
providing $2^{k-1}!$ different permutative endomorphisms for each case, and thus $2^{k-1}! \cdot 2$ new extendible (proper) endomorphisms. 
Of course, there are also the $2^{k-1}! 2 $ inner perturbations of the identity and of the flip-flop, all of which trivially extend to automorphisms.
Furthermore, there are those of the form 
$\Ad (v) \circ \varphi$ and $\Ad (v) \circ \varphi \circ \lambda_f$, with $v \in \CP_2^{k-1}$, again $2^{k-1}! 2$
(for $k > 2$ they are different from those above).

\begin{lemma}\label{wellsuitedk}
There are $2^{k-1}$ well-suited monomials for $U^{2^{k-1}}$, which are those of length $k$ ending in $S_2$, i.e.  of the form $S_{i_1}S_{i_2}\ldots S_{i_{k-1}}S_2$
with $i_1,i_2,\ldots i_{k-1}\in \{1,2\}$.
Likewise, there are $2^{k-1}$ well-suited monomials for $U^{-2^{k-1}}$, which are those of length $k$ ending in $S_1$, i.e.  of the form $S_{i_1}S_{i_2}\ldots S_{i_{k-1}}S_1$
with $i_1,i_2,\ldots i_{k-1}\in\{1,2\}$.
\end{lemma}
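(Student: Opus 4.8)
The plan is to unravel the defining identity $\tilde U=U^{2^{k-1}}$ on a monomial $S_\gamma$ with $|\gamma|=k$ and read off exactly when the product lands back in $\CO_2$ (i.e.\ becomes a monomial with no leftover power of $U$). The computation rests on the single intertwining rule recorded in the preliminaries, namely $U^{2}S_i=S_i U$ for $i=1,2$ — which follows from $S_2U=U^2S_2$ and $US_1=S_2$ — together with its iterate $U^{2}S_{i_1}\cdots S_{i_{k-1}}=S_{i_1}\cdots S_{i_{k-1}}U$ obtained by induction on the length.

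First I would fix notation: write $\gamma=(i_1,i_2,\ldots,i_k)\in\{1,2\}^k$ and push the power $U^{2^{k-1}}$ past the first $k-1$ isometries. Applying $U^{2}S_i=S_iU$ a total of $k-1$ times (each time halving the exponent) gives
\begin{equation*}
U^{2^{k-1}}S_{i_1}S_{i_2}\cdots S_{i_{k-1}}=S_{i_1}S_{i_2}\cdots S_{i_{k-1}}U^{2^{0}}=S_{i_1}S_{i_2}\cdots S_{i_{k-1}}U .
\end{equation*}
Hence $U^{2^{k-1}}S_\gamma = S_{i_1}\cdots S_{i_{k-1}}\,U S_{i_k}$. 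Now I invoke the two base relations $US_1=S_2$ and $US_2=U^2S_2$: if $i_k=1$ the term is $S_{i_1}\cdots S_{i_{k-1}}S_2$, an honest monomial of $\CO_2$; if $i_k=2$ it is $S_{i_1}\cdots S_{i_{k-1}}U^2S_2$, which still carries a power of $U$ and is therefore not (in general) a monomial of $\CO_2$. By the uniqueness of the canonical form $S_\alpha U^l S_\beta^*$ recalled before Proposition~\ref{unitaryproj}, the residual $U^2$ genuinely cannot be absorbed, so $S_\gamma$ is well-suited for $U^{2^{k-1}}$ precisely when $i_k=1$. This already contradicts the statement as phrased — so I would double-check the sign conventions in the interval/canonical picture; with the paper's convention $U^{2}S_2=\cdots$ the roles of $S_1$ and $S_2$ may be swapped, and I would simply record the relations exactly as in \cite{ACRS} and let the same one-line computation dictate the correct ending index, which the lemma asserts to be $S_2$ for $U^{2^{k-1}}$ and $S_1$ for $U^{-2^{k-1}}$. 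Either way, counting the monomials of length $k$ with the prescribed last letter gives $2^{k-1}$, since the first $k-1$ letters range freely over $\{1,2\}$.

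For the negative power I would argue symmetrically: from $U^{2}S_i=S_iU$ one gets $U^{-2}S_i=S_iU^{-1}$, hence $U^{-2^{k-1}}S_{i_1}\cdots S_{i_{k-1}}=S_{i_1}\cdots S_{i_{k-1}}U^{-1}$, and then $U^{-1}S_{i_k}$ is a monomial exactly for the opposite choice of $i_k$ to the one in the positive case (this is forced by $U^{-1}$ being the inverse of $U$, so it trivializes on the generator that $U$ sends to a generator and not on the other). This flips $S_1\leftrightarrow S_2$ relative to the $U^{2^{k-1}}$ case, yielding the $2^{k-1}$ monomials of length $k$ ending in $S_1$ (resp.\ $S_2$). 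I do not anticipate a serious obstacle here; the only delicate point is matching the paper's orientation conventions so that the asserted ending letters come out correctly, and apart from that the proof is a short induction on the length using the single relation $U^{2}S_i=S_iU$ plus the uniqueness of the $S_\alpha U^l S_\beta^*$ representation.
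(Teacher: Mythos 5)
Your overall strategy is exactly the paper's: push $U^{2^{k-1}}$ through the first $k-1$ isometries by iterating $U^2S_i=S_iU$, reducing everything to the single question of whether $US_{i_k}$ is a monomial of $\CO_2$. The gap is that you misstate the base relations and, as you yourself notice, land on the opposite answer. In this paper the embedding $\CO_2\subset\CQ_2$ is fixed in the preliminaries by $X_1\mapsto US_2$, $X_2\mapsto S_2$, i.e.\ $S_1=US_2$. Hence the correct identities are $US_2=S_1$ (no leftover power of $U$) and $US_1=U^2S_2=S_2U$ (a leftover $U$ which, by the uniqueness of the canonical form $S_\alpha U^lS_\beta^*$, cannot be absorbed). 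Your claimed relations $US_1=S_2$ and $US_2=U^2S_2$ are both false; the second would even force $S_2=US_2$. With the correct relations your one-line computation gives
$U^{2^{k-1}}S_{i_1}\cdots S_{i_{k-1}}S_2=S_{i_1}\cdots S_{i_{k-1}}S_1$ and $U^{2^{k-1}}S_{i_1}\cdots S_{i_{k-1}}S_1=S_{i_1}\cdots S_{i_{k-1}}S_2U$, so the well-suited monomials for $U^{2^{k-1}}$ are precisely those ending in $S_2$, as the lemma asserts; similarly $U^{-1}S_1=S_2$ while $U^{-1}S_2=S_1U^{-1}$, giving the monomials ending in $S_1$ for $U^{-2^{k-1}}$.

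This is not a matter of ``orientation conventions'' to be settled by consulting \cite{ACRS}: the convention is pinned down in the present paper (and is confirmed by the displayed identities $U^2S_1S_2=S_1S_1$ and $U^2S_2S_2=S_2S_1$ in the $\CP_2^2$ subsection), so deferring the sign to an external reference leaves the proof incomplete as written. Once the relations are corrected, your argument --- including the count of the $2^{k-1}$ free initial letters and the appeal to uniqueness of the $S_\alpha U^lS_\beta^*$ form to exclude the non-well-suited monomials, a direction the paper's proof leaves implicit --- goes through and coincides with the paper's.
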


\begin{proof}
Indeed, it is enough to note that  $U^{2^{k-1}} S_{i_1}S_{i_2}\ldots S_{i_{k-1}} S_2= S_{i_1}S_{i_2}\ldots S_{i_{k-1}} S_1$ and
$U^{-2^{k-1}} S_{i_1}S_{i_2}\ldots S_{i_{k-1}} S_1= S_{i_1}S_{i_2}\ldots S_{i_{k-1}} S_2$.
\end{proof}

\begin{thm}
Let $p$ be a permutation of the set $W_2^{k-1}$. Consider the unitaries $u_p^{\pm}\in \CP_2^k$ given by
$$u_p^+=\sum_{\mu\in W_2^{k-1}} (S_\mu S_2)(S_2 S_{p(\mu)})^*+(S_\mu S_1)(S_1S_{p(\mu)})^*=\sum_{\mu\in W_2^{k-1},\,  i \in W_2^1} S_\mu S_i(S_i S_{p(\mu)})^*$$
$$u_p^-=\sum_{\mu\in W_2^{k-1}} (S_\mu S_1)(S_2 S_{p(\mu)})^*+(S_\mu S_2)(S_1S_{p(\mu)})^*$$
Then $\lambda_{u_p^{\pm}}$ both extend as endomorphisms of $\CQ_2$ with $\tilde\lambda_{u_p^{\pm}}(U)=U^{\pm 2^{k-1}}$.
\end{thm}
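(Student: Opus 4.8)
The plan is to verify directly that the two unitaries $u_p^{\pm}$ lie in $\CP_2^k$ and that, with $\tilde U = U^{\pm 2^{k-1}}$ and $\tilde S_i = u_p^{\pm}S_i$, the defining relations \eqref{ext1} and \eqref{ext2} of an extension hold. First I would check that $u_p^{\pm}$ is unitary: this is the statement $\sum_{\mu,i} S_\mu S_i (S_\mu S_i)^* = 1 = \sum_{\mu,i} S_\delta (S_\delta)^*$ where $\delta$ runs over the ''output'' words, which follows because $p$ is a bijection of $W_2^{k-1}$ (so both the source words $\{S_2 S_{p(\mu)}, S_1 S_{p(\mu)}\}$ and the target words $\{S_\mu S_2, S_\mu S_1\}$ enumerate $W_2^k$ exactly once), together with the Cuntz relation $\sum_{|\gamma|=k} S_\gamma S_\gamma^* = 1$. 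In particular $u_p^{\pm}\in\CP_2^k$ and $\lambda_{u_p^{\pm}}$ is a well-defined permutative endomorphism of $\CO_2$.

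Next I would treat \eqref{ext1}, i.e. $\tilde U \tilde S_2 = \tilde S_1$, which after cancelling $u_p^{\pm}$ on the left (it is unitary) and substituting $\tilde S_i = u_p^{\pm}S_i$ becomes $u_p^{\pm *}\,U^{\pm 2^{k-1}}\, u_p^{\pm}\, S_2 = S_1$, equivalently $U^{\pm 2^{k-1}} u_p^{\pm} S_2 = u_p^{\pm} S_1$. For $u_p^+$: from the formula, $u_p^+ S_2 = \sum_\mu (S_\mu S_2)(S_2 S_{p(\mu)})^* S_2 + (S_\mu S_1)(S_1 S_{p(\mu)})^* S_2$; only the first family contributes (since $S_1^* S_2 = 0$) and it collapses, using $S_{p(\mu)}^* S_2 S_2^* \cdots$, to $\sum_\mu S_\mu S_2 (S_{p(\mu)})^* \cdot (\text{something})$ — more carefully, $u_p^+ S_2 = \sum_\mu S_\mu S_2 S_{p(\mu)}^* S_2^* S_2 = \sum_\mu S_\mu S_2 S_{p(\mu)}^*$ is not right dimensionally; the clean way is to observe $u_p^+ S_j = \sum_\mu S_\mu S_j S_{p(\mu)}^*$ using $S_i^* S_j = \delta_{ij}$. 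Then $U^{2^{k-1}} u_p^+ S_2 = \sum_\mu U^{2^{k-1}} S_\mu S_2 S_{p(\mu)}^* = \sum_\mu S_\mu S_1 S_{p(\mu)}^* = u_p^+ S_1$ by Lemma \ref{wellsuitedk}, which is exactly \eqref{ext1}. The computation for $u_p^-$ is symmetric: $u_p^- S_1 = \sum_\mu S_\mu S_2 S_{p(\mu)}^*$ and $u_p^- S_2 = \sum_\mu S_\mu S_1 S_{p(\mu)}^*$, and again Lemma \ref{wellsuitedk} gives $U^{-2^{k-1}} u_p^- S_2 = u_p^- S_1$.

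Finally I would check \eqref{ext2}, $\tilde U \tilde S_1 = \tilde S_2 \tilde U$. As already noted in the $\CP_2^2$ discussion, the key structural fact is that for $\tilde U = U^{\pm 2^{k-1}}$ (the ''pure'' cases, not the mixed ones) one has the intertwining $U^{2^{k-1}} S_r = S_r U^{2^{k-2}}$ for $|r|=1$, or more to the point $\tilde U S_\mu S_i = S_\mu (U^{2^{k-1}} S_i$-type term$)$; combined with $\tilde U = U^{\pm 2^{k-1}}$ commuting past the length-$k$ prefixes in a controlled way, \eqref{ext2} reduces to \eqref{ext1} applied after one application of $\varphi$, or equivalently follows because $\tilde U \tilde S_1 \tilde S_i^* \tilde S_i = \tilde U \tilde S_1$ and the relation $\tilde S_1 = \tilde U \tilde S_2$ already encodes it. Concretely: from \eqref{ext1} we have $\tilde S_1 = \tilde U \tilde S_2$, so $\tilde U \tilde S_1 = \tilde U^2 \tilde S_2$; on the other hand $\tilde S_2 \tilde U = ?$, and one shows $\tilde U^2 \tilde S_2 = \tilde S_2 \tilde U$ by a direct monomial computation using that $U^{\pm 2^{k-1}}$ acting on the relevant length-$(k+1)$ words either produces a monomial or a single power of $U^{\pm 2^{k-2}}$, mirroring the displayed identities $U^2 S_1 S_1 = S_1 S_2 U$ etc. in the $\CP_2^2$ case. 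The main obstacle is precisely this last verification: unlike \eqref{ext1}, relation \eqref{ext2} is not formal and requires tracking how $U^{\pm 2^{k-1}}$ interacts with each of the $2^{k}$ defining monomials of $u_p^{\pm}S_i$; the point to nail down is that in the pure cases $\tilde U \in \{U^{2^{k-1}}, U^{-2^{k-1}}\}$ — as opposed to the mixed cases $\varphi^j(P_1)U^{2^{k-1}} + \varphi^j(P_2)U^{-2^{k-1}}$ where \eqref{ext2} genuinely fails for half the candidates — the power of $U$ is ''uniform'' across both branches, so the intertwining $S_i x = \varphi(x) S_i$ can be applied coherently and \eqref{ext2} follows automatically from \eqref{ext1} exactly as argued for $U^{\pm 2}$ in the $\CP_2^2$ subsection.
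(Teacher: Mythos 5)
Your proposal is correct and follows essentially the same route as the paper: compute $u_p^{\pm}S_j=\sum_{\mu}S_\mu S_{j'}S_{p(\mu)}^*$, verify \eqref{ext1} via the well-suited monomial identities of Lemma \ref{wellsuitedk}, and observe that \eqref{ext2} is automatic for the pure powers $\tilde U=U^{\pm 2^{k-1}}$ because $U^{\pm 2^{k-1}}$ intertwines uniformly past the length-$(k-1)$ words on both sides. Your sketch of the last step (reducing to $\tilde U^2\tilde S_2=\tilde S_2\tilde U$ and checking it monomial by monomial) is in fact slightly more explicit than the paper's one-line assertion, and your unitarity check is a harmless addition.
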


\begin{proof}
We shall only deal with $u_p^+$, for $u_p^-$ can be handled in exactly the same way.
We set $\tilde S_i\doteq u_p^+ S_i$ and $\tilde U\doteq U^{2^{k-1}}$. We need to make sure that
both $\tilde S_1=\tilde U\tilde S_2$ and $\tilde S_2\tilde U=\tilde U\tilde S_1$ hold true. But again, since 
$\tilde U$ is $U^{2^{k-1}}$, the latter is automatically satisfied provided that the former is.
Now on the one hand we have
$$\tilde S_1= \sum_{\mu\in W_2^{k-1}} \big((S_\mu S_2)(S_2 S_{p(\mu)})^*+(S_\mu S_1)(S_1S_{p(\mu)})^*\big)S_1=\sum_{\mu\in W_2^{k-1}}S_\mu S_1 S_{p(\mu)}^* $$
but on the other hand
\begin{align*}
\tilde U\tilde S_2 & =U^{2^{k-1}}u_p^+S_2=U^{2^{k-1}}\sum_{\mu\in W_2^{k-1}} \big((S_\mu S_2)(S_2 S_{p(\mu)})^*+(S_\mu S_1)(S_1S_{p(\mu)})^*\big)S_2\\
&=\sum_{\mu\in W_2^{k-1}} S_\mu US_2S_{p(\mu)}^*=\sum_{\mu\in W_2^k} S_\mu S_1 S_{p(\mu)}^*
\end{align*}
and so the equality is certainly satisfied.
\end{proof}

\begin{remark}
It is worth pointing out that  the equality $u_p^-(k)f=u_p^+(k)$ is satisfied for every integer $k$ and ever permutation $p$.
In general, it is not true that either $u_p^+(k)$ or $u_p^-(k)$ is fixed by $\lambda_f$. However, straightforward computations
show that this is certainly the case for both if the permutation $p$ commutes with $c$, where $c$ is the permutation on $W_2^k$ that swaps 
$1$ and $2$.
\end{remark}

\medskip
\begin{example}
We discuss a simple example.
For every integer $k$, let $F_k$ be the unitary in $\CF_2^{k+1}$ implementing the endomorphism $\varphi^k$.
In particular, $F_1= F$ with $F=\sum_{i,j=1,2}S_iS_jS_i^*S_j^*$.
We will show that if we take $p={\rm id}$ then $u_p^+(k) = F_{k-1}$, i.e. $\lambda_{u_p^+(k)} = \varphi^{k-1}$.
In this case $u_p^+(k)=\sum_{\mu\in W_2^{k-1},\,  i \in W_2^1} S_\mu S_i(S_i S_{\mu})^*$, which for $k=2$ gives
$u_p^+(2)=\sum_{j,i=1,2} S_j S_i S_j^*S_i^*=F= F_1$. 
We next prove by induction that the formula is true for every $k$. Supposing we have proved that $\lambda_{u_p^+(k)} = \varphi^{k-1}$,
we need to show that then $\lambda_{u_p^+(k+1)} = \varphi^{k}$ as well. But now $\varphi^k=\varphi\circ\varphi^{k-1}=\varphi\circ\lambda_{u_p^+(k)}=
\lambda_F\circ\lambda_{u_p^+(k)}=\lambda_{\varphi(u_p^+(k))F}$, so all we have to do is compute $\varphi(u_p^+(k))F$, which we do below:

\begin{align*}
\varphi(u_p^+(k))F&=\sum_{\mu,i,j} S_j S_\mu S_i(S_iS_\mu)^*S_j^*F=\sum_{\mu,i,j} S_j S_\mu S_iS_\mu^* S_i^* S_j^*\sum_{l,m=1,2}S_lS_mS_l^*S_m^*\\
&=\sum_{\mu,i,j} S_jS_\mu S_i S_\mu^*S_j^*S_i^*=\sum_{\mu,i,j } S_j S_\mu S_i(S_iS_jS_\mu)^*=u_p^+(k+1)
\end{align*}

It is now natural to expect 
the equality $\lambda_{u_p^-(k)} = \varphi^{k-1} \circ \lambda_f=
\lambda_f\circ\varphi^{k-1}$ to hold if $p=\rm{id}$.
Indeed, $\lambda_f\circ\varphi^{k}=\lambda_f\circ\lambda_{u_{\rm{id}}^+(k+1)}=\lambda_{\lambda_f(u_{\rm{id}}^+(k+1))f}$.
But $\lambda_f(u_{\rm{id}}^+(k+1))f$ is easily computed as follows
\begin{align*}
\lambda_f(u_{\rm{id}}^+(k+1))f &= \lambda_f\left(\sum_{\mu\in W_2^k}S_\mu S_2(S_2 S_\mu)^*+S_\mu S_1(S_1S_\mu)^*\right)f \\
& =\left(\sum_{\mu\in W_2^k}S_\mu S_1(S_1 S_\mu)^*+S_\mu S_2(S_2S_\mu)^*\right)f\\
&=\left(\sum_{\mu\in W_2^k}S_\mu S_1(S_1 S_\mu)^*+S_\mu S_2(S_2S_\mu)^*\right)\left(S_2S_1^*+S_1S_2^*\right)\\
&=\sum_{\mu\in W_2^k} S_\mu S_1(S_2S_\mu)^*+ S_\mu S_2 (S_1S_\mu)^*= u_{\rm{id}}^-(k+1)
\end{align*}

\end{example}

Of course, for every $p \in \CP_2^{k-1}$, the automorphisms  $\Ad(p)$ and $\Ad(p) \circ \lambda_f$ of $\CO_2$ extend to automorphisms of $\CQ_2$, with $\tilde{U}$ equal to $p U p^*$ and $p U^* p$, respectively. It follows at once by the results in \cite{ACR} that for each $\tilde{U}$ of the above form the $u \in \CP_2^k$ such that $\lambda_u$ extends with $\tilde\lambda_u(U)=\tilde{U}$
is necessarily unique and is given by $p\varphi(p^*)$ and $p\varphi(p^*)f$, respectively.

\medskip

We now deal with the case $\tilde U =P_1U^{2^{k-1}}+P_2U^{-2^{k-1}}$. Again, we can adopt the same strategy as before.
The well-suited monomials for $P_1U^{2^{k-1}}$ are obviously those of the form $S_1S_\alpha S_2$, where $\alpha\in W_2^{k-2}$ is any multi-index
of length $k-2$. In this way we get the following $(2^{k-2})!$ sums

$$\sum_{\alpha\in W_2^{k-2}} S_1S_\alpha S_2(S_2S_1S_{\sigma_1(\alpha)})^*+ S_1 S_\alpha S_1(S_1S_1S_{\sigma_1(\alpha)})^*$$

where $\sigma_1$ is a permutation on the set $W_2^{k-2}$. The well-suited monomials for $P_2U^{-2^{k-1}}$ are those of the form $S_2 S_\beta S_1$, where $\beta$ is any multi-index
of length $k-2$. As above, these give yield the following $(2^{k-2})!$ sums 

$$\sum_{\beta\in W_2^{k-2}} S_2S_\beta S_1(S_2S_2 S_{\sigma_2(\beta)})^*+ S_2 S_\beta S_2(S_1 S_2 S_{\sigma_2(\beta)})^*$$

where $\sigma_2$ is another (possibly different from $\sigma_1$) permutation on the set $W_2^{k-2}$. If we now combine the two sums, we finally obtain 
$(2^{k-2}!)^2$ unitaries in the Cuntz algebra given by

\begin{align*}
u_{\sigma_1,\sigma_2}\doteq&\sum_{\alpha, \beta\in W_2^{k-2}}S_1S_\alpha S_2(S_2S_1S_{\sigma_1(\alpha)})^*+ S_1 S_\alpha S_1(S_1S_1S_{\sigma_1(\alpha)})^*+\\
& \qquad\quad + S_2S_\beta S_1(S_2S_2 S_{\sigma_2(\beta)})^*+ S_2 S_\beta S_2(S_1 S_2 S_{\sigma_2(\beta)})^*
\end{align*}

\medskip

The case $\tilde U=P_1U^{-2^{k-1}}+P_2U^{2^{k-1}}$ is dealt with in pretty much the same way, and we find the following
$(2^{k-2}!)^2$ unitaries

\begin{align*}
u_{\sigma_1,\sigma_2}\doteq&\sum_{\alpha, \beta\in W_2^{k-2}}S_1S_\alpha S_1(S_2S_1S_{\sigma_1(\alpha)})^*+ S_1 S_\alpha S_2(S_1S_1S_{\sigma_1(\alpha)})^*+\\
&  \qquad\quad + S_2S_\beta S_2(S_2S_2 S_{\sigma_2(\beta)})^*+ S_2 S_\beta S_1(S_1 S_2 S_{\sigma_2(\beta)})^*
\end{align*}

We now come to the case $\tilde U=\varphi^h(P_1)U^{2^{k-1}}+\varphi^h(P_2)U^{-2^{k-1}}$, with
$h=1,2,\ldots k-2$ (the case $k=0$ has been adressed above). Since $\varphi(P_i)=\sum_{\alpha\in W_2^h} S_\alpha S_iS_i^*S_\alpha^*=\sum_{\alpha\in W_2^h} S_{\alpha i} S_{\alpha i}^*$,
the well-suited monomials for $\varphi^h(P_1)U^{2^{k-1}}$ are those of the form $S_{\alpha 1\beta 2}$ and 
the well-suited monomials for $\varphi^h(P_2)U^{-2^{k-1}}$ are those of the form $S_{\alpha 2 \beta 1}$, where $\beta$ is any multi-index of length $k-h-2$.
Bringing the pieces together, we finally get the following $N_{k,h}\doteq (2^{k-h-2}! 2^h)^2$ unitaries
$$
u_\sigma \doteq \sum_{\substack{ \alpha_1, \alpha_2\in W_2^h\; \\ \beta,\gamma\in W_2^{k-h-2}}}  S_{\alpha_1 1\beta 2} S_{2 \alpha_1 1 \sigma_{\alpha_1}(\beta)}^*+
S_{\alpha_1 1 \beta 1} S_{1\alpha_1 1\sigma_{\alpha_1}(\beta)}^*+S_{\alpha_2 2 \gamma 1}S_{2\alpha_2 2 \sigma_{\alpha_2}(\gamma)}^*+
S_{\alpha_2 2 \gamma 2}S_{1\alpha_ 2 2 \sigma_{\alpha_2}(\gamma)}^*
$$
where $\sigma$ is the set $\{\sigma_{\alpha_1}, \sigma_{\alpha_ 2}: \alpha_1,\alpha_2 \in W_2^h\}$, with $\sigma_{\alpha_i}$ being
a permutation of the set $W_2^{k-h-2}$ for each $\alpha$ and $i=1,2$. There follow the necessary computations to make sure
that the endomorphisms $\lambda_{u_\sigma}\in\End(\CO_2)$ actually extend to $\CQ_2$.

$$
\tilde S_1 = u_\sigma S_1=  \sum_{\substack{ \alpha_1, \alpha_2 \in W_2^h\; \\ \beta,\gamma\in W_2^{k-h-2}}} S_{\alpha_1 1\beta 1} S_{\alpha_1 1 \sigma_{\alpha_1}(\beta)}^*
+S_{\alpha_2 2\gamma 2}S_{\alpha_2 2\sigma_{\alpha_2}(\gamma)}^* 
$$

$$
\tilde S_2 = u_\sigma S_2= \sum_{\substack{ \alpha_1, \alpha_2\in W_2^h\; \\ \beta,\gamma\in W_2^{k-h-2}}} S_{\alpha_1 1\beta 2} S_{\alpha_1 1 \sigma_{\alpha_1}(\beta)}^* +S_{\alpha_2 2\gamma 1}S_{\alpha_2 2\sigma_{\alpha_2}(\gamma)}^* 
$$

\begin{align*}
\tilde U \tilde S_2 & = (U^{2^{k-1}}\varphi^h(P_1)+U^{-2^{k-1}}\varphi^h(P_2)) \left(\sum_{\substack{ \alpha_1, \alpha_2\in W_2^h\; \\ \beta,\gamma\in W_2^{k-h-2}}} S_{\alpha_1 1\beta 2} S_{\alpha_1 1 \sigma_{\alpha_1}(\beta)}^* +S_{\alpha_2 2\gamma 1}S_{\alpha_2 2\sigma_{\alpha_2}(\gamma)}^* \right)\\
& = \sum_{\substack{ \alpha_1,\alpha_2\in W_2^h\; \\ \beta,\gamma\in W_2^{k-h-2}}} U^{2^{k-1}} S_{\alpha_1 1\beta 2} S_{\alpha_1 1 \sigma_{\alpha_1}(\beta)}^*+ U^{-2^{k-1}}S_{\alpha_2 2\gamma 1}S_{\alpha_2 2\sigma_{\alpha_2}(\gamma)}^* \\
& =  \sum_{\substack{ \alpha_1,\alpha_2 \in W_2^h\; \\ \beta,\gamma\in W_2^{k-h-2}}} S_{\alpha_1 1\beta 1} S_{\alpha_1 1 \sigma_{\alpha_1}(\beta)}^*+S_{\alpha_2 2\gamma 2}S_{\alpha_2 2\sigma_{\alpha_2}(\gamma)}^* =\tilde S_1
\end{align*}

\begin{align*}
\tilde U \tilde S_1 & = (U^{2^{k-1}}\varphi^h(P_1)+U^{-2^{k-1}}\varphi^h(P_2)) \left(\sum_{\substack{ \alpha_1, \alpha_2 \in W_2^h\; \\ \beta,\gamma\in W_2^{k-h-2}}} S_{\alpha_1 1\beta 1} S_{\alpha_1 1 \sigma_{\alpha_1}(\beta)}^* +S_{\alpha_2 2\gamma 2}S_{\alpha_2 2\sigma_{\alpha_2}(\gamma)}^* \right)\\
& = \sum_{\substack{ \alpha_1,\alpha_2\in W_2^h\; \\ \beta,\gamma\in W_2^{k-h-2}}} U^{2^{k-1}} S_{\alpha_1 1\beta 1} S_{\alpha_1 1 \sigma_{\alpha_1}(\beta)}^*+U^{-2^{k-1}}S_{\alpha_2 2\gamma 2}S_{\alpha_2 2\sigma_{\alpha_2}(\gamma)}^*\\
& = \sum_{\substack{ \alpha_1,\alpha_2\in W_2^h\; \\ \beta,\gamma\in W_2^{k-h-2}}} S_{\alpha_11\beta} US_1 S_{\alpha_1 1 \sigma_{\alpha_1}(\beta)}^*+S_{\alpha_2 2\gamma} U^* S_2 S_{\alpha_2 2\sigma_{\alpha_2}(\gamma)}^*\\
\end{align*}

\begin{align*}
\tilde S_2 \tilde U  & = \left( \sum_{\substack{ \alpha_1, \alpha_2\in W_2^h\; \\ \beta,\gamma\in W_2^{k-h-2}}} S_{\alpha_1 1\beta 2} S_{\alpha_1 1 \sigma_{\alpha_1}(\beta)}^* +S_{\alpha_2 2\gamma 1}S_{\alpha_2 2\sigma_{\alpha_2}(\gamma)}^*  \right) (U^{2^{k-1}}\varphi^h(P_1)+U^{-2^{k-1}}\varphi^h(P_2)) \\
 & = \sum_{\substack{ \alpha_1,\alpha_2\in W_2^h\; \\ \beta,\gamma\in W_2^{k-h-2}}} S_{\alpha_1 1\beta 2} S_{\alpha_1 1 \sigma_{\alpha_1}(\beta)}^*U^{2^{k-1}}+S_{\alpha_2 2\gamma 1}S_{\alpha_2 2\sigma_{\alpha_2}(\gamma)}^*  U^{-2^{k-1}} \\ 
 & = \sum_{\substack{ \alpha_1,\alpha_2\in W_2^h\; \\ \beta,\gamma\in W_2^{k-h-2}}} S_{\alpha_1 1\beta 2} U S_{\alpha_1 1 \sigma_{\alpha_1}(\beta)}^*+S_{\alpha_2 2\gamma 1}U^*S_{\alpha_2 2\sigma_{\alpha_2}(\gamma)}^*   \\
\end{align*}

\bigskip

The case $\tilde U=\varphi^h(P_2)U^{2^{k-1}}+\varphi(P_1)^hU^{-2^{k-1}}$ can be dealt with in pretty much the same way.
The formula thus got to reads as follows:

$$
u_\sigma \doteq \sum_{\substack{ \alpha_1,\alpha_2\in W_2^h\; \\ \beta,\gamma\in W_2^{k-h-2}}}  S_{\alpha_1 2\beta 2} S_{2 \alpha_1 2 \sigma_{\alpha_1}(\beta)}^*+
S_{\alpha_1 2 \beta 1} S_{1\alpha_1 2\sigma_{\alpha_1}(\beta)}^*+S_{\alpha_2 1\gamma 1}S_{2\alpha_2 1\sigma_{\alpha_2}(\gamma)}^*+
S_{\alpha_2 1 \gamma 2}S_{1\alpha_2 1\sigma_{\alpha_2}(\gamma)}^*
$$
where $\sigma$ is the set $\{\sigma_{\alpha_1}, \sigma_{\alpha_ 2}: \alpha_1,\alpha_2 \in W_2^h\}$, with $\sigma_{\alpha_i}$ being
a permutation of the set $W_2^{k-h-2}$ for each $\alpha_i$ and $i=1,2$.
In particular, we still have $N_{k,h}\doteq (2^{k-h-2}! 2^h)^2$ extendible endomorphisms.
For the reader's convenience, we can finally sum up our findings in the following statement. 

\begin{thm}\label{listun}
Given a natural number $k\geq 2$, for each $h\in\{0,1,2,\ldots, k-2\}$ let $N_{k,h}$ be $(2^{k-h-2}! 2^h)^2$.

\medskip
First, there are $N_{k,h}$ unitaries $u_\sigma^{(1)}$ in  $\CP_2^k$ such that the associated endomorphism $\lambda_{u_\sigma^{(1)}}$  is extendible  
with  $\widetilde{\lambda_{u_\sigma^{(1)}}}(U)=\varphi^h(P_1)U^{2^{k-1}}+\varphi^h(P_2)U^{-2^{k-1}}$, and these are given by 
$$
u_\sigma^{(1)} \doteq  \sum_{\substack{ \alpha_1, \alpha_2\in W_2^h\; \\ \beta,\gamma\in W_2^{k-h-2}}}  S_{\alpha_1 1\beta 2} S_{2 \alpha_1 1 \sigma_{\alpha_1}(\beta)}^*+
S_{\alpha_1 1 \beta 1} S_{1\alpha_1 1\sigma_{\alpha_1}(\beta)}^*+S_{\alpha_2 2 \gamma 1}S_{2\alpha_2 2 \sigma_{\alpha_2}(\gamma)}^*+
S_{\alpha_2 2 \gamma 2}S_{1\alpha_ 2 2 \sigma_{\alpha_2}(\gamma)}^*$$
where, for any $\alpha_1, \alpha_2\in W_2^h$, $\sigma_{\alpha_1}$ and $\sigma_{\alpha_2}$  are permutations on  $W_2^{k-h-2}$.

\medskip
Second, there are $N_{k,h}$ unitaries $u_\sigma^{(2)}$ in  $\CP_2^k$ such that the associated endomorphism $\lambda_{u_\sigma^{(2)}}$  is extendible  
with  $\tilde{U}=\varphi^h(P_2)U^{2^{k-1}}+\varphi^h(P_1)U^{-2^{k-1}}$, and these are given by 

$$
u_\sigma^{(2)} \doteq
\sum_{\substack{ \alpha_1,\alpha_2\in W_2^h\; \\ \beta,\gamma\in W_2^{k-h-2}}}  S_{\alpha_1 2\beta 2} S_{2 \alpha_1 2 \sigma_{\alpha_1}(\beta)}^*+
S_{\alpha_1 2 \beta 1} S_{1\alpha_1 2\sigma_{\alpha_1}(\beta)}^*+S_{\alpha_2 1\gamma 1}S_{2\alpha_2 1\sigma_{\alpha_2}(\gamma)}^*+
S_{\alpha_2 1 \gamma 2}S_{1\alpha_2 1\sigma_{\alpha_2}(\gamma)}^*
$$

with the same notation as in the first case.

\end{thm}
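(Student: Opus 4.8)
The statement collects the constructions of this subsection, so the plan is to verify, for the explicitly displayed family $u_\sigma^{(1)}$ (the family $u_\sigma^{(2)}$ being handled identically, after exchanging the roles of $1$ and $2$ in the distinguished slot), the two things we need: that $u_\sigma^{(1)}$ is a unitary in $\CP_2^k$, and that, setting $\tilde S_i\doteq u_\sigma^{(1)}S_i$ and $\tilde U\doteq\varphi^h(P_1)U^{2^{k-1}}+\varphi^h(P_2)U^{-2^{k-1}}$, the relations \eqref{ext1} and \eqref{ext2} hold; by the extendibility criterion recalled at the start of the section this is exactly the assertion that $\lambda_{u_\sigma^{(1)}}$ extends with $\widetilde{\lambda_{u_\sigma^{(1)}}}(U)=\tilde U$. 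The cardinality claim is then formal: the free datum is the collection $\sigma=\{\sigma_{\alpha_1},\sigma_{\alpha_2}:\alpha_1,\alpha_2\in W_2^h\}$ of permutations of $W_2^{k-h-2}$, distinct $\sigma$ give distinct $u_\sigma^{(1)}$ (the permutation $\sigma_{\alpha}$ is recovered from which source word is paired with the range word $\alpha 1\beta 1$, resp. $\alpha 2\gamma 2$), and counting the admissible $\sigma$ gives the stated number $N_{k,h}$.

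I would first dispose of membership in $\CP_2^k$ via Proposition \ref{unitaryproj}: one only has to check that the range multi-indices and the source multi-indices each exhaust $W_2^k$ without repetition. The range words $\alpha_1 1\beta j$ and $\alpha_2 2\gamma j$, $j\in\{1,2\}$, are precisely the length-$k$ words sorted according to whether their $(h+1)$-st letter is $1$ or $2$; and since each $\sigma_{\alpha_i}$ is a bijection of $W_2^{k-h-2}$, the source words $i\,\alpha_1 1\,\sigma_{\alpha_1}(\beta)$ and $i\,\alpha_2 2\,\sigma_{\alpha_2}(\gamma)$, $i\in\{1,2\}$, run over all length-$k$ words sorted according to their first and $(h+2)$-nd letters. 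Hence both the range and the source projections sum to $1$.

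For the relations the only ingredients are the elementary identities in $\CQ_2$: $U^2S_i=S_iU$ for $i=1,2$ (whence $U^{\pm2^{k-1}}S_\mu=S_\mu U^{\pm2^{k-1-|\mu|}}$, and likewise $S_\mu^*U^{\pm2^{k-1}}=U^{\pm2^{k-1-|\mu|}}S_\mu^*$, as long as $|\mu|\le k-1$), together with $US_2=S_1$, $US_1=S_2U$, $U^{-1}S_1=S_2$ and $U^{-1}S_2=S_1U^{-1}$; the formula $\varphi^h(P_i)=\sum_{\alpha\in W_2^h}S_{\alpha i}S_{\alpha i}^*$, from which $\varphi^h(P_1)$ fixes a monomial $S_\mu$ (resp. $S_\mu^*$) when the $(h+1)$-st letter of $\mu$ is $1$ and annihilates it when that letter is $2$, and symmetrically for $\varphi^h(P_2)$; and the orthogonality rule that $S_w^*S_v$ is a possibly zero monomial, nonzero only when one of $w,v$ is an initial segment of the other. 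Computing $\tilde S_1=u_\sigma^{(1)}S_1$ and $\tilde S_2=u_\sigma^{(1)}S_2$ one keeps, in each of the four families of summands, exactly the terms whose source word begins with $1$, resp. $2$, which yields the two expressions written just before the statement. Then, for \eqref{ext1}, one applies $\tilde U$ to $\tilde S_2$ summand by summand: $U^{2^{k-1}}$ carries $S_{\alpha_1 1\beta 2}$ to $S_{\alpha_1 1\beta 1}$, which $\varphi^h(P_1)$ leaves fixed and $\varphi^h(P_2)U^{-2^{k-1}}$ annihilates, while symmetrically $U^{-2^{k-1}}$ carries $S_{\alpha_2 2\gamma 1}$ to $S_{\alpha_2 2\gamma 2}$; summing reproduces $\tilde S_1$. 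For \eqref{ext2} one computes both $\tilde U\tilde S_1$ and $\tilde S_2\tilde U$ and, after the same bookkeeping, sees that each reduces term by term to $\sum S_{\alpha_1 1\beta 2}U S_{\alpha_1 1\sigma_{\alpha_1}(\beta)}^*+\sum S_{\alpha_2 2\gamma 1}U^{-1}S_{\alpha_2 2\sigma_{\alpha_2}(\gamma)}^*$, the matching on the $\tilde U\tilde S_1$ side being produced by $US_1=S_2U$ and $U^{-1}S_2=S_1U^{-1}$.

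The case of $u_\sigma^{(2)}$ requires nothing new: flipping the $(h+1)$-st letter of every range word and the $(h+2)$-nd letter of every source word interchanges the two spectral projections $\varphi^h(P_1)\leftrightarrow\varphi^h(P_2)$, hence the roles of $U^{2^{k-1}}$ and $U^{-2^{k-1}}$, and the verification goes through verbatim. The one step that genuinely needs attention is \eqref{ext2}: in contrast with the pure cases $\tilde U=U^{\pm2^{k-1}}$, where \eqref{ext2} follows formally from \eqref{ext1}, here it holds only because the pairing of the well-suited monomials of Lemma \ref{wellsuitedk} with their partners has been arranged precisely so that the two sums computed above coincide — this being the higher-level analogue of the phenomenon already met at level $2$, where of the two natural candidates for each mixed $\tilde U$ only one also satisfied \eqref{ext2}.
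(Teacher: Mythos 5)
Your proposal is correct and follows essentially the same route as the paper: the paper's proof of Theorem \ref{listun} consists precisely of the computations of $\tilde S_1$, $\tilde S_2$, $\tilde U\tilde S_2$, $\tilde U\tilde S_1$ and $\tilde S_2\tilde U$ displayed just before the statement, i.e.\ a direct term-by-term verification of \eqref{ext1} and \eqref{ext2} using $US_2=S_1$, $US_1=S_2U$ and $U^{\pm 2^{k-1}}S_\mu=S_\mu U^{\pm 1}$ for $|\mu|=k-1$, with unitarity and the count of the $\sigma$'s left as bookkeeping. Your added remarks -- that membership in $\CP_2^k$ follows from the range and source words each exhausting $W_2^k$, and that \eqref{ext2} genuinely requires a separate check in the mixed cases -- are exactly the points the paper treats implicitly or by appeal to the level-$2$ discussion.
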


\medskip
\begin{remark}
It goes without saying that there might be more possibilities for $\tilde{U}$, other than those considered above, that still give rise to extendible endomorphisms. In addition, already with the values of $\tilde{U}$ we have considered, there is no evidence that the unitaries listed in Theorem \ref{listun} actually exhaust all possible cases.   
\end{remark}

\begin{remark}
All of the endomorphisms we have produced  so far are clearly endomorphisms of the Bunce-Deddens algebra $\CQ_2^{\IT}$ too.  
\end{remark}

\appendix
\section{Permutative endomorphisms of $\CQ_2$ at level  $3$}

The notation we adopt here is similar to that already used in \cite{ACR3}, where
monomials $S_\alpha S_\beta^*$ are denoted by $S_{\alpha_1,\alpha_2,\ldots, \alpha_{|\alpha|}, \beta_1,\beta_2,\ldots, \beta_{|\beta|}}$ for any $\alpha,\beta\in W_2$. There follows a table where the permutative unitaries in $\CP_2^k$ yielded
by the general method described in the previous section are shown along with the action of the corresponding endomorphisms on $U$.  

{\footnotesize
\[
\begin{array}{cc}
\\
\hline u\in \CP_2^3 & \widetilde{\lambda_u}(U)  \\
\hline
S_{112,211} + S_{111,111} + S_{122,212} + S_{121,112}+ S_{212,221} + S_{211,121} + S_{222,222}+ S_{221,122}  &  U^4 \\
S_{112,211}+ S_{111,111}+ S_{122,212} + S_{121,112} + S_{212, 221}+ S_{211,121} + S_{222,221} + S_{221,121} &  U^4 \\
S_{112,211} + S_{111,111}+ S_{122,212} + S_{121,112} + S_{212,212}+ S_{211,112}+ S_{222,222}+ S_{221,122} &  U^4 \\
S_{112,211}+ S_{111,111} + S_{122,212}+ S_{121,112}+S_{212,212}+S_{211,112}+S_{222,212}+ S_{221,112}  &  U^4 \\
S_{112,211}+S_{111,111}+S_{122,222}+S_{121,122}+S_{212,212}+S_{211,112}+S_{222,221}+S_{221,121} &  U^4 \\
S_{112,211}+S_{111,111}+S_{122,222}+S_{121,122}+S_{212,221}+S_{211,121}+S_{222,212}+S_{221,112} &  U^4 \\
S_{112,212}+S_{111,112}+S_{122,211}+S_{121,111}+S_{212,221}+S_{211,121}+S_{222,222}+S_{221,122} &  U^4 \\
S_{112,212}+S_{111,112}+S_{122,211}+S_{121,111}+S_{212,222}+S_{211,122}+S_{222,221}+S_{221,121} &  U^4 \\
S_{112,212}+S_{111,112}+S_{122,211}+S_{121,121}+S_{212,211}+S_{211,111}+S_{222,222}+S_{221,122} &  U^4 \\
S_{112,212}+S_{111,112}+S_{122,211}+S_{121,121}+S_{212,222}+S_{211,122}+S_{222,211}+S_{221,111} &  U^4 \\
S_{112,212}+S_{111,112}+S_{122,222}+S_{121,122}+S_{212,211}+S_{211,111}+S_{222,221}+S_{221,121} &  U^4 \\
S_{112,212}+S_{111,112}+S_{122,222}+S_{121,122}+S_{212,221}+S_{211,121}+S_{222,211}+S_{221,111} &  U^4 \\
S_{112,221}+S_{111,121}+S_{122,211}+S_{121,111}+S_{212,212}+S_{211,112}+S_{222,222}+S_{221,122} &  U^4 \\
S_{112,221}+S_{111,121}+S_{122,211}+S_{121,111}+S_{212,222}+S_{211,122}+S_{222,212}+S_{221,112} &  U^4 \\
S_{112,221}+S_{111,121}+S_{122,212}+S_{121,112}+S_{212,211}+S_{211,111}+S_{222,222}+S_{221,122} &  U^4 \\
S_{112,221}+S_{111,121}+S_{122,212}+S_{121,112}+S_{212,222}+S_{211,122}+S_{222,211}+S_{221,111} &  U^4 \\
S_{112,221}+S_{111,121}+S_{122,222}+S_{121,122}+S_{212,211}+S_{211,111}+S_{222,212}+S_{221,112} &  U^4 \\
S_{112,221}+S_{111,121}+S_{122,222}+S_{121,122}+S_{212,212}+S_{211,112}+S_{222,211}+S_{221,111} &  U^4 \\
S_{112,222}+S_{111,122}+S_{122,211}+S_{121,111}+S_{212,212}+S_{211,112}+S_{222,221}+S_{221,121} &  U^4 \\
S_{112,222}+S_{111,122}+S_{122,211}+S_{121,111}+S_{212,221}+S_{211,121}+S_{222,212}+S_{221,112} &  U^4 \\
S_{112,222}+S_{111,122}+S_{122,212}+S_{121,112}+S_{212,211}+S_{211,111}+S_{222,221}+S_{221,121} &  U^4 \\
S_{112,222}+S_{111,122}+S_{122,212}+S_{121,112}+S_{212,221}+S_{211,121}+S_{222,211}+S_{221,111} &  U^4 \\
S_{112,222}+S_{111,122}+S_{122,221}+S_{121,121}+S_{212,211}+S_{211,111}+S_{222,212}+S_{221,112} &  U^4 \\
S_{112,222}+S_{111,122}+S_{122,221}+S_{121,121}+S_{212,212}+S_{211,112}+S_{222,211}+S_{221,111} &  U^4 \\
S_{112,211}+S_{111,111}+S_{122,212}+S_{121,112}+S_{211,221}+S_{212,121}+S_{221,222}+S_{222,122} &  P_1U^4+P_2U^{-4} \\
S_{112,211}+S_{111,111}+S_{122,212}+S_{121,112}+S_{211,222}+S_{212,122}+S_{221,221}+S_{222,121} &  P_1U^4+P_2U^{-4} \\
S_{112,212}+S_{111,112}+S_{122,211}+S_{121,111}+S_{211,221}+S_{212,121}+S_{221,222}+S_{222,122} & P_1U^4+P_2U^{-4} \\
S_{112,212}+S_{111,112}+S_{122,211}+S_{121,111}+S_{211,222}+S_{212,122}+S_{221,221}+S_{222,121} & P_1U^4+P_2U^{-4} \\
S_{212,221}+S_{211,121}+S_{222,222}+S_{221,122}+S_{111,211}+S_{112,111}+S_{121,212}+S_{122,112} & P_1U^{-4}+P_2U^{4} \\
S_{212,221}+S_{211,121}+S_{222,222}+S_{221,122}+S_{111,212}+S_{112,112}+S_{121,211}+S_{122,111} & P_1U^{-4}+P_2U^{4} \\
S_{212,222}+S_{211,122}+S_{222,221}+S_{221,121}+S_{111,211}+S_{112,111}+S_{121,212}+S_{122,112} & P_1U^{-4}+P_2U^{4} \\
S_{212,222}+S_{211,122}+S_{222,221}+S_{221,121}+S_{111,212}+S_{112,112}+S_{121,211}+S_{122,111} & P_1U^{-4}+P_2U^{4} \\
S_{112,211}+S_{111,111}+S_{212,221}+S_{211,121}+S_{121,212}+S_{122,112}+S_{221,222}+S_{222,122} & \varphi(P_1)U^{4}+\varphi(P_2)U^{-4} \\
S_{112,211}+S_{111,111}+S_{212,221}+S_{211,121}+S_{121,222}+S_{122,122}+S_{221,212}+S_{222,112} & \varphi(P_1)U^{4}+\varphi(P_2)U^{-4} \\
S_{112,221}+S_{111,121}+S_{212,211}+S_{211,111}+S_{121,212}+S_{122,112}+S_{221,222}+S_{222,122} & \varphi(P_1)U^{4}+\varphi(P_2)U^{-4} \\
S_{112,221}+S_{111,121}+S_{212,211}+S_{211,111}+S_{121,222}+S_{122,122}+S_{221,212}+S_{222,112} & \varphi(P_1)U^{4}+\varphi(P_2)U^{-4} \\
S_{122,212}+S_{121,112}+S_{222,222}+S_{221,122}+S_{111,211}+S_{112,111}+S_{211,221}+S_{212,121} & \varphi(P_1)U^{-4}+\varphi(P_2)U^{4} \\
S_{122,212}+S_{121,112}+S_{222,222}+S_{221,122}+S_{111,221}+S_{112,121}+S_{211,211}+S_{212,111} & \varphi(P_1)U^{-4}+\varphi(P_2)U^{4} \\
S_{122,222}+S_{121,122}+S_{222,212}+S_{221,112}+S_{111,211}+S_{112,111}+S_{211,221}+S_{212,121} & \varphi(P_1)U^{-4}+\varphi(P_2)U^{4} \\
S_{122,222}+S_{121,122}+S_{222,212}+S_{221,112}+S_{111,221}+S_{112,121}+S_{211,211}+S_{212,111} & \varphi(P_1)U^{-4}+\varphi(P_2)U^{4} \\
\hline
\end{array}
\]
}

\section*{References}
\begin{biblist}
\bibselect{bib}
\end{biblist}

\end{document}